\providecommand\@dotsep{5}
\def\listtodoname{List of Todos}
\def\listoftodos{\@starttoc{tdo}\listtodoname}
\numberwithin{equation}{section}
\newcommand{\h}{\mathcal{H}_{\e}}
\newcommand{\R}{\mathbb{R}}
\newcommand{\2}{2^{*}_{s}}
\newcommand{\C}{\mathbb{C}}
\newcommand{\ri}{\rightarrow}
\newcommand{\N}{\mathcal{N}}
\newcommand{\M}{\mathcal{M}}
\newcommand{\La}{\Lambda}
\DeclareMathOperator{\dive}{div}
\DeclareMathOperator{\supp}{supp}
\DeclareMathOperator{\e}{\varepsilon}
\newtheorem{prop}{Proposition}[section]
\newtheorem{lem}{Lemma}[section]
\newtheorem{thm}{Theorem}[section]
\newtheorem{cor}{Corollary}[section]
\newtheorem{remark}{Remark}[section]
\keywords{Fractional magnetic operators;  Variational methods; Ljusternik-Schnirelman theory}
\subjclass[2010]{35A15, 35R11, 35S05, 58E05.}
\date{}
\begin{document}
\title[Concentration phenomena for fractional magnetic NLS equations]
{Concentration phenomena for fractional \\
magnetic NLS equations}

\author[V. Ambrosio]{Vincenzo Ambrosio}
\address{Vincenzo Ambrosio\hfill\break\indent 
Dipartimento di Ingegneria Industriale e Scienze Matematiche \hfill\break\indent
Universit\`a Politecnica delle Marche\hfill\break\indent
Via Brecce Bianche, 12\hfill\break\indent
60131 Ancona (Italy)}
\email{v.ambrosio@staff.univpm.it}

\begin{abstract}
We study the multiplicity and concentration of complex valued solutions for a fractional magnetic Schr\"odinger equation involving a scalar continuous  electric potential
satisfying a local condition and a continuous nonlinearity with subcritical growth. The main results are obtained by applying a penalization technique, generalized Nehari manifold method and Ljusternik-Schnirelman theory. We also prove a Kato's inequality for the fractional magnetic Laplacian which we believe to be useful in the study of other fractional magnetic problems.
\end{abstract}

\maketitle

\section{introduction}

\noindent
In this paper we deal with the multiplicity of solutions $u:\R^{N}\ri \C$ to the following fractional magnetic nonlinear Schr\"odinger equation:
\begin{equation}\label{P}
\e^{2s}(-\Delta)_{A/\varepsilon}^{s}u+V(x)u=f(|u|^{2})u \quad \mbox{ in } \mathbb{R}^{N},
\end{equation}
where $\e>0$ is a small parameter, $N\geq 3$, $s\in (0, 1)$ and $A: \R^{N}\ri \R^{N}$ is a H\"older continuous magnetic potential with exponent $\alpha\in(0,1]$.
Along the paper, we assume that $V: \R^{N}\rightarrow \R$ is a continuous electric potential satisfying the following del Pino and Felmer \cite{DF} type assumptions:
\begin{compactenum}[$(V_1)$]
\item there exists $V_{0}>0$ such that $V_{0}=\inf_{x\in \R^{N}} V(x)$;
\item  there exists a bounded open set $\Lambda\subset \R^{N}$ such that
\begin{equation}
V_{0}<\min_{\partial \Lambda} V \quad \mbox{ and } \quad M=\{x\in \Lambda: V(x)=V_{0}\}\neq \emptyset. 
\end{equation}
\end{compactenum} 
Without loss of generality, we may assume that $0\in M$. 
Moreover, we suppose that the nonlinearity $f:\R\rightarrow \R$ is a  continuous function fulfilling the following conditions:
\begin{compactenum}[$(f_1)$]
\item $f(t)=0$ for $t\leq 0$; 
\item there exists $q\in (2, 2^{*}_{s})$, where $2^{*}_{s}= \frac{2N}{N-2s}$ is the fractional critical exponent, such that 
$$
\lim_{t\rightarrow \infty} \frac{f(t)}{t^{\frac{q-2}{2}}}=0;
$$
\item there exists $\theta\in (2, \2)$ such that $0<\frac{\theta}{2} F(t)\leq t f(t)$ for any $t>0$, where $F(t)=\int_{0}^{t} f(\tau)d\tau$;
\item the function $t\mapsto f(t)$ is increasing in $(0, \infty)$.
\end{compactenum} 
Here $(-\Delta)^{s}_{A}$ is the fractional magnetic Schr\"odinger operator which is defined for any $u\in C^{\infty}_{c}(\R^{N}, \C)$ as
\begin{equation*}
(-\Delta)^{s}_{A}u(x)=C(N,s) \lim_{r\rightarrow 0} \int_{B_{r}^{c}(x)} \frac{u(x)-e^{\imath (x-y)\cdot A(\frac{x+y}{2})} u(y)}{|x-y|^{N+2s}} \,dy,
\quad
C(N,s)=s\frac{4^{s}\Gamma\left(\frac{N+2s}{2}\right)}{\pi^{N/2}\Gamma(1-s)}.
\end{equation*}
This operator has been recently introduced in \cite{DS, I10} and relies essentially on the L\'evy-Khintchine formula for the generator of a general L\'evy process. We would like to observe that, when $s=\frac{1}{2}$, 
the above operator takes inspiration from the definition of a quantized operator corresponding to the classical relativistic Hamiltonian symbol for a relativistic particle of mass $m\geq 0$, that is
$$
\sqrt{(\xi-A(x))^{2}+m^{2}}+V(x), \quad (\xi, x)\in \mathbb{R}^{N}\times \mathbb{R}^{N},
$$
which is the sum of the kinetic energy term involving the magnetic vector potential $A(x)$ and the potential energy term given by the electric scalar potential $V(x)$. 
For the sake of completeness, we emphasize that in the literature other three kinds of quantum relativistic Hamiltonians appear depending on how the kinetic energy term $\sqrt{(\xi-A(x))^{2}+m^{2}}$ is quantized.
As explained in \cite{I10}, these three nonlocal operators are in general different from each other, but coincide when the vector potential $A$ is assumed to be linear.
As $s\rightarrow 1$ and $A$ sufficiently smooth, $(-\Delta)^{s}_{A}$ can be also considered (see \cite{SV}) as  the fractional analogue of the magnetic Laplacian
$$
-\Delta_{A} u=\left(\frac{1}{\imath}\nabla-A(x)\right)^{2}u= -\Delta u -\frac{2}{\imath} A(x) \cdot \nabla u + |A(x)|^{2} u -\frac{1}{\imath} u \dive(A(x)),
$$ 
which plays a fundamental role in quantum mechanics in the description of the dynamics of the particle in a non-relativistic setting. Indeed, in the three-dimensional case, the magnetic field $B$ is exactly the curl of $A$, while for higher dimensions $N\geq 4$, $B$ should be thought of as the 2-form given by $B_{ij} =\partial_{j}A_{k}-\partial_{k}A_{j}$; see \cite{AHS, SS} for more physical background.  Motivated by this fact, many authors  \cite{AF, AFF, AS, ChS, CS, EL, K} studied the existence and multiplicity of nontrivial solutions for the following nonlinear magnetic Schr\"odinger equation: 
\begin{equation}\label{MSE}
-\e^{2}\Delta_{A} u+V(x)u=f(x, |u|^{2})u \quad \mbox{ in } \R^{N}.
\end{equation}
Equation \eqref{MSE} arises when we look for standing wave solutions $\psi(x, t)=u(x)e^{-\imath \frac{E}{\e}t}$, with $E\in \R$, for the following time-dependent Schr\"odinger equation in the presence of an external magnetic field:
$$
\imath \e \frac{\partial \psi}{\partial t}=\left(\frac{\e}{\imath}\nabla-A(x)\right)^{2} \psi+U(x)\psi-f(|\psi|^{2})\psi \quad \mbox{ in } (x, t)\in \R^{N}\times \R,
$$
where $U(x)=V(x)+E$. An important class of solutions of \eqref{MSE} are the so-called semi-classical states which concentrate and develop a spike shape around one, or more, particular points in $\R^{N}$, while vanishing elsewhere as $\e\rightarrow 0$. This interest is related to the fact that the transition from quantum mechanics to classical mechanics can be formally performed by sending $\e\rightarrow 0$.

In the last few years, much attention has been paid to the following fractional magnetic Schr\"odinger equation:
\begin{equation}\label{FMSE}
\e^{2s}(-\Delta)^{s}_{A}u+V(x)u=f(x, |u|^{2})u \quad \mbox{ in } \R^{N}.
\end{equation}
For instance, d'Avenia and Squassina \cite{DS} considered a class of minimization problems in the spirit of results due to Esteban and Lions in \cite{EL}.
In \cite{AD} the author and d'Avenia studied the existence and multiplicity of solutions to \eqref{FMSE} for small $\e>0$ when $f\in C^1$ has a subcritical growth and the potential $V$ satisfies the following global condition due to Rabinowitz \cite{Rab}:
\begin{equation}\label{RVC}
\liminf_{|x|\rightarrow \infty} V(x)>\inf_{x\in \R^{N}} V(x).
\end{equation}
In \cite{Acpde} (see also \cite{ANA2020}) the author used a penalization argument to prove the existence and concentration of nontrivial solutions to \eqref{FMSE} under assumptions $(V_1)$-$(V_2)$ and $(f_1)$-$(f_4)$. 
Further interesting results for nonlocal problems involving the operator $(-\Delta)^{s}_{A}$ can be found in \cite{Ana, Adcds, Adypde, Apisa, FPV, MRZ, NPSV}. 

In the absence of the magnetic vector potential, i.e. $A\equiv 0$, the operator $(-\Delta)^{s}_{A}$ reduces to the celebrated fractional Laplacian operator $(-\Delta)^{s}$ 
which arises in a quite natural way in many different physical contexts in which one has to consider long range anomalous diffusions and transport in highly heterogeneous medium; see \cite{BucurV, DPV} for more details on this topic.
Then \eqref{FMSE} boils down to the following fractional Schr\"odinger equation (see \cite{Laskin})
\begin{equation}\label{FSE}
\e^{2s}(-\Delta)^{s}u+V(x)u=f(x,u) \quad \mbox{ in } \R^{N},
\end{equation} 
for which several existence and multiplicity results under different assumptions on $V$ and $f$ have been established via appropriate variational and topological methods; see \cite{AM, A1, A3, DDPW, DMV, FigS} and references therein.
In particular, Davila et al. \cite{DDPW} proved that if $V\in C^{1, \alpha}(\R^{N})\cap L^{\infty}(\R^{N})$ and $\inf_{x\in \R^{N}} V(x)>0$, then (\ref{FSE}) has multi-peak solutions by using the Lyapunov-Schmidt reduction method. 
Fall et al. \cite{FMV} established necessary and sufficient conditions on the smooth potential $V$ in order to produce concentration of solutions of (\ref{FSE}) as $\e\rightarrow 0$.
Alves and Miyagaki \cite{AM} (see also \cite{A3}) considered the existence and concentration of positive solutions of \eqref{FSE} when $V$ satisfies a local condition and $f$ has subcritical growth at infinity. 
In \cite{A1} the author combined the generalized Nehari manifold approach introduced in \cite{SW} with the Ljusternik-Schnirelman theory to obtain a multiplicity result for \eqref{FSE} under assumptions $(V_1)$-$(V_2)$.

Particularly motivated by \cite{AFF, AM, A1, Acpde, ANA2020, AD}, 
in this paper we deal with the multiplicity and concentration phenomenon  as $\e\rightarrow 0$ of nontrivial solutions $u:\R^N\ri \C$ to \eqref{P}, under assumptions $(V_1)$-$(V_2)$ and $(f_1)$-$(f_4)$.
More precisely, our main result can be stated as follows:
\begin{thm}\label{thm1}
Assume that $(V_1)$-$(V_2)$ and $(f_1)$-$(f_4)$ hold. Then, for any $\delta>0$ such that
$$
M_{\delta}=\{x\in \R^{N}: {\rm dist}(x, M)\leq \delta\}\subset \Lambda,
$$ 
there exists $\e_{\delta}>0$ such that, for any $\e\in (0, \e_{\delta})$, problem \eqref{P} has at least $cat_{M_{\delta}}(M)$ nontrivial solutions. Moreover, if $u_{\e}$ denotes one of these solutions and $x_{\e}$ is a global maximum point of $|u_{\e}|$, then we have 
$$
\lim_{\e\rightarrow 0} V(x_{\e})=V_{0},
$$	
and there exists a constant $C>0$ such that
$$
|u_{\e}(x)|\leq \frac{C\e^{N+2s}}{C\e^{N+2s}+|x-x_{\e}|^{N+2s}} \quad \mbox{ for all } x\in \R^{N}.
$$
\end{thm}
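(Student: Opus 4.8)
The plan is to follow the by-now classical del Pino--Felmer penalization scheme adapted to the fractional magnetic setting, combined with the generalized Nehari manifold and Ljusternik--Schnirelman theory. First I would introduce a penalized nonlinearity $g(x,t)$ which coincides with $f(t)t$ inside $\Lambda$ and is truncated outside $\Lambda$ by something like $\min\{f(t), V_0/k\}$ with $k$ large, so that the modified problem
\begin{equation*}
\e^{2s}(-\Delta)^{s}_{A/\e}u + V(x)u = g(\e x, |u|^{2})u \quad \text{in } \R^{N}
\end{equation*}
has an associated $C^1$ functional $\mathcal{J}_\e$ on the magnetic fractional Sobolev space $H^{s}_{\e}$ satisfying the Palais--Smale condition and the mountain-pass geometry. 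One works on the generalized Nehari manifold $\mathcal{N}_\e$ (using $(f_4)$ to guarantee that each ray meets $\mathcal{N}_\e$ in a unique point and the fibering map has the right monotonicity), shows $c_\e = \inf_{\mathcal{N}_\e}\mathcal{J}_\e$ is attained, and compares $c_\e$ with the critical level of the limiting (constant potential $V_0$) autonomous problem via a diamagnetic-type inequality $|(-\Delta)^{s/2}_{A}u| \geq |(-\Delta)^{s/2}|u||$ pointwise, which links the magnetic problem to the modulus.

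Next I would set up the two topological maps needed for the Ljusternik--Schnirelman estimate. One map $\Phi_\e : M \to \mathcal{N}_\e$ is built by transplanting (with a cut-off and the phase factor $e^{\imath\, \theta}$ coming from the magnetic potential) a fixed ground state of the autonomous limit problem near each point of $M$; one checks $\mathcal{J}_\e(\Phi_\e(y)) \to c_{V_0}$ uniformly as $\e\to 0$. The other map, a barycenter-type map $\beta_\e : \mathcal{N}_\e^{d} \to M_\delta$ defined on a low sublevel set, is shown via a concentration-compactness / profile decomposition argument: if $\mathcal{J}_\e(u_n)\to c_{V_0}$ along $\mathcal{N}_{\e_n}$ then $|u_n|$ concentrates at a single point of $M$, forcing the barycenter into $M_\delta$. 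The composition $\beta_\e \circ \Phi_\e$ is homotopic to the inclusion $M\hookrightarrow M_\delta$, so by the abstract Ljusternik--Schnirelman category inequality $\mathcal{J}_\e$ has at least $cat_{M_\delta}(M)$ critical points on $\mathcal{N}_\e$, hence that many solutions of the penalized problem.

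Finally I would prove that for $\e$ small these penalized solutions actually solve the original problem \eqref{P}, i.e. that $|u_\e|$ stays small outside $\Lambda$ so the penalization is never triggered. This requires $L^\infty$ estimates and decay estimates: a Moser iteration (or De Giorgi--Nash type argument) for the fractional magnetic operator, using the diamagnetic inequality to reduce to the scalar fractional Schr\"odinger inequality, gives uniform $L^\infty$ bounds on $|u_\e|$, and then a comparison argument with a suitable barrier of the form $C\e^{N+2s}/(C\e^{N+2s}+|x-x_\e|^{N+2s})$ — exploiting that this function is a supersolution of the fractional equation outside a small ball — yields the stated pointwise decay; the location of the maxima $x_\e$ with $V(x_\e)\to V_0$ follows from the concentration analysis. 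Here a Kato-type inequality for $(-\Delta)^{s}_{A}$ (announced in the abstract) is the tool that makes the reduction to the modulus rigorous at the level of distributions.

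I expect the main obstacle to be the concentration-compactness step underlying $\beta_\e$ and the exclusion of penalization, namely showing that a sequence $u_n\in \mathcal{N}_{\e_n}$ with energy approaching $c_{V_0}$ cannot spread out or vanish and must concentrate at a point of $M$ — the magnetic phase factor obstructs a direct translation argument, so one must carefully control the phases (showing they can be gauged away locally, converging to a constant) before passing to the limit in the modulus, and then combine this with the uniform decay estimates to guarantee $|u_\e| < $ (the penalization threshold) on $\Lambda^c$ for all small $\e$. The regularity/decay machinery for the fractional magnetic operator, which is less standard than in the non-magnetic case, is the other technically delicate ingredient.
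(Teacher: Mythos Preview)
Your proposal is correct and follows essentially the same route as the paper: del Pino--Felmer penalization, generalized Nehari manifold \`a la Szulkin--Weth, the two maps $\Phi_\e$ and $\beta_\e$ for the Ljusternik--Schnirelman category estimate, and then a Kato-type inequality for $(-\Delta)^s_A$ combined with Moser iteration and a comparison argument to obtain the $L^\infty$ and polynomial decay estimates that kill the penalization. The one overcautious remark is your anticipated obstacle about controlling the magnetic phases in the concentration step: the paper sidesteps any phase analysis entirely by passing to the modulus $v_n=|u_n|(\cdot+\tilde y_n)$ via the diamagnetic inequality and carrying out the whole compactness argument (Lemma~\ref{prop3.3}) in the real scalar space $H^s(\R^N,\R)$, so no gauge normalization is ever needed.
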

The proof of Theorem \ref{thm1} will be obtained by combining suitable variational techniques and Ljusternik-Schnirelman theory. As in \cite{Acpde}, we adapt the penalization approach in \cite{DF} (see also \cite{AFF}) modifying appropriately the nonlinearity $f$ outside $\Lambda$ and by considering an auxiliary problem. The main feature of the corresponding modified energy functional $J_{\e}$ is that it satisfies all the assumptions of the mountain-pass theorem \cite{AR}. In order to obtain a multiplicity result for the modified problem, 
we use a strategy proposed by Benci and Cerami \cite{BC} which consists in making precise comparisons between the category of some sublevel sets of $J_{\e}$ and the category of the set $M$. 
Since the nonlinearity $f$ is only continuous, the Nehari manifold associated with $J_{\e}$ is not differentiable, so we cannot repeat the same arguments used in \cite{AFF, AD} for $C^{1}$-Nehari manifolds \cite{W}. 
We overcome this obstacle by taking advantage of some abstract critical point theorems from Szulkin and Weth \cite{SW}. We recall that
a similar approach is also used in \cite{A1} where $A\equiv 0$. However, the presence of the magnetic potential creates several difficulties which do not permit to adapt the techniques used in \cite{A1} so that a more accurate analysis will be needed in our situation. Indeed, the regularity assumption on $A$ and the use of the fractional diamagnetic inequality \cite{DS} will play a crucial role to obtain some refined estimates. 
Finally, we need to prove that for $\e>0$ small enough, the solutions $u_{\e}$ of the modified problem are also solutions of the original one. To achieve our purpose, we first prove a Kato's inequality \cite{Kato} for solutions of fractional magnetic problems, which essentially says that if $u:\R^{N}\ri \C$ satisfies $(-\Delta)^{s}_{A}u=f\in L^{1}_{loc}$ in $\R^{N}$ then $|u|: \R^{N}\ri \R$ satisfies $(-\Delta)^{s}|u|\leq \Re({\rm sign}(\bar{u})f)$ in the distributional sense. 
We stress that the proof of this result does not follow the original arguments due to Kato \cite{Kato} in which some auxiliary regularity lemmas are used and a double passage to the limit was done. Due to the nonlocal character of $(-\Delta)^{s}_{A}$ and the presence of the magnetic potential, we choose a suitable test function in the weak formulation of the fractional magnetic problem under consideration and we are able to pass to the limit and to obtain the required inequality. 
In some sense, we generalize the scheme used in \cite{Acpde}. Unfortunately the approach in \cite{Acpde} was based 
on the boundedness of the solution and it has only been used so far on specific examples (see \cite{Ana, Adypde, Adcds}). Thus it is not always clear to distinguish what is the core of the approach and what belongs to the specific problem under study. An important achievement of our paper is the derivation of a general abstract fractional Kato's inequality.
Clearly, with respect to the above mentioned studies, the advantage is the simplicity of the presentation and the "ready to use" aspect of the result. 
In light of this result and applying a comparison argument, we show that $u_{\e}$'s are actually solutions to \eqref{P} as long as $\e>0$ is small enough. 
 We emphasize that Theorem \ref{thm1} completes the study started in \cite{Acpde} because we are now considering the question  related to the multiplicity of \eqref{P}. Moreover, Theorem \ref{thm1} improves and extends in fractional setting Theorem $1.1$ in \cite{AFF} in which only $C^{1}$-nonlinearities were considered.
As far as we know,  this is the first time that penalization method jointly with Ljusternik-Schnirelman theory is used to obtain multiple solutions for \eqref{P} under local conditions on $V$ and the continuity of $f$. 
In view of the arguments used along this paper and in \cite{Acpde}, it is easy to see that a multiplicity result holds even in the critical and supercritical cases considered in \cite{Acpde}, more precisely, when we deal with the following fractional magnetic Schr\"odinger equation with critical growth:
\begin{equation}\label{PCRITICAL}
\e^{2s}(-\Delta)_{A/\varepsilon}^{s}u+V(x)u=f(|u|^{2})u+|u|^{\2-2}u \quad \mbox{ in } \mathbb{R}^{N},
\end{equation}
where $f$ satisfies the following assumptions:
\begin{compactenum}[$(h_1)$]
\item $f(t)=0$ for $t\leq 0$;
\item there exist $C_{0}>0$ and $q, \sigma\in (2, 2^{*}_{s})$ such that 
\begin{align*}
f(t)\geq C_{0} t^{\frac{q-2}{2}} \mbox{ for all } t\geq 0 \, \mbox{ and } \lim_{t\rightarrow \infty} \frac{f(t)}{t^{\frac{\sigma-2}{2}}}=0;
\end{align*}
and $C_{0}>0$ if either $N\geq 4s$, or $2s<N<4s$ and $2^{*}_{s}-2<q<2^{*}_{s}$, $C_{0}>0$ is sufficiently large if $2s<N<4s$ and $2<q\leq 2^{*}_{s}-2$.
\item there exists $\theta\in (2, \sigma)$ such that $0<\frac{\theta}{2} F(t)\leq t f(t)$ for any $t>0$, where $F(t)=\int_{0}^{t} f(\tau)d\tau$;
\item  the function $t\mapsto f(t)$ is increasing in $(0, \infty)$;
\end{compactenum} 
and when we study the following fractional magnetic Schr\"odinger equation with supercritical growth:
\begin{equation}\label{PSUPERCRITICAL}
\e^{2s}(-\Delta)_{A/\varepsilon}^{s}u+V(x)u=|u|^{q-2}u+\lambda |u|^{r-2}u \quad \mbox{ in } \mathbb{R}^{N},
\end{equation}
where $2<q<2^{*}_{s}< r$.
The proofs are only a simple adaptation of the techniques used in this paper with minor modifications. For completeness, we state without proofs the following theorems:
\begin{thm}\label{thm1critical}
Assume that $(V_1)$-$(V_2)$ and $(h_1)$-$(h_4)$ hold. Then, for any $\delta>0$ such that
$$
M_{\delta}=\{x\in \R^{N}: {\rm dist}(x, M)\leq \delta\}\subset \Lambda,
$$ 
there exists $\e_{\delta}>0$ such that, for any $\e\in (0, \e_{\delta})$, problem \eqref{PCRITICAL} has at least $cat_{M_{\delta}}(M)$ nontrivial solutions. Moreover, if $u_{\e}$ denotes one of these solutions and $x_{\e}$ is a global maximum point of $|u_{\e}|$, then we have 
$$
\lim_{\e\rightarrow 0} V(x_{\e})=V_{0}.
$$	
\end{thm}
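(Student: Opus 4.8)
The plan is to follow the scheme of the proof of Theorem \ref{thm1} essentially line by line, the only genuinely new feature being the loss of compactness produced by the critical term $|u|^{\2-2}u$. First I would carry out the del Pino--Felmer penalization \cite{DF}: after the rescaling $x\mapsto\e x$, replace the nonlinearity by $g(x,t)=\chi_{\La}(x)\bigl(f(t^{2})t+|t|^{\2-2}t\bigr)+\bigl(1-\chi_{\La}(x)\bigr)\tilde f(t)$, where $\tilde f$ is the standard truncation growing like $\tfrac{V_{0}}{k}\,t$ for a fixed large $k$, and study the corresponding modified functional $J_{\e}$ on the magnetic space $H^{s}_{\e}$. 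As in the subcritical case one checks that $J_{\e}$ has the mountain pass geometry; since $f$, hence $g$, is merely continuous, the Nehari manifold $\N_{\e}$ associated with $J_{\e}$ is not of class $C^{1}$, so I would again resort to the abstract critical point theory of Szulkin--Weth \cite{SW}, which yields that critical points of $J_{\e}$ constrained to $\N_{\e}$ are free critical points and that the minimax level $c_{\e}$ enjoys the usual variational characterizations.

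The core of the argument is the local Palais--Smale condition. Using the fractional diamagnetic inequality \cite{DS} to bound $[|u|]_{H^{s}}$ by the magnetic Gagliardo seminorm of $u$, together with the concentration--compactness principle for $(-\Delta)^{s}$, I would prove that $J_{\e}$ satisfies $(PS)_{c}$ on $\N_{\e}$ for every $c<c_{*}:=\frac{s}{N}\,S_{*}^{N/(2s)}$, where $S_{*}$ is the best constant in $\dot{H}^{s}(\R^{N})\hookrightarrow L^{\2}(\R^{N})$. It then remains to keep the relevant minimax levels strictly below $c_{*}$, and this is exactly where hypothesis $(h_{2})$ enters: inserting a truncated Talenti bubble concentrated at a point of $M$ into $J_{\e}$ and optimizing in the concentration parameter, the lower bound $f(t)\geq C_{0}t^{(q-2)/2}$ produces a negative Brezis--Nirenberg correction that gives $c_{\e}<c_{*}$ for all small $\e$, provided $C_{0}$ is large enough in the delicate range $2s<N<4s$, $2<q\leq\2-2$ (and for any $C_{0}>0$ otherwise); the same test functions also place the ground state level $m_{V_{0}}$ of the autonomous problem with constant potential $V_{0}$ below $c_{*}$.

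Once this compactness threshold is secured, the Ljusternik--Schnirelman part proceeds verbatim: I would build a map $\Phi_{\e}\colon M\to\N_{\e}$ by transplanting a fixed, suitably cut off ground state of the limit problem near points of $M$, and a barycenter map $\beta_{\e}$ from a low sublevel of $J_{\e}|_{\N_{\e}}$ into $M_{\delta}$, arranged so that $\beta_{\e}\circ\Phi_{\e}$ is homotopic to the inclusion $M\hookrightarrow M_{\delta}$; the Benci--Cerami comparison \cite{BC} then gives $cat\bigl(\N_{\e}^{\,c_{\e}+h(\e)}\bigr)\geq cat_{M_{\delta}}(M)$ for a suitable $h(\e)\to 0$, and the category version of the abstract theorem in \cite{SW} yields at least $cat_{M_{\delta}}(M)$ critical points of $J_{\e}$ on $\N_{\e}$, hence that many solutions of the modified equation. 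To finish, using the fractional Kato inequality proved above, a Moser/De Giorgi iteration and a comparison argument exactly as in the subcritical setting, I would show that for $\e$ small these solutions satisfy $|u_{\e}|\leq t_{k}$ outside $\La/\e$, so the penalization is inactive and the $u_{\e}$ solve \eqref{PCRITICAL}; a standard concentration analysis of a global maximum point $x_{\e}$ of $|u_{\e}|$ then gives $V(x_{\e})\to V_{0}$. The main obstacle is the pair of middle steps — establishing $(PS)_{c}$ below $c_{*}$ in the magnetic framework and performing the Brezis--Nirenberg estimates sharply enough, with the correct dependence on $C_{0}$ and $N$, to keep all minimax levels under $c_{*}$.
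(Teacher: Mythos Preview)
Your proposal is correct and matches the paper's own approach: the paper does not give a proof of Theorem~\ref{thm1critical} but states explicitly that it is ``only a simple adaptation of the techniques used in this paper with minor modifications,'' referring to the penalization/Szulkin--Weth/Ljusternik--Schnirelman scheme of Theorem~\ref{thm1} combined with the critical-case estimates from \cite{Acpde}. Your outline---penalize the full nonlinearity outside $\Lambda$, establish the local $(PS)_{c}$ condition below $\frac{s}{N}S_{*}^{N/(2s)}$ via the diamagnetic inequality and concentration--compactness, use $(h_{2})$ and truncated extremals to force all relevant levels under this threshold (with the dichotomy on $C_{0}$ coming precisely from the Brezis--Nirenberg computation in dimensions $2s<N<4s$), then run the barycenter/category argument and conclude with the Kato inequality and Moser iteration---is exactly this adaptation.
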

\begin{thm}\label{thm1supercritical}
Assume that $(V_1)$-$(V_2)$ hold. Then there exists $\lambda_{0}>0$ with the following property: for any $\lambda\in (0, \lambda_{0})$ and for any $\delta>0$ such that
$$
M_{\delta}=\{x\in \R^{N}: {\rm dist}(x, M)\leq \delta\}\subset \Lambda,
$$ 
there exists $\e_{\lambda, \delta}>0$ such that, for any $\e\in (0, \e_{\lambda, \delta})$, problem \eqref{PSUPERCRITICAL} has at least $cat_{M_{\delta}}(M)$ nontrivial solutions. Moreover, if $u_{\e}$ denotes one of these solutions and $x_{\e}$ is a global maximum point of $|u_{\e}|$, then we have 
$$
\lim_{\e\rightarrow 0} V(x_{\e})=V_{0}.
$$	
\end{thm}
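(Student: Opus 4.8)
The plan is to handle the supercritical term in \eqref{PSUPERCRITICAL} by a truncation argument: we first replace $|u|^{r-2}u$ by a nonlinearity of subcritical growth depending on a truncation parameter $K$, apply Theorem \ref{thm1} to the modified equation, and then show that for $\lambda$ small the solutions produced in this way are so small in $L^{\infty}$ that the truncation is never active, hence they solve \eqref{PSUPERCRITICAL}.

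Fix $\bar{q}\in(q,\2)$. For $K>0$ let $g_K:\R\ri\R$ be the continuous function with $g_K(t)=0$ for $t\le0$, $g_K(t)=t^{\frac{r-2}{2}}$ for $0\le t\le K^{2}$ and $g_K(t)=K^{r-\bar{q}}t^{\frac{\bar{q}-2}{2}}$ for $t\ge K^{2}$, and set $f_{K,\lambda}(t)=t^{\frac{q-2}{2}}+\lambda g_K(t)$ for $t>0$, $f_{K,\lambda}(t)=0$ for $t\le0$. A direct verification gives that $f_{K,\lambda}$ satisfies $(f_1)$--$(f_4)$ for every $K>0$, $\lambda>0$: it vanishes on $(-\infty,0]$ and is increasing on $(0,\infty)$; at infinity it behaves like $t^{\frac{\bar{q}-2}{2}}$, so $(f_2)$ holds with any exponent in $(\bar{q},\2)$; and $(f_3)$ holds with $\theta=q$, since $\tfrac{q}{2}G_K(t)\le t\,g_K(t)$ for all $t\ge0$ (here $G_K(t)=\int_0^t g_K$) because $2<q<\bar{q}<r$. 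Therefore Theorem \ref{thm1} applies to the truncated problem
\begin{equation*}
\e^{2s}(-\Delta)_{A/\e}^{s}u+V(x)u=f_{K,\lambda}(|u|^{2})u\quad\text{ in }\R^{N},
\end{equation*}
and yields, for every $\delta>0$ with $M_{\delta}\subset\Lambda$, a number $\e_{\lambda,K,\delta}>0$ such that for $\e\in(0,\e_{\lambda,K,\delta})$ this equation has at least $cat_{M_{\delta}}(M)$ nontrivial solutions $u_{\e}$, each one satisfying $\lim_{\e\ri0}V(x_{\e})=V_{0}$ (and the decay estimate of Theorem \ref{thm1}).

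The heart of the argument is an $L^{\infty}$-bound for these $u_{\e}$ that is uniform in $\e$ and explicit in $\lambda$ and $K$: there are $\beta=\beta(N,s,q,\bar{q})>0$ and $C_{1}=C_{1}(N,s,q,\bar{q},V_{0})>0$ such that
\begin{equation*}
\|u_{\e}\|_{L^{\infty}(\R^{N})}\le C_{1}\bigl(1+\lambda K^{r-\bar{q}}\bigr)^{\beta}\qquad\text{for every }\e\in(0,\e_{\lambda,K,\delta}),\ \lambda\in(0,1].
\end{equation*}
To see this, note first that, since $f_{K,\lambda}(t)\ge t^{\frac{q-2}{2}}$, evaluating the penalized functional $J_{\e}$ along a fixed concentrating path (independent of $\lambda,K,\delta$ and supported in $\Lambda$ for $\e$ small) bounds the mountain-pass level $c_{\e}$ from above by the corresponding quantity for the pure power $q$, which is bounded independently of $\e$ small, $\lambda$ and $K$; by $(f_3)$ this gives a bound for $\|u_{\e}\|$ in the magnetic Sobolev space that is uniform in $\e,\lambda,K$. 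Next, by the fractional diamagnetic inequality \cite{DS} and the Kato inequality established in this paper, $|u_{\e}|$ satisfies, in the distributional sense,
\begin{equation*}
\e^{2s}(-\Delta)^{s}|u_{\e}|+V(x)|u_{\e}|\le f_{K,\lambda}(|u_{\e}|^{2})|u_{\e}|\le |u_{\e}|^{q-1}+\lambda K^{r-\bar{q}}|u_{\e}|^{\bar{q}-1},
\end{equation*}
and a Moser-type iteration performed on the scalar function $|u_{\e}|$ (after rescaling $x\mapsto\e x$), keeping the subcritical growth constant $1+\lambda K^{r-\bar{q}}$ explicit, yields the displayed bound.

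Finally we close the loop by choosing the parameters. Put $K_{0}:=2^{\beta}C_{1}$, which depends only on $N,s,q,\bar{q},V_{0}$, and let $\lambda_{0}:=K_{0}^{-(r-\bar{q})}$. If $\lambda\in(0,\lambda_{0})$ then $\lambda K_{0}^{r-\bar{q}}<1$, so, setting $\e_{\lambda,\delta}:=\e_{\lambda,K_{0},\delta}$, for every $\e\in(0,\e_{\lambda,\delta})$ we get
\begin{equation*}
\|u_{\e}\|_{L^{\infty}(\R^{N})}\le C_{1}\bigl(1+\lambda K_{0}^{r-\bar{q}}\bigr)^{\beta}\le 2^{\beta}C_{1}=K_{0}.
\end{equation*}
Hence $|u_{\e}(x)|^{2}\le K_{0}^{2}$ for all $x$, so $g_{K_{0}}(|u_{\e}|^{2})=|u_{\e}|^{r-2}$: the truncation never intervenes and each $u_{\e}$ is in fact a solution of \eqref{PSUPERCRITICAL}, while the count $cat_{M_{\delta}}(M)$ and the concentration $\lim_{\e\ri0}V(x_{\e})=V_{0}$ are those given by Theorem \ref{thm1}. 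The main obstacle is precisely the third step: one has to carry out the De Giorgi--Moser iteration for the \emph{magnetic} fractional operator---which forces a detour through the diamagnetic and Kato inequalities, rather than testing the equation directly with powers of $u$---while retaining a completely $\e$-independent and quantitative control of the dependence of the $L^{\infty}$-bound on $K$, since it is exactly the polynomial factor $K^{r-\bar{q}}$ that gets absorbed by the smallness of $\lambda$.
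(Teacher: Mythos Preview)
The paper states Theorem \ref{thm1supercritical} explicitly \emph{without proof}, saying only that ``the proofs are only a simple adaptation of the techniques used in this paper with minor modifications'' combined with those of \cite{Acpde}. So there is no argument in the paper to compare yours against; what the authors have in mind is precisely the truncation device you describe, which is the method of \cite{Acpde} for the supercritical regime.

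Your sketch is correct. The verification that $f_{K,\lambda}$ satisfies $(f_1)$--$(f_4)$ with $\theta=q$ is accurate (the key computation being $\tfrac{q}{2}G_K(t)\le t\,g_K(t)$, which holds since $q<\bar q<r$), so Theorem \ref{thm1} applies to the truncated problem. The uniform bound on the mountain-pass level follows because the test function $\Psi_{\e,y}$ is supported in $\Lambda_\e$, where $g=f_{K,\lambda}\ge t^{(q-2)/2}$, hence $J_\e^{K,\lambda}\le J_\e^{q}$ along this path; combined with the estimate $J_\e(v)\ge\bigl(\tfrac12-\tfrac1q\bigr)\bigl(1-\tfrac1k\bigr)\|v\|_\e^2$ on $\mathcal{N}_\e$ (valid with constants independent of $K,\lambda$) this gives the $K,\lambda$-free $H^s$-bound on $|u_\e|$. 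The reduction to a scalar subsolution via Theorem \ref{KATOTHMNG} and the diamagnetic inequality, followed by Moser iteration tracking the coefficient $1+\lambda K^{r-\bar q}$, is exactly how Lemma \ref{moser} proceeds. The only point to watch is that the threshold $\e_{\lambda,K,\delta}$ furnished by Theorem \ref{thm1} depends on $f_{K,\lambda}$ through the limiting ground state $w$ and the cutoff value $a$; but since you fix $K=K_0$ \emph{first} (from universal constants) and only then send $\e\to0$, there is no circularity, and the Moser bound---depending only on the uniform energy and the subcritical growth constant---holds for every $\e\in(0,\e_{\lambda,K_0,\delta})$.
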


The paper is structured as follows. In Section $2$ we introduce the notations and we collect some preliminary results for fractional Sobolev spaces. In Section $3$ we study the modified functional. In Section $4$ we consider the scalar limiting problem. In Section $5$ we provide a multiplicity result for the modified problem. Finally, in Section $6$, we give the proof of Theorem \ref{thm1}.

\section{Preliminaries}\label{sec2}

Fix $s\in (0, 1)$ and we denote by $\mathcal{D}^{s, 2}(\R^{N}, \R)$ the completion of $C^{\infty}_{c}(\R^{N}, \R)$ with respect to the Gagliardo seminorm 
$$
[u]=[u]_{s}=\sqrt{\iint_{\R^{2N}} \frac{|u(x)-u(y)|^{2}}{|x-y|^{N+2s}} dxdy}.
$$
When $N>2s$, we also know (see Theorem $2.2$ in \cite{DPQNA}) that 
$$
\mathcal{D}^{s, 2}(\R^{N}, \R)=\{u\in L^{\2}(\R^{N}, \R): [u]<\infty\}.
$$
We denote by
$H^{s}(\R^{N}, \R)$ the fractional Sobolev space 
$$
H^{s}(\R^{N}, \R)=\{u\in L^{2}(\R^{N}, \R): [u]<\infty\}.
$$
It is well-known that the embedding $H^{s}(\R^{N}, \R)\subset L^{q}(\R^{N}, \R)$ is continuous for all $q\in [2, \2)$ and locally compact for all $q\in [1, \2)$; see \cite{DPV}.\\
Let $L^{2}(\R^{N}, \C)$ be the space of complex-valued functions such that $\|u\|_{L^{2}(\R^{N})}^{2}=\int_{\R^{N}}|u|^{2}\, dx<\infty$ endowed with the inner product 
$\langle u, v\rangle_{L^{2}}=\Re\int_{\R^{N}} u\bar{v}\, dx$, where $\Re(z)$ denotes the real part of $z\in \C$ and $\bar{z}$ is its conjugate. 
Let us denote the magnetic Gagliardo seminorm by 
$$
[u]_{A}=[u]_{s, A}=\sqrt{\iint_{\R^{2N}} \frac{|u(x)-e^{\imath (x-y)\cdot A(\frac{x+y}{2})} u(y)|^{2}}{|x-y|^{3+2s}} \, dxdy},
$$
and consider
$$
\mathcal{D}_{A}^{s, 2}(\R^N,\C)=
\left\{
u\in L^{2_s^*}(\R^N,\C) : [u]^{2}_{A}<\infty
\right\}.
$$
Set $A_{\e}(x)=A(\e x)$ and $V_{\e}(x)=V(\e x)$. Then we define the Hilbert space
$$
\h=
\left\{
u\in \mathcal{D}_{A_{\e}}^{s, 2}(\R^N,\C): \int_{\R^{N}} V_{\e}(x) |u|^{2}\, dx <\infty
\right\}
$$ 
endowed with the scalar product
\begin{align*}
&\langle u , v \rangle_{\e}= \Re\iint_{\R^{2N}} \frac{(u(x)-e^{\imath(x-y)\cdot A_{\e}(\frac{x+y}{2})} u(y))\overline{(v(x)-e^{\imath(x-y)\cdot A_{\e}(\frac{x+y}{2})}v(y))}}{|x-y|^{N+2s}} \,dx dy+\Re \int_{\R^{N}} V_{\e}(x) u \bar{v} \,dx
\end{align*}
for all $u, v\in \h$, and let
$$
\|u\|_{\e}=\sqrt{\langle u , u \rangle_{\e}}.
$$
In what follows we list some useful technical lemmas which will be frequently used along the paper; see \cite{AD, DS} for more details.
\begin{thm}\cite{AD, DS}\label{density}
The space $\h$ is complete and $C_c^\infty(\R^N,\C)$ is dense in $\h$. 
\end{thm}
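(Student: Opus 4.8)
The statement to prove asserts that the magnetic space $\h$ is complete and that $C_c^\infty(\R^N,\C)$ is dense in it. My plan is to reduce both assertions to known facts about the non-magnetic fractional Sobolev space, using the fractional diamagnetic inequality as the bridge, together with a localization/mollification argument for density.

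\textbf{Completeness.} First I would observe that $\|\cdot\|_\e$ genuinely is a norm: nonnegativity is clear, and $\|u\|_\e=0$ forces $\int_{\R^N}V_\e|u|^2\,dx=0$, hence $u=0$ a.e. since $V_\e\geq V_0>0$ by $(V_1)$. Next, given a Cauchy sequence $(u_n)$ in $\h$, I would invoke the fractional diamagnetic inequality (attributed to d'Avenia--Squassina \cite{DS}), namely $[\,|u|\,]_s \leq [u]_{s,A_\e}$ pointwise-in-integrand form
$$
\bigl|\,|u(x)|-|u(y)|\,\bigr| \leq \bigl| u(x) - e^{\imath(x-y)\cdot A_\e(\frac{x+y}{2})} u(y)\bigr|,
$$
which shows that $(|u_n|)$ is Cauchy in $H^s(\R^N,\R)$ (using also the $L^2$ control from the $V_\e$-term and $V_\e\geq V_0$); thus $|u_n|$ is bounded in $L^{\2}$ and in $L^2$. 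Combined with the fact that the difference quotients $\Phi_n(x,y):=\frac{u_n(x)-e^{\imath(x-y)\cdot A_\e((x+y)/2)}u_n(y)}{|x-y|^{(N+2s)/2}}$ form a Cauchy sequence in $L^2(\R^{2N},\C)$ and $(u_n)$ is Cauchy in $L^2(\R^N,\C)$ (again via $V_\e\geq V_0$), I would extract $u_n\to u$ in $L^2_{loc}$ and a.e.\ along a subsequence, pass to the limit in $\Phi_n$ to identify the weak limit as the difference quotient of $u$, and conclude $u\in\h$ with $u_n\to u$ in $\h$. The standard subsequence-plus-Cauchy trick then upgrades this to convergence of the full sequence.

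\textbf{Density.} For $C_c^\infty(\R^N,\C)$ I would argue in two stages. \emph{Stage 1 (truncation).} Given $u\in\h$, multiply by a cutoff $\eta_R(x)=\eta(x/R)$ with $\eta\in C_c^\infty$, $\eta\equiv 1$ on $B_1$; one shows $\eta_R u\to u$ in $\h$ as $R\to\infty$, the only delicate term being the magnetic Gagliardo seminorm of $(1-\eta_R)u$, which is handled by splitting the double integral into near-diagonal and far-diagonal regions and using a commutator estimate plus dominated convergence — the near-diagonal part is controlled by $\|\nabla\eta_R\|_\infty\sim 1/R$ together with the $L^2$-in-the-annulus bound, and the far part by the tail of $[u]_{s,A_\e}^2$. \emph{Stage 2 (mollification).} For $u$ now compactly supported, one mollifies: $u_\rho=u*\phi_\rho$. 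The subtlety is that the magnetic phase $e^{\imath(x-y)\cdot A_\e((x+y)/2)}$ does not commute with convolution; here the Hölder continuity of $A$ (hence of $A_\e$) enters, allowing the phase factors to be compared with error $O(|x-y|^\alpha)$ near the diagonal, which is integrable against $|x-y|^{-N-2s}$ when combined with the $L^2$-modulus of continuity of $u$. This shows $u_\rho\to u$ in $\h$ and $u_\rho\in C_c^\infty$.

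\textbf{Main obstacle.} The genuinely nontrivial point — and the place where the magnetic structure really bites — is Stage 2 of the density argument: controlling the magnetic seminorm under mollification despite the non-commutativity of the oscillatory phase with convolution. The diamagnetic inequality is one-directional and does not help here, so one must exploit the regularity of $A$ directly via the elementary bound $|e^{\imath\theta_1}-e^{\imath\theta_2}|\leq|\theta_1-\theta_2|$ and the Hölder modulus $|A_\e((x+y)/2)-A_\e((x'+y)/2)|\lesssim|x-x'|^\alpha$, then arrange the resulting error terms so they are absorbed by a near-diagonal estimate of the form $\iint_{|x-y|<1}|x-y|^{2\alpha-N-2s}\,\|u(\cdot)-u(\cdot+\xi)\|_{L^2}^2\cdots$, which is finite. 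In practice, since Theorem \ref{density} is quoted from \cite{AD, DS}, one may simply cite those references; but the sketch above is how I would reconstruct the proof from scratch.
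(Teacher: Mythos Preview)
The paper does not prove Theorem~\ref{density} at all: it is stated with the citation \cite{AD, DS} and no argument is given, so there is no ``paper's own proof'' to compare against. You correctly recognize this in your final paragraph, and simply citing the references is exactly what the paper does.

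Your reconstruction is largely sound, but one remark on the completeness argument: the diamagnetic inequality gives $[\,|u|\,]\le[u]_{A_\e}$ for a \emph{single} function, and hence boundedness of $(|u_n|)$ in $H^s$, but it does \emph{not} directly yield that $(|u_n|)$ is Cauchy in $H^s(\R^N,\R)$ as you claim --- there is no inequality of the form $[\,|u_n|-|u_m|\,]\le[u_n-u_m]_{A_\e}$. Fortunately your argument does not actually need this: the Cauchy property of the difference quotients $\Phi_n$ in $L^2(\R^{2N},\C)$ together with $u_n$ Cauchy in $L^2(\R^N,\C)$ (from $V_\e\ge V_0>0$) is already enough to identify the limit and close the proof, so the misstatement is harmless. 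The density sketch via truncation and mollification, with the H\"older regularity of $A$ controlling the phase error under convolution, is the approach taken in \cite{AD}.
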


\begin{lem}\label{DI}\cite{DS}
If $u\in \mathcal{D}^{s,2}_{A}(\R^{N}, \C)$ then $|u|\in \mathcal{D}^{s, 2}(\R^{N}, \R)$ and we have
$$
[|u|]\leq [u]_{A}.
$$
\end{lem}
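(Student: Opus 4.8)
The plan is to reduce the diamagnetic seminorm estimate to an elementary pointwise inequality for complex numbers and then integrate. The key observation is the following: for every $z, w \in \C$ and every $\theta \in \R$ one has
$$
\big| |z| - |w| \big| \le \big| z - e^{\imath \theta} w \big|.
$$
This is immediate from the reverse triangle inequality, since $|e^{\imath \theta} w| = |w|$ gives $\big| |z| - |w| \big| = \big| |z| - |e^{\imath \theta} w| \big| \le |z - e^{\imath \theta} w|$. The content of the lemma is entirely contained in this inequality; the rest is monotonicity of the integral.

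Given $u \in \mathcal{D}^{s,2}_{A}(\R^{N}, \C)$, I would apply the pointwise inequality for a.e.\ $(x,y) \in \R^{2N}$ with $z = u(x)$, $w = u(y)$ and $\theta = (x-y)\cdot A\big(\tfrac{x+y}{2}\big) \in \R$, obtaining
$$
\big| |u(x)| - |u(y)| \big| \le \big| u(x) - e^{\imath (x-y)\cdot A(\frac{x+y}{2})} u(y) \big| \quad \text{for a.e. } (x,y)\in \R^{2N}.
$$
Squaring, dividing by $|x-y|^{N+2s} > 0$, and integrating over $\R^{2N}$ then yields
$$
[|u|]^{2} = \iint_{\R^{2N}} \frac{\big| |u(x)| - |u(y)| \big|^{2}}{|x-y|^{N+2s}}\, dx\, dy \le \iint_{\R^{2N}} \frac{\big| u(x) - e^{\imath (x-y)\cdot A(\frac{x+y}{2})} u(y) \big|^{2}}{|x-y|^{N+2s}}\, dx\, dy = [u]_{A}^{2} < \infty,
$$
which is precisely the claimed seminorm bound. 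To conclude that $|u| \in \mathcal{D}^{s,2}(\R^{N}, \R)$, I would use that $u \in \mathcal{D}^{s,2}_{A}(\R^{N},\C) \subset L^{2^{*}_{s}}(\R^{N},\C)$, so the real-valued function $|u|$ is measurable with $\||u|\|_{L^{2^{*}_{s}}} = \|u\|_{L^{2^{*}_{s}}} < \infty$; combining this with $[|u|] < \infty$ and the characterization $\mathcal{D}^{s,2}(\R^{N}, \R) = \{v \in L^{2^{*}_{s}}(\R^{N}, \R) : [v] < \infty\}$ (valid since $N > 2s$, by Theorem $2.2$ in \cite{DPQNA}) gives $|u| \in \mathcal{D}^{s,2}(\R^{N}, \R)$.

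There is essentially no serious obstacle in this argument. The only point deserving a word of care is the measurability of the integrands, i.e.\ of the map $(x,y) \mapsto (x-y)\cdot A\big(\tfrac{x+y}{2}\big)$ and hence of $(x,y)\mapsto e^{\imath(x-y)\cdot A(\frac{x+y}{2})}u(y)$, which follows from the continuity of $A$; once this is in place, the pointwise inequality applied under the integral sign delivers the result directly, with no limiting or regularization procedure needed.
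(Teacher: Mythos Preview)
Your argument is correct. Note, however, that the paper does not actually prove this lemma: it is stated with a citation to \cite{DS} and no proof is given in the present paper. The approach you wrote --- the pointwise reverse triangle inequality $\big||z|-|w|\big|\le |z-e^{\imath\theta}w|$ followed by integration, together with the $L^{2^*_s}$ characterization of $\mathcal{D}^{s,2}(\R^N,\R)$ --- is the standard proof of the fractional diamagnetic inequality and is essentially what appears in \cite{DS}.
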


\begin{thm}\label{Sembedding}\cite{DS}
	The space $\h$ is continuously embedded in $L^{r}(\R^{N}, \C)$ for all $r\in [2, 2^{*}_{s}]$, and compactly embedded in $L^{r}(B_{R}, \C)$ for all $R>0$ and any $r\in [1, 2^{*}_{s})$.\\
\end{thm}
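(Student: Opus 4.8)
The plan is to prove the two assertions separately: the continuous embedding follows from the diamagnetic inequality of Lemma~\ref{DI} combined with the fractional Sobolev inequality, assumption $(V_1)$, and interpolation, while the compact embedding requires in addition a pointwise splitting of the magnetic difference quotient in order to recover control of the full complex‑valued Gagliardo seminorm on a ball. For the continuous embedding, note first that $s\in(0,1)$ and $N\geq 3$ force $N>2s$, so $\2=\frac{2N}{N-2s}$ is well defined and $\|v\|_{L^{\2}(\R^{N})}\leq S_{N,s}[v]$ holds for every $v\in\mathcal{D}^{s,2}(\R^{N},\R)$. Given $u\in\h$, Lemma~\ref{DI} yields $|u|\in\mathcal{D}^{s,2}(\R^{N},\R)$ with $[|u|]\leq[u]_{A_{\e}}\leq\|u\|_{\e}$, whence
$$
\|u\|_{L^{\2}(\R^{N})}=\big\||u|\big\|_{L^{\2}(\R^{N})}\leq S_{N,s}[|u|]\leq S_{N,s}\|u\|_{\e}.
$$
For $r=2$, $(V_1)$ gives $V_{\e}(x)=V(\e x)\geq V_{0}$, so $V_{0}\|u\|_{L^{2}(\R^{N})}^{2}\leq\int_{\R^{N}}V_{\e}(x)|u|^{2}\,dx\leq\|u\|_{\e}^{2}$; for $r\in(2,\2)$, writing $\frac{1}{r}=\frac{\vartheta}{2}+\frac{1-\vartheta}{\2}$ with $\vartheta\in(0,1)$, Hölder's inequality gives $\|u\|_{L^{r}(\R^{N})}\leq\|u\|_{L^{2}(\R^{N})}^{\vartheta}\|u\|_{L^{\2}(\R^{N})}^{1-\vartheta}\leq C\|u\|_{\e}$, which covers all $r\in[2,\2]$.

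For the compact embedding I would show that $u\mapsto u|_{B_{R}}$ maps $\h$ continuously into $H^{s}(B_{R},\C)$ and then invoke the fractional Rellich--Kondrachov theorem on the ball. The starting point is the elementary inequality
$$
|u(x)-u(y)|^{2}\leq 2\,\big|u(x)-e^{\imath(x-y)\cdot A_{\e}(\frac{x+y}{2})}u(y)\big|^{2}+2\,\big|e^{\imath(x-y)\cdot A_{\e}(\frac{x+y}{2})}-1\big|^{2}|u(y)|^{2}.
$$
Since $A$ is continuous, hence bounded on compacta, one has $|A_{\e}|\leq C_{R,\e}:=\sup_{B_{\e R}}|A|$ on $B_{R}$, and using $|e^{\imath t}-1|\leq|t|$ for $t\in\R$ this gives $\big|e^{\imath(x-y)\cdot A_{\e}(\frac{x+y}{2})}-1\big|^{2}\leq C_{R,\e}^{2}|x-y|^{2}$ for $x,y\in B_{R}$. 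Integrating over $B_{R}\times B_{R}$, and noting that $\sup_{y\in B_{R}}\int_{B_{R}}|x-y|^{2-N-2s}\,dx<\infty$ because $N+2s-2<N$ (this is exactly where $s<1$ is used), I obtain
$$
\iint_{B_{R}\times B_{R}}\frac{|u(x)-u(y)|^{2}}{|x-y|^{N+2s}}\,dxdy\leq 2[u]_{A_{\e}}^{2}+C\,\|u\|_{L^{2}(B_{R})}^{2}\leq C'\|u\|_{\e}^{2},
$$
where the last step uses $\|u\|_{L^{2}(B_{R})}\leq V_{0}^{-1/2}\|u\|_{\e}$. Hence a bounded sequence in $\h$ is bounded in $H^{s}(B_{R},\C)$, and the compact embedding $H^{s}(B_{R},\C)\hookrightarrow\hookrightarrow L^{r}(B_{R},\C)$ for $r\in[1,\2)$ (applied to real and imaginary parts) produces a subsequence converging strongly in $L^{r}(B_{R},\C)$.

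I expect the compactness step to be the only real obstacle: Lemma~\ref{DI} by itself controls $|u|$ but not $u$, so it cannot be used directly to extract a strongly convergent subsequence in $L^{r}(B_{R},\C)$; the splitting above --- which replaces the magnetic phase by an extra factor $|x-y|^{2}$ at the price of localizing to $B_{R}$ --- is what restores control of the full complex‑valued seminorm, and from there the argument is standard. It should also be kept in mind that the constants obtained in this way depend on $\e$ through $\sup_{B_{\e R}}|A|$, which is harmless here because $\e$ is fixed.
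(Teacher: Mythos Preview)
Your proof is correct. The paper does not supply its own proof of this statement: Theorem~\ref{Sembedding} is quoted from \cite{DS} without argument, so there is nothing in the paper to compare against. Your route---diamagnetic inequality plus fractional Sobolev plus interpolation for the continuous part, and the phase-splitting
\[
|u(x)-u(y)|^{2}\leq 2\,\bigl|u(x)-e^{\imath(x-y)\cdot A_{\e}(\frac{x+y}{2})}u(y)\bigr|^{2}+2\,\bigl|e^{\imath(x-y)\cdot A_{\e}(\frac{x+y}{2})}-1\bigr|^{2}|u(y)|^{2}
\]
together with $|e^{\imath t}-1|\leq |t|$ and local boundedness of $A$ to land in $H^{s}(B_{R},\C)$ for the compact part---is precisely the standard argument and is essentially how the result is proved in \cite{DS}. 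Your remark that the constants depend on $\e$ through $\sup_{B_{\e R}}|A|$ is accurate and, as you note, irrelevant for a fixed~$\e$.
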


\begin{lem}\label{aux}\cite{AD}
If $u\in H^{s}(\R^{N}, \R)$ and $u$ has compact support, then $w=e^{\imath A(0)\cdot x} u \in \h$.
\end{lem}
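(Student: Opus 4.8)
The plan is to verify directly the two conditions defining $\h$ for the function $w=e^{\imath A(0)\cdot x}u$: namely that $w\in\mathcal{D}^{s,2}_{A_{\e}}(\R^N,\C)$ and that $\int_{\R^N}V_{\e}(x)|w|^{2}\,dx<\infty$. Two of the required facts will be immediate. Since $|e^{\imath A(0)\cdot x}|=1$ we have $|w|=|u|$ pointwise, so $w\in L^{\2}(\R^N,\C)$ because the fractional Sobolev inequality gives $\|u\|_{L^{\2}}\le C[u]_{s}<\infty$ for $u\in H^{s}(\R^N,\R)$ (recall $N>2s$). Likewise, taking $R>0$ with $\supp u\subset B_R$ and using that $V_{\e}=V(\e\,\cdot)$ is continuous, hence bounded on $B_R$, we get $\int_{\R^N}V_{\e}|w|^{2}\,dx=\int_{B_R}V_{\e}|u|^{2}\,dx\le\big(\sup_{B_R}V_{\e}\big)\|u\|_{L^{2}}^{2}<\infty$. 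Thus everything reduces to proving that the magnetic Gagliardo seminorm $[w]_{A_{\e}}$ is finite.

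For this, I would set $\Theta(x,y)=(x-y)\cdot\big(A_{\e}(\tfrac{x+y}{2})-A(0)\big)$ and factor the unimodular phase $e^{\imath A(0)\cdot x}$ out of the numerator, which yields the identity
$$\big|w(x)-e^{\imath(x-y)\cdot A_{\e}(\frac{x+y}{2})}w(y)\big|=\big|u(x)-e^{\imath\Theta(x,y)}u(y)\big|\le|u(x)-u(y)|+|u(y)|\,\big|1-e^{\imath\Theta(x,y)}\big|.$$
Squaring, using $|a+b|^{2}\le2|a|^{2}+2|b|^{2}$ and integrating over $\R^{2N}$ against the kernel $|x-y|^{-N-2s}$, one obtains $[w]_{A_{\e}}^{2}\le2[u]_{s}^{2}+2I$, where $[u]_{s}^{2}<\infty$ because $u\in H^{s}(\R^N,\R)$ and
$$I=\iint_{\R^{2N}}\frac{|u(y)|^{2}\,\big|1-e^{\imath\Theta(x,y)}\big|^{2}}{|x-y|^{N+2s}}\,dx\,dy.$$

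The crux of the argument — and the only genuine computation — is then the finiteness of $I$, and this is the step I expect to be the main obstacle. Since $u$ vanishes outside $B_R$ it suffices to integrate in $y$ over $B_R$; after the substitution $z=x-y$ the inner integral becomes $\int_{\R^N}\big|1-e^{\imath z\cdot(A_{\e}(y+z/2)-A(0))}\big|^{2}|z|^{-N-2s}\,dz$, which I split at $|z|=1$. On $\{|z|>1\}$ the crude bound $|1-e^{\imath t}|^{2}\le4$ leaves the convergent tail $\int_{|z|>1}4|z|^{-N-2s}\,dz$. On $\{|z|\le1\}$ I use $|1-e^{\imath t}|^{2}\le t^{2}$ together with $|z\cdot(A_{\e}(y+z/2)-A(0))|\le|z|\,|A(\e y+\e z/2)-A(0)|\le C_{R,\e}|z|$, the last bound holding with a finite constant $C_{R,\e}$ because, for $y\in B_R$ and $|z|\le1$, the point $\e y+\e z/2$ stays in a fixed bounded set on which the continuous function $A$ is bounded (its H\"older regularity is not even needed here). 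This reduces that piece to $C_{R,\e}^{2}\int_{|z|\le1}|z|^{2-N-2s}\,dz$, which is finite precisely because $s<1$, so that the exponent $N+2s-2$ is $<N$. Hence the inner integral is bounded by a constant depending only on $R$ and $\e$, giving $I\le C_{R,\e}\|u\|_{L^{2}}^{2}<\infty$. Combining the three estimates yields $[w]_{A_{\e}}<\infty$, hence $w\in\h$. I note that the compactness of $\supp u$ is used exactly to localize $y$, so that $A_{\e}$ and $V_{\e}$ — which need not be globally bounded — are kept under control.
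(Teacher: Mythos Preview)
Your proof is correct. The paper does not prove this lemma itself but imports it from \cite{AD}; your direct verification is the standard argument, and in fact the same splitting of the phase integral (using $|1-e^{\imath t}|^{2}\le\min\{4,t^{2}\}$ together with local boundedness of $A$) is exactly what the paper carries out later when estimating the term $X_{\e}$ in the proof of Lemma~\ref{AMlem1}.
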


\noindent
We also recall a fractional version of Lions lemma.
\begin{lem}\label{Lions}\cite{FQT}
	Let $s\in (0, 1)$ and $R>0$. If $(u_{n})$ is a bounded sequence in $H^{s}(\R^{N}, \R)$ and if 
	\begin{equation*}
	\lim_{n\rightarrow \infty} \sup_{y\in \R^{N}} \int_{B_{R}(y)} |u_{n}|^{2} dx=0,
	\end{equation*}
	then $u_{n}\rightarrow 0$ in $L^{r}(\R^{N}, \R)$ for all $r\in (2, 2^{*}_{s})$.
\end{lem}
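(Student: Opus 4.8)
The plan is to first prove the conclusion for one suitably chosen intermediate exponent and then recover the whole range $(2,2^{*}_{s})$ by interpolation. Throughout, $C$ denotes a positive constant, independent of $n$, which may change from line to line, and we set $\varepsilon_n:=\big(\sup_{y\in\R^N}\int_{B_R(y)}|u_n|^2\,dx\big)^{1/2}$, so that $\varepsilon_n\to 0$ by hypothesis. Let $p_0:=\frac{4(2^{*}_{s}-1)}{2^{*}_{s}}$; a direct computation using $(2^{*}_{s}-2)^2>0$ shows that $p_0\in(2,2^{*}_{s})$, and that the interpolation exponent $\tau$ determined by $\frac{1}{p_0}=\frac{1-\tau}{2}+\frac{\tau}{2^{*}_{s}}$ equals $\frac{2}{p_0}\in(0,1)$, so that $\tau p_0=2$ and $(1-\tau)p_0=p_0-2>0$. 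The core of the argument is to show that $u_n\to 0$ in $L^{p_0}(\R^N,\R)$; once this is established, the general case follows: since $(u_n)$ is bounded in $L^2(\R^N,\R)$ and, by the fractional Sobolev inequality ($\|v\|_{L^{2^{*}_{s}}(\R^N)}\le C[v]_s$ for $v\in H^s(\R^N,\R)$, see \cite{DPV}), also in $L^{2^{*}_{s}}(\R^N,\R)$, for $r\in(2,p_0]$ one interpolates between $L^2$ and $L^{p_0}$ and for $r\in[p_0,2^{*}_{s})$ between $L^{p_0}$ and $L^{2^{*}_{s}}$, obtaining $\|u_n\|_{L^r}\to 0$ in either case.

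To prove convergence in $L^{p_0}$, I would fix a covering $\{B_R(y_i)\}_{i\ge 1}$ of $\R^N$ by balls of radius $R$ enjoying the finite overlap property: there is an integer $M=M(N)\ge 1$ such that every $x\in\R^N$ belongs to at most $M$ of the balls $B_R(y_i)$. Such a covering is constructed in a standard way, e.g.\ by centering the balls at the points of a sufficiently fine lattice. On each ball, combining the continuous embedding $H^s(B_R(y_i),\R)\hookrightarrow L^{2^{*}_{s}}(B_R(y_i),\R)$ (see \cite{DPV}), whose constant is independent of $i$ by translation invariance, with the interpolation inequality between $L^2$ and $L^{2^{*}_{s}}$ in $L^{p_0}(B_R(y_i),\R)$, one gets
\[
\|u_n\|_{L^{p_0}(B_R(y_i))}\le \|u_n\|_{L^2(B_R(y_i))}^{1-\tau}\,\|u_n\|_{L^{2^{*}_{s}}(B_R(y_i))}^{\tau}\le C\,\|u_n\|_{L^2(B_R(y_i))}^{1-\tau}\big(\|u_n\|_{L^2(B_R(y_i))}+[u_n]_{H^s(B_R(y_i))}\big)^{\tau},
\]
where $[u_n]_{H^s(B_R(y_i))}^2=\iint_{B_R(y_i)\times B_R(y_i)}\frac{|u_n(x)-u_n(y)|^2}{|x-y|^{N+2s}}\,dx\,dy$. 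Raising this to the power $p_0$ and using $\tau p_0=2$ together with $\|u_n\|_{L^2(B_R(y_i))}^{(1-\tau)p_0}=\|u_n\|_{L^2(B_R(y_i))}^{p_0-2}\le\varepsilon_n^{p_0-2}$ (recall $p_0-2>0$), one obtains
\[
\int_{B_R(y_i)}|u_n|^{p_0}\,dx\le C\,\varepsilon_n^{p_0-2}\big(\|u_n\|_{L^2(B_R(y_i))}+[u_n]_{H^s(B_R(y_i))}\big)^{2}\le C\,\varepsilon_n^{p_0-2}\big(\|u_n\|_{L^2(B_R(y_i))}^{2}+[u_n]_{H^s(B_R(y_i))}^{2}\big).
\]

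The last step is to sum over $i$. Since every $x$ lies in at most $M$ of the balls, every pair $(x,y)$ lies in at most $M$ of the sets $B_R(y_i)\times B_R(y_i)$, hence $\sum_i[u_n]_{H^s(B_R(y_i))}^2\le M\,[u_n]_s^2$ and $\sum_i\|u_n\|_{L^2(B_R(y_i))}^2\le M\,\|u_n\|_{L^2(\R^N)}^2$. As the balls cover $\R^N$, summing the previous inequality and recalling that $(u_n)$ is bounded in $H^s(\R^N,\R)$ gives
\[
\int_{\R^N}|u_n|^{p_0}\,dx\le\sum_{i\ge 1}\int_{B_R(y_i)}|u_n|^{p_0}\,dx\le C\,M\,\varepsilon_n^{p_0-2}\,\|u_n\|_{H^s(\R^N)}^{2}\le C\,\varepsilon_n^{p_0-2},
\]
and the right-hand side tends to $0$ as $n\to\infty$; this is the claimed convergence in $L^{p_0}$, and the interpolation argument described above then completes the proof.

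There is no genuine obstacle here: the statement is essentially an exercise in interpolation and covering. The two points that have to be handled with a little care are the choice of the exponent $p_0$ (equivalently of $\tau$), made precisely so that after raising the local estimate to the power $p_0$ the full $H^s(B_R(y_i))$-norm appears squared, which is exactly what makes the sum over the covering convergent; and the use of the finite-overlap property to bound the sums of the local $L^2$-masses and of the local Gagliardo seminorms by the corresponding global quantities. Note that no compactness is invoked, so the fact that the $u_n$ are real-valued enters only through the use of the scalar fractional Sobolev inequality.
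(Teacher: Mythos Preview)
The paper does not prove this lemma; it is stated with a citation to \cite{FQT} and used as a black box. Your argument is correct and is essentially the classical Lions vanishing argument adapted to the fractional setting, which is the proof given in \cite{FQT}: interpolate locally between $L^{2}$ and $L^{2^{*}_{s}}$ on balls, choose the interpolation parameter so that the local $H^{s}$-norm appears to the second power, sum over a bounded-overlap covering, and finish by interpolation in $r$. The only points worth a second glance are both handled correctly: the choice of $p_{0}$ so that $\tau p_{0}=2$, and the observation that $\sum_{i}[u_{n}]_{H^{s}(B_{R}(y_{i}))}^{2}\le M[u_{n}]_{s}^{2}$ since $(x,y)\in B_{R}(y_{i})\times B_{R}(y_{i})$ forces $x\in B_{R}(y_{i})$, which occurs for at most $M$ indices.
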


\section{variational setting and the modified problem}
Using the change of variable $u(x)\mapsto u(\e x)$, we see that (\ref{P}) is equivalent to 
\begin{equation}\label{R}
(-\Delta)^{s}_{A_{\e}} u + V_{\e}(x)u =  f(|u|^{2})u  \quad\mbox{ in } \mathbb{R}^{N}.
\end{equation}
Fix $k>1$ and $a>0$ such that $f(a)=\frac{V_{0}}{k}$, and we define the function
$$
\tilde{f}(t)=
\begin{cases}
f(t) & \text{ if $t \leq a$,} \\
\frac{V_{0}}{k}    & \text{ if $t >a$}.
\end{cases}
$$ 
We introduce the  penalized nonlinearity $g: \R^{N}\times \R\rightarrow \R$ given by
$$
g(x, t)=\chi_{\Lambda}(x)f(t)+(1-\chi_{\Lambda}(x))\tilde{f}(t),
$$
where $\chi_{\Lambda}$ is the characteristic function of $\Lambda$, and  we set $G(x, t)=\int_{0}^{t} g(x, \tau)\, d\tau$.\\
By $(f_1)$-$(f_4)$, it follows that $g$ is a Carath\'eodory function satisfying the following properties:
\begin{compactenum}[($g_1$)]
\item $\lim_{t\rightarrow 0} g(x, t)=0$ uniformly in $x\in \R^{N}$;
\item $g(x, t)\leq f(t)$ for any $x\in \R^{N}$ and $t>0$;
\item $(i)$ $0< \frac{\theta}{2} G(x, t)\leq g(x, t)t$ for any $x\in \Lambda$ and $t>0$, \\
$(ii)$ $0\leq  G(x, t)\leq g(x, t)t\leq \frac{V_{0}}{k}t$ for any $x\in \Lambda^{c}$ and $t>0$;
\item for any $x\in \Lambda$, the function $t\mapsto g(x,t)$ is increasing in $(0, \infty)$, and for any $x\in \Lambda^{c}$, the function $t\mapsto g(x,t)$ is increasing in $(0, a)$.
\end{compactenum}
We used the notation $A^{c}=\R^{N}\setminus A$ for $A\subset \R^{N}$. Set $g_{\e}(x, t)=g(\e x, t)$ and we consider the following modified problem 
\begin{equation}\label{MPe}
\left\{
\begin{array}{ll}
(-\Delta)^{s}_{A_{\e}} u + V_{\e}(x)u =   g_{\e}(x, |u|^{2})u \quad \mbox{ in } \mathbb{R}^{N}, \\
u\in \h.
\end{array}
\right.
\end{equation}
Let us note that if $u$ is a solution of (\ref{MPe}) such that 
\begin{equation}\label{ue}
|u(x)|\leq \sqrt{a} \quad \mbox{ for all } x\in \Lambda^{c}_{\e},
\end{equation}
where $\Lambda_{\e}=\{x\in \R^{N}: \e x\in \Lambda\}$, then $u$ is also a solution of (\ref{R}).
In order to study weak solutions of \eqref{MPe}, we look for critical points of the functional $J_{\e}: \h\rightarrow \R$ defined as
\begin{align*}
J_{\e}(u)=\frac{1}{2}\|u\|_{\e}^{2}-\frac{1}{2}\int_{\R^{N}} G_{\e} (x, |u|^{2})\, dx.
\end{align*}
It is easy to check that $J_{\e}\in C^{1}(\h, \R)$ and that its differential is given by
\begin{align*}
\langle J_{\e}'(u), v\rangle =\langle u, v\rangle_{\e}-\Re\int_{\R^{N}} g_{\e} (x, |u|^{2})u\bar{v} \,dx \quad \mbox{ for all } u, v\in \h.
\end{align*}
Therefore, weak solutions to (\ref{MPe}) can be found as critical points of $J_{\e}$.
Since we are looking for multiple critical points of the functional $J_{\e}$, we shall consider it constrained to an appropriated subset of $\h$.
More precisely, we introduce the Nehari manifold associated with $J_{\e}$, namely
\begin{equation*}
\mathcal{N}_{\e}= \{u\in \h \setminus \{0\} : \langle J_{\e}'(u), u \rangle =0\}.
\end{equation*}
From the growth conditions of $g$ and Theorem \ref{Sembedding}, we see that 
for any fixed $u\in \mathcal{N}_{\e}$ and $\delta>0$ small enough
\begin{align*}
0=\langle J'_{\e}(u), u\rangle&=\|u\|_{\e}^{2}-\int_{\R^{N}} g_{\e} (x, |u|^{2})|u|^{2}\,dx\\
&\geq \|u\|_{\e}^{2}-\delta C_{1} \|u\|_{\e}^{2}-C_{\delta}\|u\|_{\e}^{\2} \\
&\geq C_{2}\|u\|^{2}_{\e}-C_{3}\|u\|_{\e}^{\2},
\end{align*}
so there exists $r>0$ independent of  $u$ such that 
\begin{equation}\label{uNr}
\|u\|_{\e}\geq r \quad\mbox{ for all } u\in \mathcal{N}_{\e}.
\end{equation}
Let us consider
\begin{equation*}
\mathcal{H}_{\e}^{+}= \{u\in \mathcal{H}_{\e} : |{\rm supp}(|u|) \cap \La_{\e}|>0\}\subset \mathcal{H}_{\e}. 
\end{equation*}
Let $\mathbb{S}_{\e}$ be the unit sphere of $\mathcal{H}_{\e}$ and we denote by $\mathbb{S}_{\e}^{+}= \mathbb{S}_{\e}\cap \mathcal{H}_{\e}^{+}$.
We observe that $\mathcal{H}_{\e}^{+}$ is open in $\mathcal{H}_{\e}$.
By the definition of $\mathbb{S}_{\e}^{+}$ and the fact that $\mathcal{H}_{\e}^{+}$ is open in $\mathcal{H}_{\e}$, it follows that $\mathbb{S}_{\e}^{+}$ is an incomplete $C^{1,1}$-manifold of codimension $1$, modelled on $\mathcal{H}_{\e}$ and contained in the open $\mathcal{H}_{\e}^{+}$; see \cite{FJ, SW}. Then, $\mathcal{H}_{\e}=T_{u} \mathbb{S}_{\e}^{+} \oplus \mathbb{R} u$ for each $u\in \mathbb{S}_{\e}^{+}$, where
\begin{equation*}
T_{u} \mathbb{S}_{\e}^{+}= \{v \in \mathcal{H}_{\e} : \langle u, v\rangle_{\e}=0\}.
\end{equation*} 

Next we prove that $J_{\e}$ possesses a mountain pass geometry \cite{AR}.
\begin{lem}\label{MPG}
\begin{compactenum}[$(i)$]
The functional $J_{\e}$ satisfies the following conditions:
\item $J_{\e}(0)=0$;
\item there exist $\alpha, \rho>0$ such that $J_{\e}(u)\geq \alpha$ for any $u\in \h$ such that $\|u\|_{\e}=\rho$;
\item there exists $e\in \h$ such that $\|e\|_{\e}>\rho$ and $J_{\e}(e)<0$.
\end{compactenum}
\end{lem}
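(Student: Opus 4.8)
The plan is to verify the three mountain-pass conditions directly from the structure of $J_{\e}$ and the properties $(g_1)$--$(g_3)$ of the penalized nonlinearity, together with the embedding in Theorem \ref{Sembedding}. Part $(i)$ is immediate: since $G_{\e}(x,0)=0$ for every $x$ (because $g(x,\cdot)$ vanishes at $0$ by $(g_1)$, or simply $F(0)=0$), we have $J_{\e}(0)=\frac12\|0\|_{\e}^{2}-\frac12\int_{\R^{N}}G_{\e}(x,0)\,dx=0$.

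For part $(ii)$, the idea is to show that $\int_{\R^{N}}G_{\e}(x,|u|^{2})\,dx$ is a higher-order quantity near $u=0$ relative to $\|u\|_{\e}^{2}$. Using $(g_1)$ and $(g_2)$ together with $(f_2)$, for any $\delta>0$ there is $C_{\delta}>0$ with $g(x,t)\le \delta + C_{\delta}t^{\frac{q-2}{2}}$, and hence $G(x,t)\le \delta t + C_{\delta}t^{q/2}$, which gives
\begin{equation*}
\int_{\R^{N}}G_{\e}(x,|u|^{2})\,dx \le \delta \int_{\R^{N}}|u|^{2}\,dx + C_{\delta}\int_{\R^{N}}|u|^{q}\,dx \le \delta C_{1}\|u\|_{\e}^{2} + C_{\delta}C_{2}\|u\|_{\e}^{q},
\end{equation*}
where the last step uses the continuous embedding $\h\hookrightarrow L^{2}(\R^{N},\C)$ and $\h\hookrightarrow L^{q}(\R^{N},\C)$ from Theorem \ref{Sembedding} (here $q\in(2,\2)$). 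Therefore
\begin{equation*}
J_{\e}(u)\ge \frac12\|u\|_{\e}^{2} - \frac{\delta C_{1}}{2}\|u\|_{\e}^{2} - \frac{C_{\delta}C_{2}}{2}\|u\|_{\e}^{q} = \left(\frac12 - \frac{\delta C_{1}}{2}\right)\|u\|_{\e}^{2} - \frac{C_{\delta}C_{2}}{2}\|u\|_{\e}^{q}.
\end{equation*}
Choosing $\delta$ so small that $\frac12-\frac{\delta C_1}{2}>\frac14$ and then choosing $\rho>0$ small enough that $\frac{C_{\delta}C_2}{2}\rho^{q-2}<\frac18$, we obtain $J_{\e}(u)\ge \frac18\rho^{2}=:\alpha>0$ whenever $\|u\|_{\e}=\rho$.

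For part $(iii)$, fix any $u\in\h$ with $|\mathrm{supp}(|u|)\cap\Lambda_{\e}|>0$, say a nonnegative $C^{\infty}_{c}$ bump composed with a phase, so that $u\in\h$ and the set where $\e x\in\Lambda$ carries positive mass of $|u|$. Consider $t\mapsto J_{\e}(tu)$ for $t>0$. On the region $\Lambda_{\e}$ the assumption $(g_3)(i)$ yields, by the standard integration of the differential inequality $\frac{\theta}{2}G(x,r)\le g(x,r)r$, the bound $G(x,r)\ge c_{1}r^{\theta/2}-c_{2}$ for $r\ge 0$ with constants $c_1>0$, $c_2\ge 0$ (for $x\in\Lambda$); hence
\begin{equation*}
\int_{\R^{N}}G_{\e}(x,|tu|^{2})\,dx \ge \int_{\Lambda_{\e}}G_{\e}(x,t^{2}|u|^{2})\,dx \ge c_{1}t^{\theta}\int_{\Lambda_{\e}}|u|^{\theta}\,dx - c_{2}|\Lambda_{\e}\cap\mathrm{supp}(|u|)|.
\end{equation*}
Since $\int_{\Lambda_{\e}}|u|^{\theta}\,dx>0$ and $\theta>2$, we get $J_{\e}(tu)\le \frac{t^{2}}{2}\|u\|_{\e}^{2}-\frac{c_1}{2}t^{\theta}\int_{\Lambda_{\e}}|u|^{\theta}\,dx + C \to -\infty$ as $t\to\infty$. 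Thus $e:=t_{0}u$ for $t_{0}$ large enough satisfies $\|e\|_{\e}>\rho$ and $J_{\e}(e)<0$. The mildest technical point is making sure such a $u$ lies in $\h$; this is handled by Lemma \ref{aux}, taking $u=e^{\imath A_{\e}(0)\cdot x}\varphi$ with $\varphi\in C^{\infty}_{c}(\R^{N},\R)$ supported in $\Lambda_{\e}$, which also preserves $|u|=|\varphi|$. The main obstacle, such as it is, is purely bookkeeping: tracking the constants from $(g_1)$--$(g_3)$ and the embedding constants so that $(ii)$ gives a uniform $\rho$ and $\alpha$ (independent of the particular $u$), while for $(iii)$ it suffices to exhibit a single test direction; the diamagnetic inequality (Lemma \ref{DI}) is not even needed here since the argument only uses $|u|$ on the nonlinear side and $\|u\|_{\e}$ directly on the quadratic side.
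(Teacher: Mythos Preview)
Your proof is correct and follows essentially the same approach as the paper's own argument. The only cosmetic differences are that in $(ii)$ you bound $G_{\e}(x,|u|^{2})$ with the exponent $q$ while the paper uses $2^{*}_{s}$, and in $(iii)$ you are more explicit about constructing the test direction via Lemma~\ref{aux}; neither changes the substance of the proof.
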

\begin{proof}
Clearly, $J_{\e}(0)=0$.
By $(g_1)$ and $(g_2)$, for all $\delta>0$ there exists $C_{\delta}>0$ such that 
$$
|G_{\e} (x, t^2)|\leq \delta |t|^{2}+C_{\delta}|t|^{\2} \quad \mbox{ for every } (x, t)\in \R^{N}\times \R.
$$
This fact combined with Theorem \ref{Sembedding} implies that for all $u\in \h$
$$
J_{\e}(u)\geq \left(\frac{1}{2}-\delta C\right)\|u\|_{\e}^{2}-C'_{\delta} \|u\|^{\2}_{\e}
$$
which implies that $(ii)$ holds true.
Concerning $(iii)$, for each $u\in \mathcal{H}^{+}_{\e}$ and $t>0$, we get
\begin{align*}
J_{\e}(tu)&\leq \frac{t^{2}}{2} \|u\|^{2}_{\e}-\frac{1}{2}\int_{\Lambda_{\e}} F(t^{2}|u|^{2})\, dx \\
&\leq \frac{t^{2}}{2}\|u\|^{2}_{\e}-Ct^{\theta} \int_{\Lambda_{\e}} |u|^{\theta}\, dx+C'|{\rm supp}(|u|)\cap \Lambda_{\e}|
\end{align*}
where we used $(g_2)$ and $(f_3)$. Since $\theta>2$ we see that $J_{\e}(tu)\rightarrow -\infty$ as $t\rightarrow \infty$. 
\end{proof}

Since $f$ is merely continuous, the next results will be fundamental to overcome the non-differentiability of $\mathcal{N}_{\e}$ and the incompleteness of $\mathbb{S}_{\e}^{+}$.
\begin{lem}\label{lem2.3MAW}
Assume that $(V_1)$-$(V_2)$ and $(f_1)$-$(f_4)$ hold. Then, 
\begin{compactenum}[$(i)$]
\item For each $u\in \mathcal{H}_{\e}^{+}$, let $h:\mathbb{R}_{+}\rightarrow \mathbb{R}$ be defined by $h_{u}(t)= J_{\e}(tu)$. Then, there is a unique $t_{u}>0$ such that 
\begin{align*}
&h'_{u}(t)>0 \mbox{ in } (0, t_{u}),\\
&h'_{u}(t)<0 \mbox{ in } (t_{u}, \infty).
\end{align*}
\item There exists $\tau>0$ independent of $u$ such that $t_{u}\geq \tau$ for any $u\in \mathbb{S}_{\e}^{+}$. Moreover, for each compact set $\mathbb{K}\subset \mathbb{S}_{\e}^{+}$ there is a positive constant $C_{\mathbb{K}}$ such that $t_{u}\leq C_{\mathbb{K}}$ for any $u\in \mathbb{K}$.
\item The map $\hat{m}_{\e}: \mathcal{H}_{\e}^{+}\rightarrow \mathcal{N}_{\e}$ given by $\hat{m}_{\e}(u)= t_{u}u$ is continuous and $m_{\e}= \hat{m}_{\e}|_{\mathbb{S}_{\e}^{+}}$ is a homeomorphism between $\mathbb{S}_{\e}^{+}$ and $\mathcal{N}_{\e}$. Moreover, $m_{\e}^{-1}(u)=\frac{u}{\|u\|_{\e}}$.
\item If there is a sequence $(u_{n})\subset \mathbb{S}_{\e}^{+}$ such that ${\rm dist}(u_{n}, \partial \mathbb{S}_{\e}^{+})\rightarrow 0$, then $\|m_{\e}(u_{n})\|_{\e}\rightarrow \infty$ and $J_{\e}(m_{\e}(u_{n}))\rightarrow \infty$.
\end{compactenum}

\end{lem}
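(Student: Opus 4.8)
The plan is to follow the now-standard Szulkin--Weth machinery for the generalized Nehari manifold, adapting the arguments to our magnetic functional $J_{\e}$. The key point throughout is that although $f$ is merely continuous (so $\mathcal N_{\e}$ need not be $C^1$), the monotonicity assumption $(f_4)$ and its consequences $(g_3)$--$(g_4)$ are enough to give the fibering maps $h_u(t)=J_{\e}(tu)$ a unique positive critical point. The diamagnetic inequality (Lemma \ref{DI}) and the embeddings of $\h$ (Theorem \ref{Sembedding}) will be used to control the nonlinear term, exactly as $f$-growth is controlled in the non-magnetic case \cite{A1}.

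\medskip

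\emph{Step (i): unique maximum of the fibering map.} For $u\in\mathcal H_{\e}^{+}$ write $h_u(t)=J_{\e}(tu)=\frac{t^2}{2}\|u\|_{\e}^2-\frac12\int_{\R^N}G_{\e}(x,t^2|u|^2)\,dx$. From Lemma \ref{MPG} (its proof applies verbatim to $tu$) we get $h_u(t)>0$ for $t$ small and $h_u(t)\to-\infty$ as $t\to\infty$, so $h_u$ has a global maximum at some $t_u>0$ with $h_u'(t_u)=0$, i.e.\ $t_u u\in\mathcal N_{\e}$. For uniqueness, observe that
\[
h_u'(t)=t\Big(\|u\|_{\e}^2-\int_{\R^N}g_{\e}(x,t^2|u|^2)|u|^2\,dx\Big),
\]
and the bracket is the function $t\mapsto \|u\|_{\e}^2-\int g_{\e}(x,t^2|u|^2)|u|^2\,dx$. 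Because $u\in\mathcal H_{\e}^{+}$, the set $\{x:|u(x)|>0\}\cap\Lambda_{\e}$ has positive measure, and on $\Lambda_{\e}$ the map $t\mapsto g_{\e}(x,t^2|u|^2)$ is strictly increasing by $(g_4)$, while on $\Lambda_{\e}^c$ it is nondecreasing up to the cut-off and then constant; hence $t\mapsto\int g_{\e}(x,t^2|u|^2)|u|^2\,dx$ is strictly increasing on $(0,\infty)$ (strictly, thanks to the positive-measure overlap with $\Lambda_{\e}$). Therefore the bracket is strictly decreasing, so it has at most one zero, which forces $h_u'>0$ on $(0,t_u)$ and $h_u'<0$ on $(t_u,\infty)$. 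A standard $(g_1)$--$(g_2)$ estimate as in Lemma \ref{MPG}$(ii)$ shows the bracket is positive near $t=0$, confirming $t_u>0$.

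\medskip

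\emph{Step (ii): uniform bounds on $t_u$.} If $u\in\mathbb S_{\e}^{+}$, then $\|u\|_{\e}=1$ and $t_u u\in\mathcal N_{\e}$, so $\|t_u u\|_{\e}=t_u\geq r$ by \eqref{uNr}; thus $\tau:=r$ works. For the upper bound on a compact $\mathbb K\subset\mathbb S_{\e}^{+}$, suppose $u_n\in\mathbb K$ with $t_{u_n}\to\infty$. Up to a subsequence $u_n\to u$ in $\h$ with $u\in\mathbb S_{\e}^{+}$, in particular $|\mathrm{supp}(|u|)\cap\Lambda_{\e}|>0$. From $h_{u_n}(t_{u_n})\geq h_{u_n}(t)$ for every fixed $t$, or more directly from the bound in the proof of Lemma \ref{MPG}$(iii)$,
\[
0\le J_{\e}(t_{u_n}u_n)\le \frac{t_{u_n}^2}{2}-C t_{u_n}^{\theta}\!\int_{\Lambda_{\e}}|u_n|^{\theta}\,dx+C',
\]
and since $\int_{\Lambda_{\e}}|u_n|^{\theta}\to\int_{\Lambda_{\e}}|u|^{\theta}>0$ while $\theta>2$, the right-hand side tends to $-\infty$, a contradiction. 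Hence $t_u\le C_{\mathbb K}$.

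\medskip

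\emph{Steps (iii)--(iv): the homeomorphism and blow-up at the boundary.} Continuity of $\hat m_{\e}(u)=t_u u$ on $\mathcal H_{\e}^{+}$ follows from the uniqueness in (i) together with a standard closed-graph argument using (ii): if $u_n\to u$ in $\mathcal H_{\e}^{+}$, then $t_{u_n}$ stays bounded and any limit point $t_0$ of $t_{u_n}$ satisfies $t_0 u\in\mathcal N_{\e}$, whence $t_0=t_u$. The map $m_{\e}=\hat m_{\e}|_{\mathbb S_{\e}^{+}}$ is a bijection onto $\mathcal N_{\e}$ with continuous inverse $u\mapsto u/\|u\|_{\e}$, since for $u\in\mathcal N_{\e}$ one has $t_{u/\|u\|_{\e}}=\|u\|_{\e}$ by uniqueness; this gives (iii). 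For (iv), let $(u_n)\subset\mathbb S_{\e}^{+}$ with $\mathrm{dist}(u_n,\partial\mathbb S_{\e}^{+})\to0$; then $\chi_{\Lambda_{\e}}|u_n|\to0$ in $L^r$ for $r\in[2,\2)$ (because the distance to the complement of $\mathcal H_{\e}^{+}$ controls $\|\chi_{\Lambda_{\e}}|u_n|\|_{\e}$, and one invokes Theorem \ref{Sembedding} and Lemma \ref{DI}). Using $(g_3)(ii)$ on $\Lambda_{\e}^c$ and $(g_2)$ with $(f_2)$ on $\Lambda_{\e}$, for any $t>0$
\[
\liminf_{n\to\infty}\int_{\R^N}G_{\e}(x,t^2|u_n|^2)\,dx\le \frac{V_0}{k}\,t^2\,\limsup_{n\to\infty}\|u_n\|_{L^2(\Lambda_{\e}^c)}^2\le \frac{t^2}{k},
\]
(the $\Lambda_{\e}$-part vanishing), so that $\liminf_n J_{\e}(tu_n)\ge \frac{t^2}{2}(1-\tfrac1k)>0$ for all $t$; since $J_{\e}(m_{\e}(u_n))=h_{u_n}(t_{u_n})\ge h_{u_n}(t)=J_{\e}(tu_n)$ for every fixed $t$, letting $n\to\infty$ and then $t\to\infty$ gives $J_{\e}(m_{\e}(u_n))\to\infty$; combined with $J_{\e}(v)\le\frac12\|v\|_{\e}^2$ on $\mathcal N_{\e}$ this yields $\|m_{\e}(u_n)\|_{\e}\to\infty$.

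\medskip

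The main obstacle is verifying the two delicate points where the magnetic structure enters: first, that $t\mapsto\int_{\R^N}g_{\e}(x,t^2|u|^2)|u|^2\,dx$ is \emph{strictly} increasing for $u\in\mathcal H_{\e}^{+}$ (so that $t_u$ is unique and the sign of $h_u'$ is controlled) — this rests solely on $(g_4)$ plus the defining positive-measure overlap of $\mathrm{supp}(|u|)$ with $\Lambda_{\e}$, and does not actually see the magnetic phase; and second, the boundary blow-up in (iv), where one must show $\mathrm{dist}(u,\partial\mathbb S_{\e}^{+})\to0$ forces the $L^2(\Lambda_{\e})$-mass of $|u_n|$ to vanish — here the diamagnetic inequality $[|u|]\le[u]_{A_{\e}}$ is used to pass from the magnetic norm to an estimate on $|u_n|$, exactly as in \cite{SW,A1}. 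Everything else is a routine transcription of the Szulkin--Weth scheme, the growth control coming from Theorem \ref{Sembedding}.
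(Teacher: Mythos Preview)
Your proof is correct and follows the same Szulkin--Weth scheme as the paper; the only cosmetic differences are that you argue uniqueness in (i) via strict monotonicity of $t\mapsto\int g_\e(x,t^2|u|^2)|u|^2\,dx$ (the paper does the equivalent two-point contradiction with $t_1>t_2$), and in (ii) you invoke \eqref{uNr} directly for the lower bound rather than re-deriving it. Two small points worth tightening: in (iii) you implicitly use $\mathcal N_\e\subset\mathcal H_\e^+$ for $m_\e^{-1}$ to land in $\mathbb S_\e^+$, which the paper checks explicitly via $(g_3)$-(ii); and in (iv) the diamagnetic inequality is not actually needed, since Theorem~\ref{Sembedding} already yields $\|u_n-v\|_{L^p(\Lambda_\e)}\le C\|u_n-v\|_\e$ for $v\in\partial\mathbb S_\e^+$.
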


\begin{proof}
$(i)$ We note that $h_{u}\in C^{1}(\mathbb{R}_{+}, \mathbb{R})$, and arguing as in the proof of Lemma \ref{MPG} it is easy to verify that $h_{u}(0)=0$, $h_{u}(t)>0$ for $t>0$ small enough and $h_{u}(t)<0$ for $t>0$ sufficiently  large. Therefore, $\max_{t\geq 0} h_{u}(t)$ is achieved at some $t_{u}>0$ verifying $h'_{u}(t_{u})=0$ and $t_{u}u\in \mathcal{N}_{\e}$. 
Next we claim the uniqueness of such a $t_{u}$. Let $t_{1}>t_{2}>0$ be such that $h'_{u}(t_{i})=0$ for $i=1, 2$, that is $\|u\|^{2}_{\e}=\int_{\R^{N}} g_{\e}(x, |t_{i}u|^{2})|u|^{2}\, dx$ for $i=1, 2$. By using the definition of $g$, $(f_4)$, $(g_4)$ and $u\in \mathcal{H}^{+}_{\e}$, we see that
\begin{align*}
0&=\int_{\R^{N}} \left[g_{\e}(x, |t_{1}u|^{2})|u|^{2}-g_{\e}(x, |t_{2}u|^{2})|u|^{2}\right]\, dx\\
&=\int_{\Lambda^{c}_{\e}} \left[g_{\e}(x, |t_{1}u|^{2})|u|^{2}-g_{\e}(x, |t_{2}u|^{2})|u|^{2}\right]\, dx+\int_{\Lambda_{\e}} \left[f(|t_{1}u|^{2})|u|^{2}-f(|t_{2}u|^{2})|u|^{2}\right]\, dx\\
&>\int_{\Lambda^{c}_{\e}\cap \{ |t_{2}u|^{2}\leq a<|t_{1}u|^{2} \}}\left[\frac{V_{0}}{k}|u|^{2}- f(|t_{2}u|^{2})|u|^{2}\right]\, dx+\int_{\Lambda^{c}_{\e}\cap\{|t_{1}u|^{2}\leq a\}} \left[f(|t_{1}u|^{2})|u|^{2}-f(|t_{2}u|^{2})|u|^{2}\right]\, dx\geq 0
\end{align*}
which gives a contradiction.

\noindent
$(ii)$ Let $u\in \mathbb{S}_{\e}^{+}$. By $(i)$ there exists $t_{u}>0$ such that $h_{u}'(t_{u})=0$, or equivalently
\begin{equation*}
t_{u}= \int_{\mathbb{R}^{N}} g_{\e}(x, |t_{u}u|^{2}) t_{u}|u|^{2} \,dx. 
\end{equation*}
By assumptions $(g_{1})$ and $(g_{2})$, given $\xi>0$ there exists a positive constant $C_{\xi}$ such that
\begin{equation}\label{BERTSCH}
|g_{\e}(x, t)|\leq \xi + C_{\xi} |t|^{\frac{\2-2}{2}}, \quad \mbox{ for every } (x, t)\in \R^{N}\times \mathbb{R}.
\end{equation}
Thus \eqref{BERTSCH} and  Theorem \ref{Sembedding} yield
\begin{align*}
t_{u}\leq \xi t_{u}C_{1} + C_{\xi} C_{2} t^{\2-1}_{u}. 
\end{align*}
Taking $\xi>0$ sufficiently small, we obtain that there exists $\tau>0$, independent of $u$, such that $t_{u}\geq \tau$. Now, let $\mathbb{K}\subset \mathbb{S}_{\e}^{+}$ be a compact set and we show that $t_{u}$ can be estimated from above by a constant depending on $\mathbb{K}$. Assume by contradiction that there exists a sequence $(u_{n})\subset \mathbb{K}$ such that $t_{n}=t_{u_{n}}\rightarrow \infty$. Therefore, there exists $u\in \mathbb{K}$ such that $u_{n}\rightarrow u$ in $\mathcal{H}_{\e}$. From $(iii)$ in Lemma \ref{MPG} we get 
\begin{equation}\label{cancMAW}
J_{\e}(t_{n}u_{n})\rightarrow -\infty. 
\end{equation}
Fix $v\in \mathcal{N}_{\e}$. Then, using the fact that $\langle J_{\e}'(v), v \rangle=0$, and assumption $(g_{3})$, we can infer
\begin{align}\label{cancanMAW}
J_{\e}(v)&= J_{\e}(v)- \frac{1}{\theta} \langle J_{\e}'(v), v \rangle \nonumber\\
&=\left(\frac{1}{2}-\frac{1}{\theta}\right) \|v\|_{\e}^{2}+ \int_{\mathbb{R}^{N}} \frac{1}{\theta} g_{\e}(x, |v|^{2})|v|^{2}- \frac{1}{2} G_{\e}(x, |v|^{2})]\, dx \nonumber\\
&\geq \left(\frac{1}{2}-\frac{1}{\theta}\right) \|v\|_{\e}^{2}+ \frac{1}{\theta}\int_{\Lambda_{\e}^{c}} [g_{\e}(x, |v|^{2})|v|^{2}- \frac{\theta}{2} G_{\e}(x, |v|^{2})]\, dx \nonumber \\
&\geq \left(\frac{\theta-2}{2\theta}\right)\|v\|_{\e}^{2} -\left(\frac{\theta-2}{2\theta}\right) \frac{1}{k}\int_{\Lambda^{c}_{\e}} V(\e x) |v|^{2}\,dx \nonumber\\
& \geq \left(\frac{\theta-2}{2\theta}\right) \left(1- \frac{1}{k}\right) \|v\|_{\e}^{2}.
\end{align}
Taking into account that $(t_{u_{n}}u_{n})\subset \mathcal{N}_{\e}$ and $\|t_{u_{n}}u_{n}\|_{\e}=t_{u_{n}}\ri \infty$, from \eqref{cancanMAW} we deduce that \eqref{cancMAW} does not hold. 

\noindent
$(iii)$ First, we note that $\hat{m}_{\e}$, $m_{\e}$ and $m_{\e}^{-1}$ are well defined. Indeed, by $(i)$, for each $u\in \mathcal{H}_{\e}^{+}$ there exists a unique $m_{\e}(u)\in \mathcal{N}_{\e}$. On the other hand, if $u\in \mathcal{N}_{\e}$ then $u\in \mathcal{H}_{\e}^{+}$. Otherwise, if $u\notin \mathcal{H}_{\e}^{+}$, we have
\begin{equation*}
|{\rm supp} (|u|) \cap \Lambda_{\e}|=0, 
\end{equation*}
which together with $(g_3)$-$(ii)$ gives 
\begin{align}\label{ter1MAW}
0<\|u\|_{\e}^{2}&= \int_{\mathbb{R}^{N}} g_{\e}(x, |u|^{2}) |u|^{2} \, dx \nonumber \\
&= \int_{\La_{\e}} g_{\e}(x, |u|^{2}) |u|^{2} \, dx + \int_{\La^{c}_{\e}} g_{\e}(x, |u|^{2}) |u|^{2} \, dx \nonumber \\
&= \int_{\La^{c}_{\e}} g_{\e}(x, |u|^{2}) |u|^{2} \, dx \nonumber \\
&\leq \frac{1}{k} \int_{\La^{c}_{\e}} V_{\e}(x) |u|^{2} \,dx \leq \frac{1}{k} \|u\|_{\e}^{2}.  
\end{align}
Using  \eqref{ter1MAW} we get
\begin{align*}
0<\|u\|_{\e}^{2} \leq \frac{1}{k} \|u\|_{\e}^{2}
\end{align*}
and this leads to a contradiction because $k>1$. 
Consequently, $m_{\e}^{-1}(u)= \frac{u}{\|u\|_{\e}}\in \mathbb{S}_{\e}^{+}$, $m_{\e}^{-1}$ is well defined and continuous. \\
Let $u\in \mathbb{S}_{\e}^{+}$. Then,
\begin{align*}
m_{\e}^{-1}(m_{\e}(u))= m_{\e}^{-1}(t_{u}u)= \frac{t_{u}u}{\|t_{u}u\|_{\e}}= \frac{u}{\|u\|_{\e}}=u
\end{align*}
from which $m_{\e}$ is a bijection. Now, our aim is to prove that $\hat{m}_{\e}$ is a continuous function. 
Let $(u_{n})\subset \mathcal{H}_{\e}^{+}$ and $u\in \mathcal{H}_{\e}^{+}$ such that $u_{n}\rightarrow u$ in $\mathcal{H}_{\e}^{+}$. 
Hence,
\begin{equation*}
\frac{u_{n}}{\|u_{n}\|_{\e}}\rightarrow \frac{u}{\|u\|_{\e}} \quad \mbox{ in } \mathcal{H}_{\e}.
\end{equation*} 
Set $v_{n}= \frac{u_{n}}{\|u_{n}\|_{\e}}$ and $t_{n}=t_{v_{n}}$. By $(ii)$ there exists $t_{0}>0$ such that $t_{n}\rightarrow t_{0}$. Since $t_{n}v_{n}\in \mathcal{N}_{\e}$ and $\|v_{n}\|_{\e}=1$, we have 
\begin{equation*}
t^{2}_{n}= \int_{\mathbb{R}^{N}} g_{\e}(x, |t_{n}v_{n}|^{2}) |t_{n}v_{n}|^{2} \, dx. 
\end{equation*}
Passing to the limit as $n\rightarrow \infty$ we obtain
\begin{equation*}
t^{2}_{0}= \int_{\mathbb{R}^{N}} g_{\e}(x, |t_{0}v|^{2}) |t_{0} v|^{2}\, dx, 
\end{equation*}
where $v= \frac{u}{\|u\|_{\e}}$, which implies that $t_{0}v\in \mathcal{N}_{\e}$. By $(i)$ we deduce that $t_{v}= t_{0}$, and this shows that
\begin{equation*}
\hat{m}_{\e}(u_{n})= \hat{m}_{\e}\left(\frac{u_{n}}{\|u_{n}\|_{\e}}\right)\rightarrow \hat{m}_{\e}\left(\frac{u}{\|u\|_{\e}}\right)=\hat{m}_{\e}(u) \quad \mbox{ in } \mathcal{H}_{\e}.  
\end{equation*}
Therefore $\hat{m}_{\e}$ and $m_{\e}$ are continuous functions. 

\noindent
$(iv)$ Let $(u_{n})\subset \mathbb{S}_{\e}^{+}$ be such that ${\rm dist}(u_{n}, \partial \mathbb{S}_{\e}^{+})\rightarrow 0$. Observe that, by the Sobolev inequality and $(V_1)$-$(V_2)$, for each $p\in [2, 2^{*}_{s}]$ there exists $\kappa_{p}>0$ such that it holds
\begin{align*}
\|u_{n}\|^{p}_{L^{p}(\La_{\e})} &\leq \inf_{v\in \partial \mathbb{S}_{\e}^{+}} \|u_{n}- v\|^{p}_{L^{p}(\La_{\e})}\\
&\leq \kappa_{p} \inf_{v\in \partial \mathbb{S}_{\e}^{+}} \|u_{n}- v\|^{p}_{\e} \\
&\leq \kappa_{p} \, {\rm dist}(u_{n}, \partial \mathbb{S}_{\e}^{+})^{p}, \quad \mbox{ for all } n\in \mathbb{N}.
\end{align*}
Then, by the above inequality, $(g_{1})$, $(g_{2})$ and $(g_{3})$-$(ii)$, we can infer that, for all $t>0$,
\begin{align*}
\frac{1}{2}\int_{\mathbb{R}^{N}} G_{\e}(x, |tu_{n}|^{2})\, dx &= \frac{1}{2}\int_{\La^{c}_{\e}} G_{\e}(x, |tu_{n}|^{2})\, dx + \frac{1}{2}\int_{\La_{\e}} G_{\e}(x, |tu_{n}|^{2})\, dx\\
&\leq \frac{t^{2}}{2k} \int_{\La^{c}_{\e}} V(\e x) |u_{n}|^{2} \,dx+ \int_{\La_{\e}} \frac{1}{2}F(|tu_{n}|^{2})\, dx\\
&\leq \frac{t^{2}}{2k} \|u_{n}\|_{\e}^{2} + C_{1}t^{2} \int_{\La_{\e}} |u_{n}|^{2} \,dx+ C_{2} t^{\2} \int_{\La_{\e}} |u_{n}|^{\2} \,dx\\
&\leq \frac{t^{2}}{2k}+ C_{1}\kappa_{2} t^{2} {\rm dist}(u_{n}, \partial \mathbb{S}_{\e}^{+})^{2} + C_{2}\kappa_{\2} t^{\2} {\rm dist}(u_{n}, \partial \mathbb{S}_{\e}^{+})^{\2}
\end{align*}
from which
\begin{equation}\label{ter2MAW}
\frac{1}{2}\limsup_{n\rightarrow \infty} \int_{\mathbb{R}^{N}} G_{\e}(x, |tu_{n}|^{2})\, dx \leq \frac{t^{2}}{2k} \quad \mbox{ for all } t>0. 
\end{equation}
Bearing in mind the definition of $m_{\e}(u_{n})$, and by using \eqref{ter2MAW}, we have
\begin{align*}
\liminf_{n\rightarrow \infty} J_{\e}(m_{\e}(u_{n}))&\geq \liminf_{n\rightarrow \infty} J_{\e}(tu_{n})\\
&= \liminf_{n\rightarrow \infty} \left[ \frac{1}{2} \|tu_{n}\|_{\e}^{2} - \frac{1}{2}\int_{\mathbb{R}^{N}} G_{\e}(x, |tu_{n}|^{2})\, dx\right]\\
&\geq \left(\frac{1}{2} - \frac{1}{2k} \right)t^{2} \quad \mbox{ for all } t>0.
\end{align*}
Recalling that $k>1$ we get
\begin{equation*}
\lim_{n\rightarrow \infty} J_{\e}(m_{\e}(u_{n}))= \infty. 
\end{equation*}
Moreover, by the definition of $J_{\e}$, we see that
\begin{equation*}
\liminf_{n\ri \infty} \frac{1}{2}\|m_{\e}(u_{n})\|^{2}_{\e}\geq \liminf_{n\ri \infty} J_{\e}(m_{\e}(u_{n}))=\infty.
\end{equation*}
This completes the proof of the lemma.
\end{proof}

\noindent
Let us consider the maps 
\begin{equation*}
\hat{\psi}_{\e}: \mathcal{H}_{\e}^{+} \rightarrow \mathbb{R} \quad \mbox{ and } \quad \psi_{\e}: \mathbb{S}_{\e}^{+}\rightarrow \mathbb{R}, 
\end{equation*}
by setting $\hat{\psi}_{\e}(u)= J_{\e}(\hat{m}_{\e}(u))$ and $\psi_{\e}=\hat{\psi}_{\e}|_{\mathbb{S}_{\e}^{+}}$. 

The next result is a direct consequence of Lemma \ref{lem2.3MAW} and Corollary $10$ in \cite{SW}. 
\begin{prop}\label{prop2.1MAW}
Assume that $(V_{1})$-$(V_{2})$ and $(f_{1})$-$(f_{4})$ hold. Then, 
\begin{compactenum}[$(a)$]
\item $\hat{\psi}_{\e} \in C^{1}(\mathcal{H}_{\e}^{+}, \mathbb{R})$ and 
\begin{equation*}
\langle \hat{\psi}_{\e}'(u), v\rangle = \frac{\|\hat{m}_{\e}(u)\|_{\e}}{\|u\|_{\e}} \langle J_{\e}'(\hat{m}_{\e}(u)), v\rangle, 
\end{equation*}
for every $u\in \mathcal{H}_{\e}^{+}$ and $v\in \mathcal{H}_{\e}$. 
\item $\psi_{\e} \in C^{1}(\mathbb{S}_{\e}^{+}, \mathbb{R})$ and 
\begin{equation*}
\langle \psi_{\e}'(u), v \rangle = \|m_{\e}(u)\|_{\e} \langle J_{\e}'(m_{\e}(u)), v\rangle, 
\end{equation*}
for every $v\in T_{u}\mathbb{S}_{\e}^{+}$.
\item If $(u_{n})$ is a Palais-Smale sequence for $\psi_{\e}$, then $(m_{\e}(u_{n}))$ is a Palais-Smale sequence for $J_{\e}$. If $(u_{n})\subset \mathcal{N}_{\e}$ is a bounded Palais-Smale sequence for $J_{\e}$, then $(m_{\e}^{-1}(u_{n}))$ is a Palais-Smale sequence for  $\psi_{\e}$. 
\item $u$ is a critical point of $\psi_{\e}$ if and only if $m_{\e}(u)$ is a nontrivial critical point for $J_{\e}$. Moreover, the corresponding critical values coincide and 
\begin{equation*}
\inf_{u\in \mathbb{S}_{\e}^{+}} \psi_{\e}(u)= \inf_{u\in \mathcal{N}_{\e}} J_{\e}(u).  
\end{equation*}
\end{compactenum}
\end{prop}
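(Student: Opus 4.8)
The plan is to deduce all four statements from the abstract critical point scheme of Szulkin and Weth \cite{SW}, once we have checked that $J_{\e}$, the open set $\mathcal{H}_{\e}^{+}$ and the Nehari manifold $\mathcal{N}_{\e}$ fit its framework. Lemma \ref{MPG} shows that $J_{\e}$ has the mountain pass geometry, and Lemma \ref{lem2.3MAW} supplies exactly the structural hypotheses required in \cite{SW}: for each $u\in\mathcal{H}_{\e}^{+}$ the fibering map $h_{u}(t)=J_{\e}(tu)$ has a unique positive critical point $t_{u}$, which is a strict maximum (part $(i)$); $t_{u}$ is bounded away from $0$ on $\mathbb{S}_{\e}^{+}$ and bounded on compact subsets (part $(ii)$); $\hat{m}_{\e}(u)=t_{u}u$ is continuous and its restriction $m_{\e}$ is a homeomorphism of $\mathbb{S}_{\e}^{+}$ onto $\mathcal{N}_{\e}$ with inverse $u\mapsto u/\|u\|_{\e}$ (part $(iii)$); and $J_{\e}\circ m_{\e}$ blows up along sequences approaching $\partial\mathbb{S}_{\e}^{+}$ (part $(iv)$). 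Under these conditions \cite[Corollary 10]{SW} applies and yields $(a)$--$(d)$; below I indicate the main points.

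For $(a)$, the delicate issue is that $\hat{m}_{\e}$ is a priori only continuous, so $\hat{\psi}_{\e}=J_{\e}\circ\hat{m}_{\e}$ cannot be differentiated by a naive chain rule. Following the Szulkin--Weth argument, I would fix $u\in\mathcal{H}_{\e}^{+}$ and $v\in\mathcal{H}_{\e}$ (so that $u+sv\in\mathcal{H}_{\e}^{+}$ for $|s|$ small, since $\mathcal{H}_{\e}^{+}$ is open), and estimate $\hat{\psi}_{\e}(u+sv)-\hat{\psi}_{\e}(u)$ from above by $J_{\e}(t_{u+sv}(u+sv))-J_{\e}(t_{u+sv}u)$ and from below by $J_{\e}(t_{u}(u+sv))-J_{\e}(t_{u}u)$, using in each case that $t_{w}$ maximizes $t\mapsto J_{\e}(tw)$. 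Dividing by $s$ and letting $s\to0$, both bounds converge, by continuity of $w\mapsto t_{w}$ and $J_{\e}\in C^{1}$, to $t_{u}\langle J_{\e}'(t_{u}u),v\rangle$; since $t_{u}=\|\hat{m}_{\e}(u)\|_{\e}/\|u\|_{\e}$ this gives the Gateaux derivative $\langle\hat{\psi}_{\e}'(u),v\rangle=\frac{\|\hat{m}_{\e}(u)\|_{\e}}{\|u\|_{\e}}\langle J_{\e}'(\hat{m}_{\e}(u)),v\rangle$, and continuity of the right-hand side in $u$ (continuity of $\hat{m}_{\e}$ and $J_{\e}'$) upgrades this to $\hat{\psi}_{\e}\in C^{1}(\mathcal{H}_{\e}^{+},\mathbb{R})$. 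Part $(b)$ follows by restriction: for $u\in\mathbb{S}_{\e}^{+}$ one has $\|u\|_{\e}=1$ and $\|\hat{m}_{\e}(u)\|_{\e}=t_{u}=\|m_{\e}(u)\|_{\e}$, so the formula in $(a)$ becomes the one in $(b)$ on $T_{u}\mathbb{S}_{\e}^{+}$.

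For $(c)$, the identity in $(b)$ says that $\psi_{\e}'(u)$ equals $\|m_{\e}(u)\|_{\e}$ times the restriction of $J_{\e}'(m_{\e}(u))$ to $T_{u}\mathbb{S}_{\e}^{+}$; combining this with the splitting $\mathcal{H}_{\e}=T_{u}\mathbb{S}_{\e}^{+}\oplus\mathbb{R}u$ and $\langle J_{\e}'(m_{\e}(u)),m_{\e}(u)\rangle=0$ shows that the full dual norm of $J_{\e}'(m_{\e}(u))$ on $\mathcal{H}_{\e}$ is comparable to $\|\psi_{\e}'(u)\|/\|m_{\e}(u)\|_{\e}$. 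Since \eqref{uNr} gives $\|m_{\e}(u)\|_{\e}\geq r$, a Palais--Smale sequence $(u_{n})$ for $\psi_{\e}$ produces $J_{\e}'(m_{\e}(u_{n}))\to0$, while $J_{\e}(m_{\e}(u_{n}))=\psi_{\e}(u_{n})$ stays bounded; conversely, a bounded Palais--Smale sequence $(u_{n})\subset\mathcal{N}_{\e}$ for $J_{\e}$ remains, by part $(iv)$ of Lemma \ref{lem2.3MAW}, at positive distance from $\partial\mathbb{S}_{\e}^{+}$, so $m_{\e}^{-1}(u_{n})=u_{n}/\|u_{n}\|_{\e}$ is a legitimate element of $\mathbb{S}_{\e}^{+}$ and the same identity run backwards shows it is a Palais--Smale sequence for $\psi_{\e}$. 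Finally, in $(d)$, if $u$ is a critical point of $\psi_{\e}$ then $\langle J_{\e}'(m_{\e}(u)),v\rangle=0$ for all $v\in T_{u}\mathbb{S}_{\e}^{+}$ (using $\|m_{\e}(u)\|_{\e}>0$), and together with $\langle J_{\e}'(m_{\e}(u)),m_{\e}(u)\rangle=0$ and the splitting this forces $J_{\e}'(m_{\e}(u))=0$; the converse is immediate, $\psi_{\e}(u)=J_{\e}(m_{\e}(u))$ identifies the critical values, and since $m_{\e}$ maps $\mathbb{S}_{\e}^{+}$ onto $\mathcal{N}_{\e}$ one gets $\inf_{\mathbb{S}_{\e}^{+}}\psi_{\e}=\inf_{\mathcal{N}_{\e}}J_{\e}$. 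The main obstacle throughout is the first step, namely establishing the $C^{1}$ regularity of $\hat{\psi}_{\e}$ and the gradient formula of $(a)$ despite the mere continuity of $\hat{m}_{\e}$; once this is in hand, $(b)$--$(d)$ are bookkeeping with the homeomorphism $m_{\e}$ and the decomposition $\mathcal{H}_{\e}=T_{u}\mathbb{S}_{\e}^{+}\oplus\mathbb{R}u$.
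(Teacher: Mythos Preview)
Your proposal is correct and follows exactly the paper's approach: the paper states that the proposition is a direct consequence of Lemma \ref{lem2.3MAW} and Corollary 10 in \cite{SW}, which is precisely the framework you invoke. In fact you go further than the paper by sketching the actual Szulkin--Weth argument behind $(a)$--$(d)$, whereas the paper simply cites the abstract result.
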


\begin{remark}\label{rem3MAW}
As in \cite{SW}, we have the following variational characterization of the infimum of $J_{\e}$ over $\mathcal{N}_{\e}$:
\begin{align*}
0<c_{\e}&=\inf_{u\in \mathcal{N}_{\e}} J_{\e}(u)=\inf_{u\in \mathcal{H}_{\e}^{+}} \max_{t>0} J_{\e}(tu)=\inf_{u\in \mathbb{S}_{\e}^{+}} \max_{t>0} J_{\e}(tu).
\end{align*}
Moreover, if
\begin{align*}
c'_{\e}=\inf_{\gamma\in \Gamma_{\e}} \max_{t\in [0, 1]} J_{\e}(\gamma(t)), \, \mbox{ where } \, \Gamma_{\e}=\{\gamma\in C([0, 1], \h): \gamma(0)=0 \mbox{ and } J_{\e}(\gamma(1))<0\},
\end{align*}
then we can argue as in \cite{DF, Rab, W} to verify that $c_{\e}=c'_{\e}$.
\end{remark}

\noindent
The main feature of $J_{\e}$ is that  it satisfies the following compactness property:
\begin{lem}\label{PSc}
The functional $J_{\e}$ satisfies the $(PS)_{c}$ condition at any level $c\in \R$.
\end{lem}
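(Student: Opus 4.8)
The plan is to follow the by now standard scheme for penalized functionals of del Pino--Felmer type, paying attention to the nonlocal and magnetic features. Let $(u_{n})\subset\h$ satisfy $J_{\e}(u_{n})\ri c$ and $J_{\e}'(u_{n})\ri 0$ in $\h'$. First I would show that $(u_{n})$ is bounded: repeating verbatim the estimate leading to \eqref{cancanMAW} with $v$ replaced by $u_{n}$ and using $\langle J_{\e}'(u_{n}),u_{n}\rangle=o(\|u_{n}\|_{\e})$, one obtains
\[
c+o(\|u_{n}\|_{\e})\ \ge\ J_{\e}(u_{n})-\tfrac1\theta\langle J_{\e}'(u_{n}),u_{n}\rangle\ \ge\ \Bigl(\tfrac{\theta-2}{2\theta}\Bigr)\Bigl(1-\tfrac1k\Bigr)\|u_{n}\|_{\e}^{2},
\]
whence $\sup_{n}\|u_{n}\|_{\e}<\infty$. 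Up to a subsequence, $u_{n}\rightharpoonup u$ in $\h$, $u_{n}\ri u$ in $L^{r}(B_{R},\C)$ for every $R>0$ and $r\in[1,\2)$ by Theorem \ref{Sembedding}, and $u_{n}\ri u$ a.e. in $\R^{N}$. Testing $J_{\e}'(u_{n})$ against $\varphi\in C_{c}^{\infty}(\R^{N},\C)$, using the weak convergence in the quadratic term and the growth bounds $(g_{1})$--$(g_{2})$ together with the local compactness in the nonlinear term, I would pass to the limit (and extend by density, Theorem \ref{density}) to get $J_{\e}'(u)=0$; in particular $\|u\|_{\e}^{2}=\Re\int_{\R^{N}}g_{\e}(x,|u|^{2})|u|^{2}\,dx$.

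The core step, and the one I expect to be the main obstacle, is a uniform decay estimate of the tails; the difficulty is that the magnetic phase $e^{\imath(x-y)\cdot A_{\e}(\frac{x+y}{2})}$ does not cancel in the nonlocal difference quotient, so the commutator produced by a cutoff has to be handled through the triangle inequality, in the spirit of the diamagnetic inequality of Lemma \ref{DI}, rather than by an algebraic identity. Fixing $R_{0}>0$ with $\overline{\La_{\e}}\subset B_{R_{0}/2}$ and, for $R\ge R_{0}$, a cutoff $\psi_{R}\in C^{\infty}(\R^{N},[0,1])$ with $\psi_{R}\equiv 0$ on $B_{R/2}$, $\psi_{R}\equiv 1$ on $B_{R}^{c}$, $|\nabla\psi_{R}|\le C/R$, I would test with $\psi_{R}u_{n}\in\h$, so $\langle J_{\e}'(u_{n}),\psi_{R}u_{n}\rangle=o(1)$. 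Splitting $\psi_{R}(x)u_{n}(x)-e^{\imath(x-y)\cdot A_{\e}(\frac{x+y}{2})}\psi_{R}(y)u_{n}(y)=\psi_{R}(x)(u_{n}(x)-e^{\imath(x-y)\cdot A_{\e}(\frac{x+y}{2})}u_{n}(y))+e^{\imath(x-y)\cdot A_{\e}(\frac{x+y}{2})}u_{n}(y)(\psi_{R}(x)-\psi_{R}(y))$, using $\supp\psi_{R}\subset\La_{\e}^{c}$ together with $(g_{3})$-$(ii)$ (so that $g_{\e}(x,|u_{n}|^{2})|u_{n}|^{2}\psi_{R}\le\frac1k V_{\e}(x)|u_{n}|^{2}\psi_{R}$), and bounding the commutator by Cauchy--Schwarz and the well-known estimate $\iint_{\R^{2N}}\frac{|u_{n}(y)|^{2}|\psi_{R}(x)-\psi_{R}(y)|^{2}}{|x-y|^{N+2s}}\,dxdy\le \frac{C}{R^{2s}}\|u_{n}\|_{L^{2}}^{2}$, one reaches
\[
\iint_{\R^{2N}}\frac{|u_{n}(x)-e^{\imath(x-y)\cdot A_{\e}(\frac{x+y}{2})}u_{n}(y)|^{2}\psi_{R}(x)}{|x-y|^{N+2s}}\,dxdy+\Bigl(1-\tfrac1k\Bigr)\int_{\R^{N}}V_{\e}\psi_{R}|u_{n}|^{2}\,dx\le o(1)+\frac{C}{R^{s}}.
\]
Since $\psi_{R}\ge 0$ and $\psi_{R}\equiv 1$ on $B_{R}^{c}$, this yields: for every $\zeta>0$ there is $R_{\zeta}\ge R_{0}$ with
\[
\limsup_{n\ri\infty}\Bigl[\int_{B_{R}^{c}}\Bigl(\int_{\R^{N}}\frac{|u_{n}(x)-e^{\imath(x-y)\cdot A_{\e}(\frac{x+y}{2})}u_{n}(y)|^{2}}{|x-y|^{N+2s}}\,dy\Bigr)dx+\int_{B_{R}^{c}}V_{\e}|u_{n}|^{2}\,dx\Bigr]\le\zeta\qquad(R\ge R_{\zeta}),
\]
and the same bound for $u$ (either testing $J_{\e}'(u)=0$ with $\psi_{R}u$, or by Fatou).

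Finally I would combine this with the local compactness to obtain strong convergence. For fixed $\zeta>0$ and $R=R_{\zeta}$: on $B_{R}$ one has $u_{n}\ri u$ in $L^{q}(B_{R},\C)$ and in $L^{2}(B_{R},\C)$ with $q\in(2,\2)$ as in $(f_{2})$, and since $(g_{1})$--$(g_{2})$ yield $0\le g_{\e}(x,|u_{n}|^{2})|u_{n}|^{2}\le\xi|u_{n}|^{2}+C_{\xi}|u_{n}|^{q}$, the generalized dominated convergence theorem gives $\int_{B_{R}}g_{\e}(x,|u_{n}|^{2})|u_{n}|^{2}\,dx\ri\int_{B_{R}}g_{\e}(x,|u|^{2})|u|^{2}\,dx$; on $B_{R}^{c}\subset\La_{\e}^{c}$ the inequality $g_{\e}(x,|u_{n}|^{2})|u_{n}|^{2}\le\frac1k V_{\e}(x)|u_{n}|^{2}$ and the tail estimate make the $B_{R}^{c}$-contribution $\le\zeta/k$ uniformly in $n$, and likewise for $u$; letting $\zeta\ri 0$ one concludes $\int_{\R^{N}}g_{\e}(x,|u_{n}|^{2})|u_{n}|^{2}\,dx\ri\int_{\R^{N}}g_{\e}(x,|u|^{2})|u|^{2}\,dx$. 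Then $\|u_{n}\|_{\e}^{2}=\langle J_{\e}'(u_{n}),u_{n}\rangle+\Re\int_{\R^{N}}g_{\e}(x,|u_{n}|^{2})|u_{n}|^{2}\,dx\ri\Re\int_{\R^{N}}g_{\e}(x,|u|^{2})|u|^{2}\,dx=\|u\|_{\e}^{2}$, and since $\h$ is a Hilbert space and $u_{n}\rightharpoonup u$, the convergence of the norms forces $u_{n}\ri u$ strongly in $\h$, which would establish the $(PS)_{c}$ condition.
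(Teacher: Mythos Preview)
Your proof is correct and complete; the boundedness step, the identification of the weak limit as a critical point, the cutoff-based tail estimate via $\psi_{R}u_{n}$, and the final norm-convergence argument are all sound. The paper, however, takes a different route to strong convergence and avoids the cutoff test function and the commutator estimate entirely. After showing that $u$ is a critical point, the paper rewrites $\langle J_{\e}'(u_{n}),u_{n}\rangle=o_{n}(1)$ as
\[
[u_{n}]^{2}_{A_{\e}}+\int_{\La_{\e}}V_{\e}|u_{n}|^{2}\,dx+\int_{\La_{\e}^{c}}\mathcal{C}(\e x,|u_{n}|^{2})\,dx=\int_{\La_{\e}}f(|u_{n}|^{2})|u_{n}|^{2}\,dx+o_{n}(1),
\]
with $\mathcal{C}(x,t)=V(x)t-g(x,t)t$, which by $(g_{3})$-$(ii)$ satisfies $(1-\tfrac{1}{k})V(x)t\le\mathcal{C}(x,t)\le V(x)t$ on $\La^{c}\times[0,\infty)$. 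The right-hand side and the term $\int_{\La_{\e}}V_{\e}|u_{n}|^{2}\,dx$ converge by local compactness on the \emph{bounded} set $\La_{\e}$; since both $[\,\cdot\,]^{2}_{A_{\e}}$ and $\int_{\La_{\e}^{c}}\mathcal{C}(\e x,|\cdot|^{2})\,dx$ are weakly lower semicontinuous (the first by the Hilbert structure, the second by Fatou and $\mathcal{C}\ge 0$), a limsup/liminf squeeze forces both to converge separately to their values at $u$. From this the convergence of $\int_{\La_{\e}^{c}}V_{\e}|u_{n}|^{2}\,dx$ follows, and hence $\|u_{n}\|_{\e}\to\|u\|_{\e}$. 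Your approach has the advantage of producing a quantitative uniform tail decay, which is useful in its own right, at the price of handling the nonlocal magnetic commutator; the paper's approach is shorter and exploits only the boundedness of $\La_{\e}$ together with the nonnegativity of $\mathcal{C}$.
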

\begin{proof}
Let $(u_{n})\subset \h$ be a Palais-Smale sequence for $J_{\e}$ at the level $c$, that is, as $n\ri \infty$ 
$$
J_{\e}(u_{n})\rightarrow c \mbox{ and } J_{\e}'(u_{n})\rightarrow 0.
$$
First, we show that $(u_{n})$ is bounded in $\h$. Indeed, using $(g_3)$, we get
\begin{align*}
C(1+\|u_{n}\|_{\e})&\geq J_{\e}(u_{n})-\frac{1}{\theta}\langle J'_{\e}(u_{n}), u_{n}\rangle \\
&= \left(\frac{1}{2}-\frac{1}{\theta}\right)\|u_{n}\|^{2}_{\e}+\frac{1}{\theta}\int_{\R^{N}} \left[g_{\e}(x, |u_{n}|^{2})|u_{n}|^{2}-\frac{\theta}{2} G_{\e}(x, |u_{n}|^{2})\right]\, dx\\
&\geq \left(\frac{1}{2}-\frac{1}{\theta}\right)\|u_{n}\|^{2}_{\e}
-\left(\frac{1}{2}-\frac{1}{\theta}\right)\int_{\Lambda^{c}_{\e}} G_{\e}(x, |u_{n}|^{2})\, dx \\
&\geq \left(\frac{1}{2}-\frac{1}{\theta}\right)\|u_{n}\|^{2}_{\e}-\left(\frac{1}{2}-\frac{1}{\theta}\right)\frac{1}{k}\int_{\Lambda^{c}_{\e}} V_{\e} (x) |u_{n}|^{2}\, dx \\
&\geq \left(\frac{\theta-2}{2\theta}\right)\left( 1-\frac{1}{k} \right)\|u_{n}\|^{2}_{\e},
\end{align*}
and recalling that $k>1$ and $\theta>2$, we deduce that $(u_{n})$ is bounded in $\h$. 
Since $\h$ is a reflexive space, we can find a subsequence, still denoted by $(u_{n})$, and $u\in \h$ such that 
\begin{align}\label{ADOMconvexp}
\begin{array}{ll}
u_{n}\rightharpoonup u \quad &\mbox{ in } \h \mbox{ as } n\ri \infty, \\
u_{n}\ri u \quad &\mbox{ in } L^{q}_{loc}(\R^N, \C) \mbox{ for all } q\in [1, \2) \mbox{ as } n\ri \infty, \\
|u_{n}|\ri |u| \quad &\mbox{ a.e. in } \R^N \mbox{ as } n\ri \infty.  
\end{array}
\end{align}
Using $(g_1)$ and $(g_2)$, we see that 
\begin{equation}\label{newformulaexp}
\lim_{n\ri \infty} \Re\int_{\R^N} g_{\e}(x, |u_{n}|^{2})u_{n}\overline{\phi} \, dx = \Re\int_{\R^N} g_{\e}(x, |u|^{2})u \overline{\phi} \, dx \quad \mbox{ for all } \phi \in C^{\infty}_{c}(\R^N, \C).
\end{equation}
Taking into account \eqref{ADOMconvexp}, \eqref{newformulaexp} and Theorem \ref{density}, we deduce that
\begin{align*}
\langle J_{\e}'(u), \phi \rangle =0 \quad \mbox{ for all } \phi \in \h, 
\end{align*}
that is $u$ is a critical point for $J_{\e}$.
In particular, $\langle J_{\e}'(u), u \rangle =0$, or equivalently
\begin{align}\label{new1exp}
[u]^{2}_{A_{\e}} + \int_{\Lambda_{\e}} V_{\e}(x) |u|^{2}\, dx+\int_{\Lambda^{c}_{\e}} \mathcal{C}(\e x, |u|^{2})\, dx= \int_{\Lambda_{\e}} f(|u|^{2})|u|^{2}\, dx,
\end{align}
where $\mathcal{C}(x, t)=V(x)t-g(x, t)t$. Note that, by $(g_3)$-$(ii)$, it holds
\begin{align}\label{LIMITETERMOZETA}
V(x) t\geq \mathcal{C}(x, t)\geq \left(1-\frac{1}{k}\right)V(x)t\geq 0 \quad\mbox{ for all } x\in \Lambda^{c}, \, t\geq 0.
\end{align}
Recalling that $\langle J_{\e}'(u_{n}), u_{n}\rangle =o_{n}(1)$, we also know that
\begin{align}\label{new2exp}
[u_{n}]^{2}_{A_{\e}} + \int_{\Lambda_{\e}} V_{\e}(x) |u_{n}|^{2}\, dx+\int_{\Lambda^{c}_{\e}} \mathcal{C}(\e x, |u_{n}|^{2})\, dx= \int_{\Lambda_{\e}} f(|u_{n}|^{2})|u_{n}|^{2}\, dx +o_{n}(1). 
\end{align}
Since $\Lambda_{\e}$ is bounded, it follows from the local compact embeddings in Theorem \ref{Sembedding} that 
\begin{align}\label{new22exp}
\lim_{n\ri \infty} \int_{\Lambda_{\e}} f(|u_{n}|^{2})|u_{n}|^{2} \,dx =  \int_{\Lambda_{\e}} f(|u|^{2})|u|^{2} \,dx,
\end{align}
and 
\begin{align}\label{new222exp}
\lim_{n\ri \infty} \int_{\Lambda_{\e}} V_{\e}(x) |u_{n}|^{2}\, dx=\int_{\Lambda_{\e}} V_{\e}(x) |u|^{2}\, dx.
\end{align}
Putting together \eqref{new1exp}, \eqref{new2exp}, \eqref{new22exp} and \eqref{new222exp}, we deduce that
\begin{align*}
\limsup_{n\ri \infty} &\left([u_{n}]^{2}_{A_{\e}} + \int_{\Lambda^{c}_{\e}} \mathcal{C}(\e x, |u_{n}|^{2})\, dx \right)=[u]^{2}_{A_{\e}} + \int_{\Lambda^{c}_{\e}} \mathcal{C}(\e x, |u|^{2})\, dx. 
\end{align*}
Now, by \eqref{LIMITETERMOZETA} and Fatou's lemma, we get
\begin{align*}
\liminf_{n\ri \infty} &\left([u_{n}]^{2}_{A_{\e}} + \int_{\Lambda^{c}_{\e}} \mathcal{C}(\e x, |u_{n}|^{2})\, dx \right)\geq [u]^{2}_{A_{\e}} + \int_{\Lambda^{c}_{\e}} \mathcal{C}(\e x, |u|^{2})\, dx. 
\end{align*}
Hence,
\begin{align}\label{new2222exp}
\lim_{n\ri \infty} [u_{n}]^{2}_{A_{\e}}=[u]^{2}_{A_{\e}},
\end{align}
and
\begin{align*}
\lim_{n\ri \infty} \int_{\Lambda^{c}_{\e}} \mathcal{C}(\e x, |u_{n}|^{2})\, dx =\int_{\Lambda^{c}_{\e}} \mathcal{C}(\e x, |u|^{2})\, dx. 
\end{align*}
The last limit, Fatou's lemma and \eqref{LIMITETERMOZETA}
lead to
\begin{align*}
\int_{\Lambda^{c}_{\e}} V_{\e}(x) |u|^{2}\, dx &\leq \liminf_{n\ri \infty}\int_{\Lambda^{c}_{\e}} V_{\e}(x) |u_{n}|^{2}\, dx \\
&\leq \limsup_{n\ri \infty}\int_{\Lambda^{c}_{\e}} V_{\e}(x) |u_{n}|^{2}\, dx \\
&\leq \left(\frac{k}{k-1}\right) \limsup_{n\ri \infty} \int_{\Lambda^{c}_{\e}} \mathcal{C}(\e x, |u_{n}|^{2})\, dx  \\
&=\left(\frac{k}{k-1}\right)\int_{\Lambda^{c}_{\e}} \mathcal{C}(\e x, |u|^{2})\, dx\leq \left(\frac{k}{k-1}\right) \int_{\Lambda^{c}_{\e}} V_{\e}(x) |u|^{2}\, dx \quad \mbox{ for all } k>1,
\end{align*}
and by sending $k\ri \infty$ we find
\begin{align*}
\lim_{n\ri \infty} \int_{\Lambda^{c}_{\e}} V_{\e}(x) |u_{n}|^{2}\, dx =\int_{\Lambda^{c}_{\e}} V_{\e}(x) |u|^{2}\, dx 
\end{align*}
which combined with \eqref{new222exp} gives 
\begin{align}\label{new22222exp}
\lim_{n\ri \infty} \int_{\R^N} V_{\e}(x) |u_{n}|^{2}\, dx =\int_{\R^N} V_{\e}(x) |u|^{2}\, dx. 
\end{align}
Putting together \eqref{new2222exp} and \eqref{new22222exp}, we obtain that
\begin{align*}
\lim_{n\ri \infty} \|u_{n}\|_{\e}^{2} = \|u\|_{\e}^{2}. 
\end{align*}
Since $\h$ is a Hilbert space and $u_{n}\rightharpoonup u$ in $\h$ as $n\ri \infty$, we conclude that $u_{n}\ri u$ in $\h$ as $n\ri \infty$.
\end{proof}
%

\begin{cor}\label{cor2.1MAW}
The functional $\psi_{\e}$ satisfies the $(PS)_{c}$ condition on $\mathbb{S}_{\e}^{+}$ at any level $c\in \R$. 
\end{cor}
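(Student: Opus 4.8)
The plan is to transfer the Palais--Smale condition from $J_{\e}$ on $\mathcal{N}_{\e}$ to $\psi_{\e}$ on $\mathbb{S}_{\e}^{+}$ by means of the homeomorphism $m_{\e}$, exploiting the fact (Proposition \ref{prop2.1MAW}) that the two functionals have matching critical levels and matching $(PS)$ sequences, together with the compactness of $J_{\e}$ proved in Lemma \ref{PSc}.

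Fix $c\in\R$ and let $(u_{n})\subset\mathbb{S}_{\e}^{+}$ be a Palais--Smale sequence for $\psi_{\e}$ at level $c$, i.e. $\psi_{\e}(u_{n})\to c$ and $\psi_{\e}'(u_{n})\to 0$ in $(T_{u_{n}}\mathbb{S}_{\e}^{+})^{*}$. The first step is to keep the sequence away from the incomplete boundary of $\mathbb{S}_{\e}^{+}$. Since $\psi_{\e}(u_{n})=\hat{\psi}_{\e}(u_{n})=J_{\e}(m_{\e}(u_{n}))\to c$ is bounded, Lemma \ref{lem2.3MAW}$(iv)$ rules out ${\rm dist}(u_{n},\partial\mathbb{S}_{\e}^{+})\to 0$ along any subsequence, so there is $\delta_{0}>0$ with ${\rm dist}(u_{n},\partial\mathbb{S}_{\e}^{+})\geq\delta_{0}$ for all $n$.

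Next, set $v_{n}=m_{\e}(u_{n})\in\mathcal{N}_{\e}$. By Proposition \ref{prop2.1MAW}$(c)$, $(v_{n})$ is a Palais--Smale sequence for $J_{\e}$, and $J_{\e}(v_{n})=\psi_{\e}(u_{n})\to c$. Arguing exactly as in \eqref{cancanMAW}, every $v\in\mathcal{N}_{\e}$ satisfies $J_{\e}(v)\geq\left(\frac{\theta-2}{2\theta}\right)\left(1-\frac{1}{k}\right)\|v\|_{\e}^{2}$; since $k>1$, $\theta>2$ and $(J_{\e}(v_{n}))$ is bounded, the sequence $(v_{n})$ is bounded in $\h$. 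By Lemma \ref{PSc}, $J_{\e}$ satisfies the $(PS)_{c}$ condition, so up to a subsequence $v_{n}\to v$ in $\h$. From \eqref{uNr} we get $\|v\|_{\e}=\lim_{n}\|v_{n}\|_{\e}\geq r>0$, hence $v\neq 0$; passing to the limit in $\langle J_{\e}'(v_{n}),v_{n}\rangle=0$ yields $\langle J_{\e}'(v),v\rangle=0$, so $v\in\mathcal{N}_{\e}\subset\mathcal{H}_{\e}^{+}$.

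Finally, since $u_{n}\in\mathbb{S}_{\e}^{+}$ we have $u_{n}=m_{\e}^{-1}(v_{n})$, and by Lemma \ref{lem2.3MAW}$(iii)$ the map $m_{\e}^{-1}(w)=w/\|w\|_{\e}$ is continuous on $\mathcal{N}_{\e}$; therefore $u_{n}=m_{\e}^{-1}(v_{n})\to m_{\e}^{-1}(v)=v/\|v\|_{\e}\in\mathbb{S}_{\e}^{+}$ in $\h$. Thus $(u_{n})$ has a subsequence converging in $\mathbb{S}_{\e}^{+}$, which is precisely the $(PS)_{c}$ condition for $\psi_{\e}$. The only genuinely delicate point is the opening step, controlling the sequence away from $\partial\mathbb{S}_{\e}^{+}$, and this is exactly what Lemma \ref{lem2.3MAW}$(iv)$ supplies; the remainder is a direct combination of Proposition \ref{prop2.1MAW} and Lemma \ref{PSc}.
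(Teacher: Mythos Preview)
Your proof is correct and follows essentially the same route as the paper's: transfer the $(PS)$ sequence from $\psi_{\e}$ to $J_{\e}$ via $m_{\e}$ using Proposition~\ref{prop2.1MAW}$(c)$, apply Lemma~\ref{PSc}, and pull back with the continuous inverse $m_{\e}^{-1}$ from Lemma~\ref{lem2.3MAW}$(iii)$. Your additional remarks (distance from $\partial\mathbb{S}_{\e}^{+}$ via Lemma~\ref{lem2.3MAW}$(iv)$, the explicit bound on $\|v_{n}\|_{\e}$, and the verification $v\in\mathcal{N}_{\e}$) are correct but not strictly needed, since Lemma~\ref{PSc} already establishes boundedness internally and the limit automatically lies in $\mathcal{N}_{\e}$.
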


\begin{proof}
Let $(u_{n})\subset \mathbb{S}_{\e}^{+}$ be a Palais-Smale sequence for $\psi_{\e}$ at the level $c$. Then, 
\begin{equation*}
\psi_{\e}(u_{n})\rightarrow c \quad \mbox{ and } \quad \|\psi_{\e}'(u_{n})\|_{*}\rightarrow 0,
\end{equation*}
where $\|\cdot \|_{*}$ denotes the norm in the dual space $(T_{u_{n}} \mathbb{S}_{\e}^{+})^{*}$.
It follows from Proposition \ref{prop2.1MAW}-$(c)$ that $(m_{\e}(u_{n}))\subset \h$ is a Palais-Smale sequence for $J_{\e}$ at the level $c$. Then, using Lemma \ref{PSc}, there exists $u\in \mathbb{S}_{\e}^{+}$ such that, up to a subsequence, 
\begin{equation*}
m_{\e}(u_{n})\rightarrow m_{\e}(u) \quad \mbox{ in } \h. 
\end{equation*}
Applying Lemma \ref{lem2.3MAW}-$(iii)$ we can infer that $u_{n}\rightarrow u$ in $\mathbb{S}_{\e}^{+}$.
\end{proof}

We end this section by showing the following existence result for \eqref{MPe}.
\begin{thm}
Assume that $(V_1)$-$(V_2)$ and $(f_1)$-$(f_4)$ hold. Then, for all $\e>0$, there exists a ground state solution to \eqref{MPe}.
\end{thm}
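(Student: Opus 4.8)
The plan is to produce the ground state solution as a minimizer of $J_\e$ over the Nehari manifold $\mathcal{N}_\e$, exploiting the machinery built up in Lemma \ref{lem2.3MAW}, Proposition \ref{prop2.1MAW} and the compactness statement in Lemma \ref{PSc}. Recall from Remark \ref{rem3MAW} that $c_\e = \inf_{\mathcal{N}_\e} J_\e = \inf_{\mathbb{S}_\e^+} \psi_\e > 0$. The strategy is: first obtain a Palais--Smale sequence at level $c_\e$ for the constrained functional $\psi_\e$ on the $C^1$-manifold $\mathbb{S}_\e^+$; then transfer it via $m_\e$ to a $(PS)_{c_\e}$ sequence for $J_\e$; then invoke Lemma \ref{PSc} to extract a convergent subsequence, whose limit is the desired critical point of $J_\e$ lying on $\mathcal{N}_\e$ with $J_\e$-value $c_\e$.

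In detail, I would proceed as follows. \textbf{Step 1.} Since $\psi_\e \in C^1(\mathbb{S}_\e^+, \R)$ (Proposition \ref{prop2.1MAW}-$(b)$), is bounded below by $c_\e > 0$, and — by Lemma \ref{lem2.3MAW}-$(iv)$ — satisfies $\psi_\e(u_n) = J_\e(m_\e(u_n)) \to \infty$ whenever ${\rm dist}(u_n, \partial \mathbb{S}_\e^+) \to 0$, the functional $\psi_\e$ is bounded away from its boundary behaviour near its infimum. Hence Ekeland's variational principle, applied on the complete metric space obtained by the standard device (the sublevel set $\{\psi_\e \le c_\e + 1\}$ stays uniformly away from $\partial \mathbb{S}_\e^+$, so it is a complete metric space in the induced metric), yields a sequence $(u_n) \subset \mathbb{S}_\e^+$ with $\psi_\e(u_n) \to c_\e$ and $\|\psi_\e'(u_n)\|_* \to 0$; that is, a Palais--Smale sequence for $\psi_\e$ at level $c_\e$. \textbf{Step 2.} By Proposition \ref{prop2.1MAW}-$(c)$, $(m_\e(u_n)) \subset \mathcal{N}_\e$ is a $(PS)_{c_\e}$ sequence for $J_\e$. \textbf{Step 3.} By Lemma \ref{PSc}, $J_\e$ satisfies $(PS)_{c_\e}$, so up to a subsequence $m_\e(u_n) \to w$ in $\h$ for some $w \in \h$. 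Passing to the limit in $J_\e(m_\e(u_n)) = c_\e + o_n(1)$ and $J_\e'(m_\e(u_n)) \to 0$ gives $J_\e(w) = c_\e$ and $J_\e'(w) = 0$; and since $\|m_\e(u_n)\|_\e \ge r > 0$ by \eqref{uNr}, the limit satisfies $\|w\|_\e \ge r$, so $w \ne 0$ and $w \in \mathcal{N}_\e$. Thus $w$ is a nontrivial critical point of $J_\e$ realizing the least energy $c_\e$, i.e. a ground state solution of \eqref{MPe}.

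Alternatively, and perhaps more cleanly given what is already in place, one can skip Ekeland entirely: by Proposition \ref{prop2.1MAW}-$(d)$, $u$ is a critical point of $\psi_\e$ iff $m_\e(u)$ is a nontrivial critical point of $J_\e$, with matching critical values, and $\inf_{\mathbb{S}_\e^+}\psi_\e = c_\e$. So it suffices to show $\psi_\e$ attains its infimum. Taking a minimizing sequence $(u_n)$ for $\psi_\e$ and applying Ekeland gives a minimizing $(PS)_{c_\e}$ sequence, and then Corollary \ref{cor2.1MAW} (the $(PS)_{c}$ condition for $\psi_\e$ on $\mathbb{S}_\e^+$) provides $u_n \to u$ in $\mathbb{S}_\e^+$; by continuity $\psi_\e(u) = c_\e$, so $u$ is a minimizer, hence a critical point of $\psi_\e$, and $m_\e(u)$ is the sought ground state. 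The only subtle point — the main obstacle — is the passage near the incomplete boundary $\partial \mathbb{S}_\e^+$: one must guarantee the minimizing/PS sequence does not drift toward $\partial \mathbb{S}_\e^+$, but this is exactly what Lemma \ref{lem2.3MAW}-$(iv)$ rules out, since there $\psi_\e \to \infty$ while we stay near the finite level $c_\e$. With that observation the rest is routine, and the theorem follows.
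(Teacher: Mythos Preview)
Your argument is correct, but the paper takes a shorter and more classical route. The paper simply observes that, by Lemma \ref{MPG} (mountain pass geometry) and Lemma \ref{PSc} (the $(PS)_c$ condition at every level), the mountain pass theorem of Ambrosetti--Rabinowitz applies directly to $J_\e$ and produces a nontrivial critical point at the mountain pass level $c'_\e$; then Remark \ref{rem3MAW} gives $c'_\e = c_\e$, so this critical point is a ground state. That's the entire proof.

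Your approach instead stays inside the Szulkin--Weth framework: you run Ekeland on the incomplete sphere $\mathbb{S}_\e^+$ (using Lemma \ref{lem2.3MAW}-$(iv)$ to control the boundary), transfer via $m_\e$ to a $(PS)_{c_\e}$ sequence for $J_\e$, and then invoke Lemma \ref{PSc} or Corollary \ref{cor2.1MAW}. This is perfectly valid---indeed, the paper itself uses exactly this Ekeland-on-the-closure device later in Lemma \ref{lem3.3MAW} for the autonomous problem---but it is more machinery than the statement requires. The advantage of your route is that it lives entirely in the Nehari setting where the multiplicity arguments take place; the advantage of the paper's route is brevity, since the mountain pass theorem already packages the Ekeland step and the identification $c_\e = c'_\e$ does the rest.
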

\begin{proof}
In view of Lemma \ref{MPG} and Lemma \ref{PSc}, we can apply the mountain pass theorem \cite{AR} to see that for all $\e>0$ there exists a nontrivial critical point $u_{\e}\in \h$ of $J_{\e}$. By Remark \ref{rem3MAW}, we deduce the thesis.
\end{proof}


\section{The limiting scalar problem}

In this section we focus our attention on the limiting scalar problem associated to \eqref{MPe}, namely
\begin{equation}\label{AP0}
\left\{
\begin{array}{ll}
(-\Delta)^{s} u + V_{0} u=  f(u^{2})u \mbox{ in } \R^{N}, \\
u\in H^{s}(\mathbb{R}^{N}, \mathbb{R}).
\end{array}
\right.
\end{equation}
The  corresponding energy functional $I_{0}: \mathcal{H}_{0}\rightarrow \R$ is given by
\begin{align*}
I_{0}(u)=\frac{1}{2}\|u\|^{2}_{0}-\int_{\R^{N}} \frac{1}{2}F(u^{2})\, dx,
\end{align*}
where $\mathcal{H}_{0}$ stands for the fractional Sobolev space $H^{s}(\R^{N}, \R)$ endowed with the norm 
$$
\|u\|_{0}=\sqrt{[u]^{2}+V_{0}\|u\|^{2}_{L^{2}(\R^{N})}}.
$$ 
For any $u, v\in \mathcal{H}_{0}$, we set
$$
\langle u, v\rangle_{0}=\iint_{\R^{2N}} \frac{(u(x)-u(y))(v(x)-v(y))}{|x-y|^{N+2s}}\, dxdy+V_{0}\int_{\R^{N}} u v\, dx.
$$
Let 
$$
\mathcal{M}_{0}=\{u\in  \mathcal{H}_{0}\setminus \{0\} : \langle I_{\mu}'(u), u \rangle =0  \}
$$
be the Nehari manifold associated with $I_{0}$. 
Let us consider
\begin{equation*}
\mathcal{H}_{0}^{+}= \{u\in \mathcal{H}_{0} : |{\rm supp}(|u|)|>0\}. 
\end{equation*}
Let $\mathbb{S}_{0}$ be the unit sphere of $\mathcal{H}_{0}$ and we denote by $\mathbb{S}_{0}^{+}= \mathbb{S}_{\e}\cap \mathcal{H}_{0}^{+}$.
We observe that $\mathbb{S}_{0}^{+}$ is an incomplete $C^{1,1}$-manifold of codimension $1$, modelled on $\mathcal{H}_{0}$ and contained in the open $\mathcal{H}_{0}^{+}$. Then, $\mathcal{H}_{0}=T_{u} \mathbb{S}_{0}^{+} \oplus \mathbb{R} u$ for each $u\in \mathbb{S}_{0}^{+}$, where
\begin{equation*}
T_{u} \mathbb{S}_{0}^{+}= \{v \in \mathcal{H}_{0} : \langle u, v\rangle_{0}=0\}.
\end{equation*} 
Arguing as in the proof of Lemma \ref{lem2.3MAW} (see also \cite{A1}) we have the following result.
\begin{lem}\label{lem2.3AMAW}
Assume that $(f_1)$-$(f_4)$ hold. Then, 
\begin{compactenum}[$(i)$]
\item For each $u\in \mathcal{H}_{0}^{+}$, let $h:\mathbb{R}_{+}\rightarrow \mathbb{R}$ be defined by $h_{u}(t)= I_{0}(tu)$. Then, there is a unique $t_{u}>0$ such that 
\begin{align*}
&h'_{u}(t)>0 \mbox{ in } (0, t_{u}),\\
&h'_{u}(t)<0 \mbox{ in } (t_{u}, \infty).
\end{align*}
\item There exists $\tau>0$ independent of $u$ such that $t_{u}\geq \tau$ for any $u\in \mathbb{S}_{0}^{+}$. Moreover, for each compact set $\mathbb{K}\subset \mathbb{S}_{0}^{+}$ there is a positive constant $C_{\mathbb{K}}$ such that $t_{u}\leq C_{\mathbb{K}}$ for any $u\in \mathbb{K}$.
\item The map $\hat{m}_{0}: \mathcal{H}_{0}^{+}\rightarrow \mathcal{M}_{0}$ given by $\hat{m}_{0}(u)= t_{u}u$ is continuous and $m_{0}= \hat{m}_{0}|_{\mathbb{S}_{0}^{+}}$ is a homeomorphism between $\mathbb{S}_{0}^{+}$ and $\mathcal{M}_{0}$. Moreover, $m_{0}^{-1}(u)=\frac{u}{\|u\|_{0}}$.
\item If there is a sequence $(u_{n})\subset \mathbb{S}_{0}^{+}$ such that ${\rm dist}(u_{n}, \partial \mathbb{S}_{0}^{+})\rightarrow 0$ then $\|m_{0}(u_{n})\|_{0}\rightarrow \infty$ and $I_{0}(m_{0}(u_{n}))\rightarrow \infty$.
\end{compactenum}
\end{lem}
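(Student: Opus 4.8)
The plan is to transcribe, almost verbatim, the proof of Lemma \ref{lem2.3MAW} (see also \cite{A1}), the argument being in fact simpler here because in the limiting problem the nonlinearity is $f$ itself on all of $\R^N$: there is neither a penalization region nor a magnetic potential, so that $\mathcal{H}_0^+=\mathcal{H}_0\setminus\{0\}$. A preliminary remark, obtained exactly as in Lemma \ref{MPG} with $g_\e$, $V_\e$, $\Lambda_\e$ replaced by $f$, $V_0$, $\R^N$, is that $I_0$ has a mountain-pass geometry: $I_0(0)=0$; by $(f_1)$-$(f_2)$ (which yield $F(t^{2})\le\delta t^{2}+C_\delta|t|^{\2}$) and the continuous Sobolev embedding $H^s(\R^N)\hookrightarrow L^p(\R^N)$, $p\in[2,\2]$, one has $I_0(u)\ge\alpha>0$ on a small sphere; and, using $(f_3)$ (which gives $F(\tau)\ge c\,\tau^{\theta/2}$ for $\tau$ large), for every $u\not\equiv0$ one gets $I_0(tu)\le\frac{t^{2}}{2}\|u\|_0^{2}-C\,t^{\theta}\|u\|^{\theta}_{L^{\theta}(\R^N)}+C'\to-\infty$ as $t\to\infty$, since $\theta>2$.

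For $(i)$, set $h_u(t)=I_0(tu)$. By the previous remark, $h_u(0)=0$, $h_u>0$ near $0$ and $h_u(t)\to-\infty$, so $\max_{t\ge0}h_u$ is attained at some $t_u>0$ with $h_u'(t_u)=0$, that is $t_uu\in\mathcal{M}_0$. Since $h_u'(t)=0$ is equivalent to $\|u\|_0^{2}=\int_{\R^N}f(t^{2}u^{2})u^{2}\,dx$ and, by $(f_4)$, the map $t\mapsto\int_{\R^N}f(t^{2}u^{2})u^{2}\,dx$ is strictly increasing on $(0,\infty)$ (recall $u\not\equiv0$), such a $t_u$ is unique, and the stated sign of $h_u'$ before and after $t_u$ follows. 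For $(ii)$, if $u\in\mathbb{S}_0^+$ then $1=\|u\|_0^{2}=\int_{\R^N}f(t_u^{2}u^{2})u^{2}\,dx$, and the subcritical bound $f(\tau)\le\xi+C_\xi\tau^{(\2-2)/2}$ together with the above embeddings give $1\le\xi C_1+C_\xi C_2\,t_u^{\2-2}$; choosing $\xi>0$ small yields $t_u\ge\tau>0$ independent of $u$. For the upper bound on a compact set $\mathbb{K}\subset\mathbb{S}_0^+$, suppose $u_n\in\mathbb{K}$ with $t_n=t_{u_n}\to\infty$; then, up to a subsequence, $u_n\to u$ in $\mathcal{H}_0$ and, by the mountain-pass geometry, $I_0(t_nu_n)\to-\infty$. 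On the other hand, for every $v\in\mathcal{M}_0$, using $\langle I_0'(v),v\rangle=0$ and $(f_3)$,
\begin{equation*}
I_0(v)=I_0(v)-\tfrac1\theta\langle I_0'(v),v\rangle=\Bigl(\tfrac12-\tfrac1\theta\Bigr)\|v\|_0^{2}+\int_{\R^N}\Bigl(\tfrac1\theta f(v^{2})v^{2}-\tfrac12F(v^{2})\Bigr)dx\ge\Bigl(\tfrac12-\tfrac1\theta\Bigr)\|v\|_0^{2},
\end{equation*}
hence $I_0(t_nu_n)\ge(\tfrac12-\tfrac1\theta)\|t_nu_n\|_0^{2}=(\tfrac12-\tfrac1\theta)t_n^{2}\to+\infty$, a contradiction.

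For $(iii)$, $\hat m_0$ is well defined by $(i)$; since $\mathcal{M}_0\subset\mathcal{H}_0\setminus\{0\}=\mathcal{H}_0^+$, the map $m_0^{-1}(u)=u/\|u\|_0$ is well defined and continuous, and $m_0^{-1}(m_0(u))=t_uu/\|t_uu\|_0=u$ for every $u\in\mathbb{S}_0^+$, so $m_0$ is a bijection. To prove that $\hat m_0$ is continuous, let $u_n\to u$ in $\mathcal{H}_0^+$; then $v_n:=u_n/\|u_n\|_0\to v:=u/\|u\|_0$ in $\mathcal{H}_0$, hence in $L^p(\R^N)$ for every $p\in[2,\2]$ by continuity of the embeddings, and $v_n\to v$ a.e. along a subsequence. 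By $(ii)$ the numbers $t_n=t_{v_n}$ remain in a compact subset of $(0,\infty)$, so $t_n\to t_0>0$ up to a further subsequence; passing to the limit in $t_n^{2}=\int_{\R^N}f(t_n^{2}v_n^{2})t_n^{2}v_n^{2}\,dx$ — which is legitimate thanks to the $L^p$-convergence of $v_n$ and the subcritical growth of $f$ — we get $t_0^{2}=\int_{\R^N}f(t_0^{2}v^{2})t_0^{2}v^{2}\,dx$, i.e. $t_0v\in\mathcal{M}_0$, whence $t_0=t_v$ by $(i)$. Since this limit does not depend on the chosen subsequence, $\hat m_0(u_n)=t_{v_n}v_n\to t_vv=\hat m_0(u)$, which gives the continuity of $\hat m_0$ and of $m_0$. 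Finally, $(iv)$ follows by repeating word for word the argument of Lemma \ref{lem2.3MAW}-$(iv)$, with $g_\e$, $V_\e$, $\Lambda_\e$ replaced by $f$, $V_0$, $\R^N$.

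The step I expect to require the most care is the passage to the limit inside the Nehari identity in $(iii)$: one has to combine the uniform lower and upper bounds on $t_{v_n}$ coming from $(ii)$ with the strong convergence $v_n\to v$ in $L^p(\R^N)$ — including the critical exponent $p=\2$, which is available because strong convergence in $H^s(\R^N)$ is preserved by the continuous critical Sobolev embedding — and with the subcritical growth $(f_2)$ of $f$, in order to justify $\int_{\R^N}f(t_n^{2}v_n^{2})t_n^{2}v_n^{2}\,dx\to\int_{\R^N}f(t_0^{2}v^{2})t_0^{2}v^{2}\,dx$; everything else is a routine transcription of the proof of Lemma \ref{lem2.3MAW}, which is actually easier here since the magnetic Gagliardo seminorm and the diamagnetic inequality play no role.
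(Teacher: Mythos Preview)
Your proposal is correct and follows exactly the route the paper indicates, which simply refers back to the proof of Lemma \ref{lem2.3MAW}. Note, incidentally, that your own observation $\mathcal{H}_0^+=\mathcal{H}_0\setminus\{0\}$ implies $\mathbb{S}_0^+=\mathbb{S}_0$ is the full unit sphere with $\partial\mathbb{S}_0^+=\emptyset$, so item $(iv)$ is in fact vacuous and no argument is needed there.
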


Let us define the maps 
\begin{equation*}
\hat{\psi}_{0}: \mathcal{H}_{0}^{+} \rightarrow \mathbb{R} \quad \mbox{ and } \quad \psi_{0}: \mathbb{S}_{0}^{+}\rightarrow \mathbb{R}, 
\end{equation*}
by $\hat{\psi}_{0}(u)= I_{0}(\hat{m}_{0}(u))$ and $\psi_{0}=\hat{\psi}_{0}|_{\mathbb{S}_{0}^{+}}$. 

The next result is a direct consequence of Lemma \ref{lem2.3AMAW} and Corollary $10$ in \cite{SW}. 
\begin{prop}\label{prop2.1AMAW}
Assume that  $(f_{1})$-$(f_{4})$ hold. Then, 
\begin{compactenum}[$(a)$]
\item $\hat{\psi}_{0} \in C^{1}(\mathcal{H}_{0}^{+}, \mathbb{R})$ and 
\begin{equation*}
\langle \hat{\psi}_{0}'(u), v\rangle = \frac{\|\hat{m}_{0}(u)\|_{0}}{\|u\|_{0}} \langle I_{0}'(\hat{m}_{0}(u)), v\rangle, 
\end{equation*}
for every $u\in \mathcal{H}_{0}^{+}$ and $v\in \mathcal{H}_{0}$. 
\item $\psi_{0} \in C^{1}(\mathbb{S}_{0}^{+}, \mathbb{R})$ and 
\begin{equation*}
\langle \psi_{0}'(u), v \rangle = \|m_{0}(u)\|_{0} \langle I_{0}'(m_{0}(u)), v\rangle, 
\end{equation*}
for every $v\in T_{u}\mathbb{S}_{0}^{+}$.
\item If $(u_{n})$ is a Palais-Smale sequence for $\psi_{0}$, then $(m_{0}(u_{n}))$ is a Palais-Smale sequence for $I_{0}$. If $(u_{n})\subset \mathcal{M}_{0}$ is a bounded Palais-Smale sequence for $I_{0}$, then $(m_{0}^{-1}(u_{n}))$ is a Palais-Smale sequence for  $\psi_{0}$. 
\item $u$ is a critical point of $\psi_{0}$ if and only if $m_{0}(u)$ is a nontrivial critical point for $I_{0}$. Moreover, the corresponding critical values coincide and 
\begin{equation*}
\inf_{u\in \mathbb{S}_{0}^{+}} \psi_{0}(u)= \inf_{u\in \mathcal{M}_{0}} I_{0}(u).  
\end{equation*}
\end{compactenum}
\end{prop}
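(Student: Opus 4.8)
The plan is to recognize Proposition \ref{prop2.1AMAW} as one more instance of the abstract critical point theory of Szulkin and Weth, exactly as was done for the penalized functional in Proposition \ref{prop2.1MAW}; thus the work reduces to checking that $I_{0}$ satisfies the hypotheses of the abstract result, and then invoking Corollary $10$ in \cite{SW}.

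First I would record that $I_{0}\in C^{1}(\mathcal{H}_{0},\R)$ with $\langle I_{0}'(u),v\rangle=\langle u,v\rangle_{0}-\int_{\R^{N}}f(u^{2})u v\,dx$: indeed, by $(f_{1})$--$(f_{2})$, for each $\delta>0$ there is $C_{\delta}>0$ with $|f(t^{2})t|\leq\delta|t|+C_{\delta}|t|^{\2-1}$ for all $t\in\R$, so that the continuity of the embeddings $H^{s}(\R^{N},\R)\hookrightarrow L^{r}(\R^{N},\R)$, $r\in[2,\2]$, makes the associated Nemytskii operator continuous from $\mathcal{H}_{0}$ into its dual. I would also note that $\mathcal{H}_{0}^{+}=\mathcal{H}_{0}\setminus\{0\}$ is open, so that $\mathbb{S}_{0}^{+}$ is a $C^{1,1}$-submanifold of $\mathcal{H}_{0}$ with $\mathcal{H}_{0}=T_{u}\mathbb{S}_{0}^{+}\oplus\R u$ for each $u\in\mathbb{S}_{0}^{+}$.

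The key step is then to observe that the structural assumptions demanded in \cite{SW} are precisely the four parts of Lemma \ref{lem2.3AMAW}: for each $u\in\mathcal{H}_{0}^{+}$ the fiber map $h_{u}(t)=I_{0}(tu)$ has a unique positive critical point $t_{u}$, which is its strict maximum; $t_{u}$ is bounded away from $0$ on $\mathbb{S}_{0}^{+}$ and bounded on each compact subset of $\mathbb{S}_{0}^{+}$; consequently $\hat{m}_{0}$ is continuous and $m_{0}=\hat{m}_{0}|_{\mathbb{S}_{0}^{+}}$ is a homeomorphism onto $\mathcal{M}_{0}$ with $m_{0}^{-1}(u)=u/\|u\|_{0}$; and $\|m_{0}(u_{n})\|_{0}$ and $I_{0}(m_{0}(u_{n}))$ diverge whenever ${\rm dist}(u_{n},\partial\mathbb{S}_{0}^{+})\to0$. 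With these facts in hand, Corollary $10$ in \cite{SW} yields at once $(a)$ and $(b)$, i.e.\ the $C^{1}$ regularity of $\hat{\psi}_{0}$ and of $\psi_{0}$ together with the two derivative formulas.

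Parts $(c)$ and $(d)$ I would derive from the identity in $(b)$. Decomposing an arbitrary $z\in\mathcal{H}_{0}$ as $z=v+tu_{n}$ with $v\in T_{u_{n}}\mathbb{S}_{0}^{+}$, and using $\langle I_{0}'(m_{0}(u_{n})),m_{0}(u_{n})\rangle=0$ (which holds because $m_{0}(u_{n})\in\mathcal{M}_{0}$), one obtains $\|\psi_{0}'(u_{n})\|_{*}=\|m_{0}(u_{n})\|_{0}\,\|I_{0}'(m_{0}(u_{n}))\|_{*}$; for a Palais-Smale sequence of $\psi_{0}$ the values $\psi_{0}(u_{n})$ remain bounded, so parts $(ii)$ and $(iv)$ of Lemma \ref{lem2.3AMAW} confine $\|m_{0}(u_{n})\|_{0}$ to a compact subinterval of $(0,\infty)$, whence $(m_{0}(u_{n}))$ is Palais-Smale for $I_{0}$, and conversely. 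The correspondence of critical points and the identity $\inf_{\mathbb{S}_{0}^{+}}\psi_{0}=\inf_{\mathcal{M}_{0}}I_{0}$ then follow from the same relation, exactly as in Proposition \ref{prop2.1MAW}, to which I would refer for the routine details. The only point requiring any care is this a priori control of $\|m_{0}(u_{n})\|_{0}$ near $\partial\mathbb{S}_{0}^{+}$, which is exactly what parts $(ii)$ and $(iv)$ of Lemma \ref{lem2.3AMAW} supply; everything else is bookkeeping already performed in the $\e$-dependent setting.
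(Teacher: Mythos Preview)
Your proposal is correct and matches the paper's approach exactly: the paper states that Proposition \ref{prop2.1AMAW} is a direct consequence of Lemma \ref{lem2.3AMAW} and Corollary $10$ in \cite{SW}, which is precisely the reduction you carry out. Your additional explanation of how parts $(c)$ and $(d)$ are read off from the derivative formula in $(b)$ and the bounds in Lemma \ref{lem2.3AMAW} is a welcome unpacking of what the paper leaves implicit.
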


\begin{remark}\label{rem3AMAW}
As in \cite{SW}, we have the following variational characterization of the infimum of $I_{0}$ over $\mathcal{M}_{0}$:
\begin{align*}
0<d_{0}&=\inf_{u\in \mathcal{M}_{0}} I_{0}(u)=\inf_{u\in \mathcal{H}_{0}^{+}} \max_{t>0} I_{0}(tu)=\inf_{u\in \mathbb{S}_{0}^{+}} \max_{t>0} I_{0}(tu).
\end{align*}
\end{remark}

\noindent
The next lemma allows us to assume that the weak limit of a Palais-Smale sequence is nontrivial.
\begin{lem}\label{LionsFS}
Let $(u_{n})\subset \mathcal{H}_{0}$ be a Palais-Smale sequence for $I_{0}$ at the level $d_{0}$. Then, one and only one of the following alternatives holds:
\begin{compactenum}[$(i)$]
\item $u_{n}\rightarrow 0$ in $\mathcal{H}_{0}$, 
\item there exist a sequence $(y_{n})\subset \R^{N}$ and constants $R, \beta>0$ such that
$$
\liminf_{n\rightarrow \infty} \int_{B_{R}(y_{n})} |u_{n}|^{2}dx\geq \beta>0.
$$
\end{compactenum}
\end{lem}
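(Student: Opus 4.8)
The plan is to prove this standard vanishing/non-vanishing dichotomy by combining Lions' concentration-compactness lemma (Lemma \ref{Lions}) with the compactness properties of the limiting functional $I_0$. First I would observe that a Palais-Smale sequence $(u_n)$ for $I_0$ at level $d_0$ is bounded in $\mathcal{H}_0$: this follows exactly as in the proof of Lemma \ref{PSc}, using $(f_3)$ to estimate $I_0(u_n)-\frac{1}{\theta}\langle I_0'(u_n),u_n\rangle \geq \left(\frac{1}{2}-\frac{1}{\theta}\right)\|u_n\|_0^2$, so that $C(1+\|u_n\|_0)\geq \left(\frac{\theta-2}{2\theta}\right)\|u_n\|_0^2$ and hence $(\|u_n\|_0)$ is bounded.

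Next I would argue by contradiction: suppose neither $(i)$ nor $(ii)$ holds. The negation of $(ii)$ is precisely the hypothesis of Lemma \ref{Lions}, namely $\lim_{n\to\infty}\sup_{y\in\R^N}\int_{B_R(y)}|u_n|^2\,dx=0$ for every $R>0$. Applying Lemma \ref{Lions} gives $u_n\to 0$ in $L^r(\R^N,\R)$ for all $r\in(2,\2)$. Choosing $r=q$ with $q$ as in $(f_2)$, and combining $(f_1)$, $(f_2)$ (so that $|f(t^2)t^2|\leq \delta t^2 + C_\delta |t|^q$ can be refined using that the $L^2$-mass does not vanish but the superquadratic part does), I would show $\int_{\R^N} F(u_n^2)\,dx\to 0$ and $\int_{\R^N} f(u_n^2)u_n^2\,dx\to 0$. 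The cleanest route: from $(f_1)$-$(f_2)$, for every $\delta>0$ there is $C_\delta$ with $f(t^2)t^2\leq \delta t^2 + C_\delta |t|^q$; using the boundedness of $(u_n)$ in $L^2$ (via $\|u\|_0$) and $u_n\to 0$ in $L^q$, one gets $\limsup_n \int f(u_n^2)u_n^2 \leq \delta \sup_n\|u_n\|_{L^2}^2$, and letting $\delta\to 0$ yields the claim; similarly for $F$ since $0\leq \frac{\theta}{2}F(t)\leq tf(t)$ by $(f_3)$.

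Then I would use $\langle I_0'(u_n),u_n\rangle = o_n(1)$, i.e. $\|u_n\|_0^2 = \int_{\R^N} f(u_n^2)u_n^2\,dx + o_n(1)\to 0$, so $u_n\to 0$ in $\mathcal{H}_0$; but then $I_0(u_n)\to 0$, contradicting $I_0(u_n)\to d_0>0$ (recall $d_0>0$ from Remark \ref{rem3AMAW}). Hence if $(i)$ fails, $(ii)$ must hold. Conversely, $(i)$ and $(ii)$ are clearly mutually exclusive, since $u_n\to 0$ in $\mathcal{H}_0\hookrightarrow L^2(\R^N)$ forces $\int_{B_R(y_n)}|u_n|^2\,dx\to 0$ uniformly in the choice of $y_n$. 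This gives the dichotomy.

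The main obstacle, though a mild one, is the careful handling of the nonlinear terms under only continuity of $f$: one must be sure that the bound $f(t^2)t^2\leq \delta t^2 + C_\delta|t|^q$ obtained from $(f_1)$-$(f_2)$ is uniform enough to pass to the limit, and that the superquadratic exponent $q$ lies strictly below $\2$ so that Lemma \ref{Lions} applies. There is no magnetic potential or nonlocal technicality here beyond what is already packaged in the embedding $H^s\hookrightarrow L^r$, so the argument is essentially the classical Lions-type dichotomy transplanted to the fractional Sobolev setting; the only genuine input is Lemma \ref{Lions} together with the fact that $d_0>0$.
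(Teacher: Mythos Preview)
Your proof is correct and essentially identical to the paper's: boundedness via $(f_3)$, Lions' lemma (Lemma \ref{Lions}) when $(ii)$ fails, growth estimates from $(f_1)$--$(f_2)$ to kill $\int f(u_n^2)u_n^2$, and then $\langle I_0'(u_n),u_n\rangle=o_n(1)$ forces $\|u_n\|_0\to 0$. The only cosmetic differences are that the paper phrases this directly as $\neg(ii)\Rightarrow(i)$ without invoking $d_0>0$ (your appeal to $I_0(u_n)\to 0\neq d_0$ is redundant once you have $u_n\to 0$ in $\mathcal{H}_0$), and you add the explicit mutual-exclusivity observation, which the paper leaves implicit.
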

\begin{proof}
Assume that $(ii)$ does not occur. Arguing as in the proof of Lemma \ref{PSc} we see that $(u_{n})$ is bounded in $\mathcal{H}_{0}$. Then we use Lemma \ref{Lions} to deduce that $u_{n}\rightarrow 0$ in $L^{r}(\R^{N}, \R)$ for all $r\in (2, \2)$. In view of $(f_1)$-$(f_2)$, we get $\int_{\R^{N}} f(u^{2}_{n})u^{2}_{n}dx=o_{n}(1)$. This fact combined with $\langle I'_{0}(u_{n}), u_{n}\rangle=o_{n}(1)$ yields $\|u_{n}\|^{2}_{0}=o_{n}(1)$. 
\end{proof}

\begin{remark}\label{rem5MAW}
As it has been mentioned, if $u$ is the weak limit of a Palais-Smale sequence $(u_{n})\subset \mathcal{H}_{0}$ of $I_{0}$ at the level $d_{0}$, then we can assume that $u\neq 0$. Otherwise, we would have $u_{n}\rightharpoonup 0$ in $\mathcal{H}_{0}$ and, if $u_{n}\nrightarrow 0$ in $\mathcal{H}_{0}$, we deduce from Lemma \ref{LionsFS} that there are $(y_{n})\subset \mathbb{R}^{N}$ and $R, \beta>0$ such that
\begin{equation*}
\liminf_{n\rightarrow \infty} \int_{B_{R}(y_{n})} u_{n}^{2} \,dx \geq \beta >0. 
\end{equation*}
Set $v_{n}(x)=u_{n}(x+y_{n})$. Then we see that $(v_{n})$ is a Palais-Smale sequence for $I_{0}$ at the level $d_{0}$, $(v_{n})$ is bounded in $\mathcal{H}_{0}$ and there exists $v\in \mathcal{H}_{0}$ such that $v_{n}\rightharpoonup v$ in $\mathcal{H}_{0}$ with $v\neq 0$.
\end{remark}

\noindent 
Next we prove an existence result for the scalar autonomous problem \eqref{AP0}.
\begin{thm}\label{FS}
Let $(u_{n})\subset \mathcal{H}_{0}$ be a Palais-Smale sequence of $I_{0}$ at the level $d_{0}$. Then there exists $u\in \mathcal{H}_{0}\setminus \{0\}$, with $u\geq 0$, such that, up to a subsequence, we have $u_{n}\ri u$ in $\mathcal{H}_{0}$. Moreover, $u$ is a positive ground state for \eqref{AP0}.
\end{thm}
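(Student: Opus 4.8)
The plan is a concentration-compactness argument along the lines of \cite{A1, AM}. First I would establish that $(u_{n})$ is bounded in $\mathcal{H}_{0}$: testing with $u_{n}$ and using $(f_{3})$ exactly as in the proof of Lemma \ref{PSc},
$$
C(1+\|u_{n}\|_{0})\geq I_{0}(u_{n})-\frac{1}{\theta}\langle I_{0}'(u_{n}), u_{n}\rangle\geq \Big(\frac{1}{2}-\frac{1}{\theta}\Big)\|u_{n}\|_{0}^{2},
$$
and $\theta>2$ gives boundedness. Passing to a subsequence, $u_{n}\rightharpoonup u$ in $\mathcal{H}_{0}$, $u_{n}\to u$ in $L^{r}_{loc}(\R^{N}, \R)$ for $r\in[1,\2)$ and a.e. in $\R^{N}$. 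Since $d_{0}>0$ the sequence cannot converge strongly to $0$, so Lemma \ref{LionsFS} puts us in alternative $(ii)$; replacing $u_{n}$ by the translates $u_{n}(\cdot+y_{n})$ --- again a Palais-Smale sequence for $I_{0}$ at level $d_{0}$ by translation invariance of the autonomous functional, see Remark \ref{rem5MAW} --- we may assume $u\neq 0$.

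Next I would check that $u$ solves \eqref{AP0}. For $\varphi\in C_{c}^{\infty}(\R^{N}, \R)$ the quadratic part of $\langle I_{0}'(u_{n}), \varphi\rangle$ converges by weak convergence, and $\int_{\R^{N}} f(u_{n}^{2})u_{n}\varphi\, dx\to \int_{\R^{N}} f(u^{2})u\varphi\, dx$ by the local compact embeddings of Theorem \ref{Sembedding} and the subcritical growth $(f_{1})$-$(f_{2})$; by density $I_{0}'(u)=0$, so $u\in\mathcal{M}_{0}$ and $I_{0}(u)\geq d_{0}$. For the reverse inequality I would apply Fatou's lemma to
$$
I_{0}(u_{n})-\frac{1}{2}\langle I_{0}'(u_{n}), u_{n}\rangle=\frac{1}{2}\int_{\R^{N}}\big[f(u_{n}^{2})u_{n}^{2}-F(u_{n}^{2})\big]\, dx,
$$
whose left side tends to $d_{0}$ and whose integrand is nonnegative by $(f_{3})$ and converges a.e.; this forces $\frac{1}{2}\int_{\R^{N}}[f(u^{2})u^{2}-F(u^{2})]\,dx\leq d_{0}$, and since the left side equals $I_{0}(u)-\frac{1}{2}\langle I_{0}'(u), u\rangle=I_{0}(u)$, we conclude $I_{0}(u)=d_{0}$.

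For the strong convergence I would set $w_{n}=u_{n}-u\rightharpoonup 0$ and split $\|u_{n}\|_{0}^{2}$, $\int F(u_{n}^{2})$ and $\int f(u_{n}^{2})u_{n}^{2}$ via the Brezis-Lieb lemma (legitimate by the subcritical growth and $\mathcal{H}_{0}\hookrightarrow L^{\2}$), obtaining $I_{0}(w_{n})\to I_{0}(u_{n})-I_{0}(u)=0$ and $\langle I_{0}'(w_{n}), w_{n}\rangle\to 0$. Then, using $(f_{3})$ once more,
$$
o_{n}(1)=I_{0}(w_{n})-\frac{1}{\theta}\langle I_{0}'(w_{n}), w_{n}\rangle\geq \Big(\frac{1}{2}-\frac{1}{\theta}\Big)\|w_{n}\|_{0}^{2},
$$
so $\|w_{n}\|_{0}\to 0$, i.e. $u_{n}\to u$ in $\mathcal{H}_{0}$; hence $I_{0}(u)=d_{0}$, $I_{0}'(u)=0$ and $u$ is a ground state. (Alternatively, one may prove $\|u_{n}\|_{0}\to\|u\|_{0}$ directly, combining the energy and derivative identities with Fatou's lemma and the Vitali convergence theorem.) Finally, for positivity, since $[\,|u|\,]\leq[u]$ for real-valued $u$ and $F(|u|^{2})=F(u^{2})$ one has $I_{0}(\tau|u|)\leq I_{0}(\tau u)$ for every $\tau>0$; it follows that any ground state is sign-definite, so replacing $(u_{n})$ by $(-u_{n})$ if necessary we may take $u\geq 0$, and standard regularity for \eqref{AP0} together with the strong maximum principle for $(-\Delta)^{s}$ (note $f>0$ on $(0,\infty)$ by $(f_{3})$) gives $u>0$ in $\R^{N}$.

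I expect the main obstacle to be the lack of compactness on the unbounded domain $\R^{N}$ --- the Palais-Smale sequence may ``vanish'' or lose mass at infinity --- so the only way to produce a usable limit is to exploit Lemma \ref{LionsFS} and the translation invariance of $I_{0}$; once a nontrivial weak limit is secured, the Ambrosetti-Rabinowitz condition $(f_{3})$ does the rest, both for the energy comparison $I_{0}(u)=d_{0}$ and for the strong convergence.
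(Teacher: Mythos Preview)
Your proof is correct and follows essentially the same route as the paper: boundedness via $(f_3)$, translation to a nontrivial weak limit through Lemma \ref{LionsFS}/Remark \ref{rem5MAW}, and a Fatou-type energy comparison to pin down $I_0(u)=d_0$. The paper differs only in two minor technical choices---it reads off strong convergence directly from equality in the Fatou chain (which forces $\liminf\|u_n\|_0^2=\|u\|_0^2$) rather than via a Brezis--Lieb splitting, and it obtains $u\geq 0$ by testing the equation against $u^{-}$ rather than by comparing $I_0(\tau|u|)$ with $I_0(\tau u)$.
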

\begin{proof}
Arguing as in the proof of Lemma \ref{PSc}, we can see that $(u_{n})$ is bounded in $\mathcal{H}_{0}$. Therefore, up to going a subsequence, we may assume that $u_{n}\rightharpoonup u$ in $\mathcal{H}_{0}$. Standard arguments show that $u$ is a critical point of $I_{0}$. In light of Remark \ref{rem5MAW}, we may assume that $u$ is not trivial. 
Hence, $u\in \M_{0}$. Now we prove that $I_{0}(u)=d_{0}$. Indeed, by $u\in \M_{0}$, $(f_3)$ and Fatou's Lemma, we have
\begin{align*}
d_{0}&\leq I_{0}(u)-\frac{1}{\theta}\langle I'_{0}(u), u\rangle\\
&\leq \liminf_{n\rightarrow \infty} \left[\left(\frac{1}{2}-\frac{1}{\theta}\right)\|u_{n}\|_{0}^{2}+\int_{\R^{N}} \frac{1}{\theta} f(u^{2}_{n})u^{2}_{n}- \frac{1}{2} F(u^{2}_{n}) \,dx\right] \\
&=\liminf_{n\rightarrow \infty} \left[I_{0}(u_{n})-\frac{1}{\theta}\langle I'_{0}(u_{n}), u_{n}\rangle\right] \\
&=d_{0}.
\end{align*}
Since $\langle I'_{0}(u), u^{-}\rangle=0$, where $u^{-}=\min\{u, 0\}$, and using $(f_1)$, we can see that $u\geq 0$ in $\R^{N}$. Performing a standard Moser iteration argument (see Proposition $5.1.1$ in \cite{DMV}) we see that $u\in L^{\infty}(\R^{N}, \R)$ and applying Proposition $2.9$ in \cite{Silvestre} we infer that $u\in C^{0, \alpha}(\R^{N}, \R)$ for some $\alpha\in (0, 1)$. By the strong maximum principle (see Theorem $1.4$ in \cite{DPQJDE}), we deduce that $u>0$ in $\R^{N}$.
\end{proof}

The next lemma is a compactness result for the autonomous problem which will be used later.
\begin{lem}\label{lem3.3MAW}
Let $(u_{n})\subset \mathcal{M}_{0}$ be a sequence such that $I_{0}(u_{n})\rightarrow d_{0}$. Then  
$(u_{n})$ has a convergent subsequence in $\mathcal{H}_{0}$.
\end{lem}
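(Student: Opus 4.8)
The plan is to transport the minimizing sequence from the Nehari manifold $\mathcal{M}_0$ to the unit sphere $\mathbb{S}_0^+$ by means of the homeomorphism $m_0$ of Lemma \ref{lem2.3AMAW}, to produce there a Palais--Smale sequence for $\psi_0$ via Ekeland's variational principle, and finally to invoke the compactness result Theorem \ref{FS} for Palais--Smale sequences of $I_0$ at the ground state level $d_0$. Concretely, I would set $v_n:=m_0^{-1}(u_n)=u_n/\|u_n\|_0\in\mathbb{S}_0^+$. By the definitions of $\hat{m}_0$ and $\psi_0$ together with Remark \ref{rem3AMAW}, $\psi_0(v_n)=I_0(u_n)\to d_0=\inf_{\mathbb{S}_0^+}\psi_0$, so $(v_n)$ is a minimizing sequence for $\psi_0$.

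The delicate point is that $\mathbb{S}_0^+$ is an incomplete manifold, so Ekeland's principle cannot be applied naively; this is exactly where Lemma \ref{lem2.3AMAW}-$(iv)$ enters. Since $\psi_0(v_n)=I_0(m_0(v_n))$ stays bounded, that lemma forces $\liminf_{n}{\rm dist}(v_n,\partial\mathbb{S}_0^+)>0$, so there is $\rho_0>0$ with ${\rm dist}(v_n,\partial\mathbb{S}_0^+)\geq\rho_0$ for all large $n$; hence $(v_n)$ lives in the \emph{complete} metric space $\{v\in\mathbb{S}_0^+:{\rm dist}(v,\partial\mathbb{S}_0^+)\geq\rho_0\}$, on which Ekeland's variational principle applies (cf. \cite{SW}). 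This yields $(w_n)\subset\mathbb{S}_0^+$ with $\psi_0(w_n)\to d_0$, $\psi_0'(w_n)\to 0$ and $\|w_n-v_n\|_0\to 0$, i.e. a Palais--Smale sequence for $\psi_0$ at level $d_0$. By Proposition \ref{prop2.1AMAW}-$(c)$, $(m_0(w_n))\subset\mathcal{M}_0$ is then a Palais--Smale sequence for $I_0$ at level $d_0$, and Theorem \ref{FS} provides $u^{*}\in\mathcal{H}_0\setminus\{0\}$ with $m_0(w_n)\to u^{*}$ in $\mathcal{H}_0$ along a subsequence.

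It remains to transport this convergence back. Since $m_0^{-1}(u)=u/\|u\|_0$ is continuous, $w_n=m_0^{-1}(m_0(w_n))\to m_0^{-1}(u^{*})$ in $\mathbb{S}_0^+$, and $\|w_n-v_n\|_0\to 0$ gives $v_n\to m_0^{-1}(u^{*})$; applying the continuous map $m_0$ (Lemma \ref{lem2.3AMAW}-$(iii)$) we conclude $u_n=m_0(v_n)\to m_0(m_0^{-1}(u^{*}))=u^{*}$ in $\mathcal{H}_0$, which is the claim. The only genuine obstacle in this scheme is the incompleteness of $\mathbb{S}_0^+$, handled as above through Lemma \ref{lem2.3AMAW}-$(iv)$; everything else is a routine passage back and forth through the homeomorphisms $m_0,m_0^{-1}$ combined with the already established strong compactness of ground-state Palais--Smale sequences for $I_0$.
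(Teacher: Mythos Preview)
Your proof is correct and follows the same scheme as the paper's: transport the minimizing sequence to $\mathbb{S}_0^+$ via $m_0^{-1}$, produce a nearby Palais--Smale sequence for $\psi_0$ via Ekeland's principle, push it forward to a Palais--Smale sequence for $I_0$ at level $d_0$ through Proposition \ref{prop2.1AMAW}-$(c)$, apply Theorem \ref{FS}, and transport convergence back with the continuity of $m_0$. The only difference is cosmetic, in how the incompleteness of $\mathbb{S}_0^+$ is handled---the paper extends $\psi_0$ to $+\infty$ on $\partial\mathbb{S}_0^+$ (continuous by Lemma \ref{lem2.3AMAW}-$(iv)$) and runs Ekeland on the complete space $\overline{\mathbb{S}}_0^+$, whereas you restrict to a closed set bounded away from the boundary; both are standard devices and yield the same conclusion.
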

\begin{proof}
Since $(u_{n})\subset \mathcal{M}_{0}$ and $I_{0}(u_{n})\rightarrow d_{0}$, we can apply Lemma \ref{lem2.3AMAW}-$(iii)$, Proposition \ref{prop2.1AMAW}-$(d)$ and the definition of $d_{0}$ to infer that
$$
\nu_{n}=m^{-1}_{0}(u_{n})=\frac{u_{n}}{\|u_{n}\|_{0}}\in \mathbb{S}_{0}^{+}
$$
and
$$
\psi_{0}(\nu_{n})=I_{0}(u_{n})\rightarrow d_{0}=\inf_{v\in \mathbb{S}_{0}^{+}}\psi_{0}(v).
$$
Let us introduce the following map $\mathcal{F}: \overline{\mathbb{S}}_{0}^{+}\rightarrow \mathbb{R}\cup \{\infty\}$ defined by setting
$$
\mathcal{F}(u)=
\begin{cases}
\psi_{0}(u)& \text{ if $u\in \mathbb{S}_{0}^{+}$}, \\
\infty   & \text{ if $u\in \partial \mathbb{S}_{0}^{+}$}.
\end{cases}
$$ 
We note that 
\begin{itemize}
\item $(\overline{\mathbb{S}}_{0}^{+}, \delta_{0})$, where $\delta_{0}(u, v)=\|u-v\|_{0}$, is a complete metric space;
\item $\mathcal{F}\in C(\overline{\mathbb{S}}_{0}^{+}, \mathbb{R}\cup \{\infty\})$, by Lemma \ref{lem2.3AMAW}-$(iv)$;
\item $\mathcal{F}$ is bounded below, by Proposition \ref{prop2.1AMAW}-$(d)$.
\end{itemize}
By applying Ekeland's variational principle \cite{Ek}, we can find a Palais-Smale sequence $(\hat{v}_{n})\subset \mathbb{S}_{0}^{+}$ for $\psi_{0}$ at the level $d_{0}$ and such that $\|\hat{v}_{n}-\nu_{n}\|_{0}=o_{n}(1)$.
Now the remainder of the proof follows from Proposition \ref{prop2.1AMAW}, Theorem \ref{FS} and arguing as in the proof of Corollary \ref{cor2.1MAW}. 
\end{proof}

\noindent
Finally, we prove the following interesting relation between $c_{\e}$ and $d_{0}$.
\begin{lem}\label{AMlem1}
The numbers $c_{\e}$ and $d_{0}$ satisfy the following inequality:
$$
\limsup_{\e\rightarrow 0} c_{\e}\leq d_{0}.
$$
\end{lem}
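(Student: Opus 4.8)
The plan is to build a suitable test function for $J_\e$ from the positive ground state $w$ of the limiting problem \eqref{AP0}, localized inside $\Lambda_\e$, and to show that as $\e\to 0$ the energy of this function converges to $d_0$; since $c_\e$ is the mountain pass (equivalently, the Nehari minimum) level for $J_\e$, this will give $\limsup_{\e\to0} c_\e \le d_0$. First I would fix, via Theorem \ref{FS}, a positive ground state $w\in \mathcal H_0$ of \eqref{AP0}, so that $I_0(w)=d_0$ and $w\in \M_0$. Recalling that $0\in M$ and that $M\subset \Lambda$ with $\Lambda$ open, choose $\rho>0$ with $B_{2\rho}\subset\Lambda$, pick a cutoff $\eta\in C_c^\infty(\R^N,[0,1])$ with $\eta\equiv 1$ on $B_\rho$ and $\supp\eta\subset B_{2\rho}$, and set $w_\e(x)=\eta(\e x)\,w(x)$. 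This has compact support, and for $\e$ small its support is contained in $\Lambda_\e=\{x:\e x\in\Lambda\}$, hence $g_\e(x,|w_\e|^2)=f(|w_\e|^2)$ on $\supp w_\e$. By Lemma \ref{aux}, the function $v_\e=e^{\imath A(0)\cdot x}w_\e$ belongs to $\h$, and one checks $v_\e\in\mathcal H_\e^+$ since $|v_\e|=|w_\e|\not\equiv 0$ on $\Lambda_\e$.

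Next I would show $\|v_\e\|_\e^2\to \|w\|_0^2$ and $\int_{\R^N}G_\e(x,|v_\e|^2)\,dx=\int_{\R^N}F(|w_\e|^2)\,dx\to\int_{\R^N}F(w^2)\,dx$ as $\e\to0$. For the potential term, $\int V_\e(x)|v_\e|^2\,dx=\int V(\e x)\eta(\e x)^2 w^2\,dx\to V(0)\int w^2\,dx=V_0\int w^2\,dx$ by dominated convergence and $V(0)=V_0$ (since $0\in M$), using $(V_1)$ for the domination. The delicate term is the magnetic Gagliardo seminorm $[v_\e]_{A_\e}^2$: one writes
$$
v_\e(x)-e^{\imath(x-y)\cdot A_\e(\frac{x+y}{2})}v_\e(y)
= e^{\imath A(0)\cdot x}\Bigl(w_\e(x)-w_\e(y)\Bigr)
+ e^{\imath A(0)\cdot x}w_\e(y)\Bigl(1-e^{\imath(x-y)\cdot(A_\e(\frac{x+y}{2})-A(0))}\Bigr),
$$
so that $[v_\e]_{A_\e}^2\le (1+\xi)[w_\e]^2+C_\xi\,\mathcal E_\e$ for every $\xi>0$, where $\mathcal E_\e=\iint_{\R^{2N}}\frac{|w_\e(y)|^2\,|1-e^{\imath(x-y)\cdot(A_\e(\frac{x+y}{2})-A(0))}|^2}{|x-y|^{N+2s}}\,dx\,dy$. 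Using $|1-e^{\imath t}|\le |t|$, the H\"older continuity of $A$ with exponent $\alpha$, and the fact that $A(\e\frac{x+y}{2})-A(0)=O(\e^\alpha(|x|^\alpha+|y|^\alpha))$ on the support, one estimates $\mathcal E_\e\to0$ as $\e\to0$ (splitting $|x-y|\le 1$ and $|x-y|>1$, as in the computation already carried out in Lemma \ref{PSc}); combined with $[w_\e]\to[w]$ (a standard cutoff estimate for $w\in\mathcal H_0$) and the matching lower bound obtained symmetrically, this yields $[v_\e]_{A_\e}^2\to[w]^2$. Hence $\|v_\e\|_\e^2\to\|w\|_0^2$.

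Finally, by Remark \ref{rem3MAW}, $c_\e=\inf_{u\in\mathcal H_\e^+}\max_{t>0}J_\e(tu)\le \max_{t>0}J_\e(tv_\e)$. For each $\e$ let $t_\e>0$ be the unique maximizer from Lemma \ref{lem2.3MAW}-$(i)$, so $t_\e v_\e\in\N_\e$; using $\|v_\e\|_\e\to\|w\|_0$, $(f_3)$ and the uniform bounds of Lemma \ref{lem2.3MAW}-$(ii)$ one shows the family $(t_\e)$ stays in a compact subset of $(0,\infty)$, and any limit point $t_0$ must satisfy, by passing to the limit in $\|v_\e\|_\e^2=\int f(|t_\e v_\e|^2)|v_\e|^2$ (legitimate since $v_\e\to w$ strongly enough in $L^2\cap L^q$ by the convergences above and dominated convergence), the identity $\|w\|_0^2=\int f(t_0^2 w^2)w^2$, i.e.\ $t_0 w\in\M_0$; since $w\in\M_0$, uniqueness in Lemma \ref{lem2.3AMAW}-$(i)$ forces $t_0=1$. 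Therefore
$$
\limsup_{\e\to0}c_\e\le \limsup_{\e\to0}J_\e(t_\e v_\e)=I_0(w)=d_0.
$$
The main obstacle is the control of the magnetic seminorm $[v_\e]_{A_\e}^2$: the phase factor $e^{\imath(x-y)\cdot A_\e(\frac{x+y}{2})}$ must be compared with the ``frozen'' phase $e^{\imath A(0)\cdot x}$, and making the error term $\mathcal E_\e$ vanish requires exploiting the H\"older regularity of $A$ together with a careful near-diagonal/far-diagonal splitting of the double integral; everything else is a routine dominated-convergence argument.
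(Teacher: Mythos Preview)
Your approach is the same as the paper's: localize the autonomous ground state $w$ near the origin, twist by $e^{\imath A(0)\cdot x}$, show $\|v_\e\|_\e^2\to\|w\|_0^2$, then pass to the limit along the Nehari fiber to force $t_\e\to 1$ and conclude. The paper does exactly this. However, your sketch of $\mathcal E_\e\to 0$ has a technical gap. A fixed splitting at $|x-y|=1$ together with the bound $|1-e^{\imath t}|\le |t|$ does not work: on the region $|x-y|>1$ the phase estimate produces a factor $|x-y|^{2}|A_\e(\tfrac{x+y}{2})-A(0)|^{2}$ in the numerator, and the resulting radial integral $\int_{|z|>1}|z|^{-(N+2s-2-2\alpha)}\,dz$ diverges because $s<1\le 1+\alpha$. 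The paper instead splits at the $\e$-dependent scale $|x-y|=\e^{-\beta}$ with $0<\beta<\alpha/(1+\alpha-s)$: on the far part one uses only $|1-e^{\imath t}|\le 2$ to obtain $C\e^{2\beta s}\to 0$, while on the near part the H\"older bound on $A$ gives $C\e^{2\alpha-2\beta(1+\alpha-s)}\to 0$.

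A second omission is the power-type decay $0<w(x)\le C|x|^{-(N+2s)}$ for $|x|>1$ (available from \cite{FQT}), which the paper invokes to ensure $\int_{\R^N}|y|^{2\alpha}w(y)^{2}\,dy<\infty$. Without it the near-diagonal estimate degrades to an $O(1)$ bound, since the support of $\eta(\e\cdot)$ has radius of order $\e^{-1}$ and the crude bound $(|y|+1)^{2\alpha}\le C\e^{-2\alpha}$ exactly cancels the gain $\e^{2\alpha}$. Finally, Lemma~\ref{PSc} contains no such computation to refer to. Everything else in your outline (the convergence of the potential term, $[\eta(\e\cdot)w]\to[w]$, boundedness and limit identification of $t_\e$) is correct and matches the paper.
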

\begin{proof}
Let $w\in H^{s}(\R^{N}, \R)$ be a positive ground state to \eqref{AP0}, so that $I'_{0}(w)=0$ and $I_{0}(w)=d_{0}$, and let $\eta\in C^{\infty}_{c}(\R^{N}, [0,1])$ be a cut-off function such that $\eta=1$ in $B_{\frac{\delta}{2}}$ and $\supp(\eta)\subset B_{\delta}\subset \Lambda$ for some $\delta>0$. 
We recall that the existence of $w$ is guaranteed by Theorem \ref{FS}. Moreover, from Theorem $1.5$ in \cite{FQT}, we know that $w$ 
satisfies the following power-type decay estimate: 
\begin{equation}\label{remdecay}
0<w(x)\leq \frac{C}{|x|^{N+2s}} \quad \mbox{ for all } |x|>1.
\end{equation}

Let us define $w_{\e}(x)=\eta_{\e}(x)w(x) e^{\imath A(0)\cdot x}$, with $\eta_{\e}(x)=\eta(\e x)$ for $\e>0$, and we observe that $|w_{\e}|=\eta_{\e}w$ and $w_{\e}\in \h$ in light of Lemma \ref{aux}. Next we claim that
\begin{equation}\label{limwr}
\lim_{\e\rightarrow 0}\|w_{\e}\|^{2}_{\e}=\|w\|_{0}^{2}\in(0, \infty).
\end{equation}
Clearly, $\int_{\R^{N}} V_{\e}(x)|w_{\e}|^{2}dx\rightarrow \int_{\R^{N}} V_{0} |w|^{2}dx$. Then, it remains to show that
\begin{equation}\label{limwr*}
\lim_{\e\rightarrow 0}[w_{\e}]^{2}_{A_{\e}}=[w]^{2}.
\end{equation}
Using Lemma $5$ in \cite{PP}, we know that 
\begin{equation}\label{PPlem}
[\eta_{\e} w]\rightarrow [w] \mbox{ as } \e\rightarrow 0.
\end{equation}
On the other hand
\begin{align*}
[w_{\e}]_{A_{\e}}^{2}
&=\iint_{\R^{2N}} \frac{|e^{\imath A(0)\cdot x}\eta_{\e}(x)w(x)-e^{\imath A_{\e}(\frac{x+y}{2})\cdot (x-y)}e^{\imath A(0)\cdot y} \eta_{\e}(y)w(y)|^{2}}{|x-y|^{N+2s}} dx dy \nonumber \\
&=[\eta_{\e} w]^{2}
+\iint_{\R^{2N}} \frac{\eta_{\e}^2(y)w^2(y) |e^{\imath [A_{\e}(\frac{x+y}{2})-A(0)]\cdot (x-y)}-1|^{2}}{|x-y|^{N+2s}} dx dy\\
&\quad+2\Re \iint_{\R^{2N}} \frac{(\eta_{\e}(x)w(x)-\eta_{\e}(y)w(y))\eta_{\e}(y)w(y)(1-e^{-\imath [A_{\e}(\frac{x+y}{2})-A(0)]\cdot (x-y)})}{|x-y|^{N+2s}} dx dy \\
&= [\eta_{\e} w]^{2}+X_{\e}+2Y_{\e}.
\end{align*}
Then, in view of 
$|Y_{\e}|\leq [\eta_{\e} w] \sqrt{X_{\e}}$ and \eqref{PPlem}, it is suffices to prove that $X_{\e}\rightarrow 0$ as $\e\rightarrow 0$ to deduce that \eqref{limwr*} is satisfied.
Let us note that for $0<\beta<\alpha/({1+\alpha-s})$, 
\begin{equation}\label{Ye}
\begin{split}
X_{\e}
&\leq \int_{\R^{N}} w^{2}(y) dy \int_{|x-y|\geq\e^{-\beta}} \frac{|e^{\imath [A_{\e}(\frac{x+y}{2})-A(0)]\cdot (x-y)}-1|^{2}}{|x-y|^{N+2s}} dx\\
&\quad +\int_{\R^{N}} w^{2}(y) dy  \int_{|x-y|<\e^{-\beta}} \frac{|e^{\imath [A_{\e}(\frac{x+y}{2})-A(0)]\cdot (x-y)}-1|^{2}}{|x-y|^{N+2s}} dx\\
&=X^{1}_{\e}+X^{2}_{\e}.
\end{split}
\end{equation}
Using $|e^{\imath t}-1|^{2}\leq 4$ and $w\in H^{s}(\R^{N}, \R)$, we get
\begin{equation}\label{Ye1}
X_{\e}^{1}\leq C \int_{\R^{N}} w^{2}(y) dy \int_{\e^{-\beta}}^\infty \rho^{-1-2s} d\rho\leq C \e^{2\beta s} \rightarrow 0.
\end{equation}
Since $|e^{\imath t}-1|^{2}\leq t^{2}$ for all $t\in \R$, $A\in C^{0,\alpha}(\R^N,\R^N)$ with $\alpha\in(0,1]$, and $|x+y|^{2}\leq 2(|x-y|^{2}+4|y|^{2})$, we have
\begin{equation}\label{Ye2}
\begin{split}
X^{2}_{\e}&
	\leq \int_{\R^{N}} w^{2}(y) dy  \int_{|x-y|<\e^{-\beta}} \frac{|A_{\e}\left(\frac{x+y}{2}\right)-A(0)|^{2} }{|x-y|^{N+2s-2}} dx \\
	&\leq C\e^{2\alpha} \int_{\R^{N}} w^{2}(y) dy  \int_{|x-y|<\e^{-\beta}} \frac{|x+y|^{2\alpha} }{|x-y|^{N+2s-2}} dx \\
	&\leq C\e^{2\alpha} \left(\int_{\R^{N}} w^{2}(y) dy  \int_{|x-y|<\e^{-\beta}} \frac{1 }{|x-y|^{N+2s-2-2\alpha}} dx\right.\\
	&\qquad \qquad+ \left. \int_{\R^{N}} |y|^{2\alpha} w^{2}(y) dy  \int_{|x-y|<\e^{-\beta}} \frac{1}{|x-y|^{N+2s-2}} dx\right) \\
	&= C\e^{2\alpha} (X^{2, 1}_{\e}+X^{2, 2}_{\e}).
	\end{split}
	\end{equation}	
	Then
	\begin{equation}\label{Ye21}
	X^{2, 1}_{\e}
	= C  \int_{\R^{N}} w^{2}(y) dy \int_0^{\e^{-\beta}} \rho^{1+2\alpha-2s} d\rho
	\leq C\e^{-2\beta(1+\alpha-s)}.
	\end{equation}
	On the other hand, using \eqref{remdecay}, we infer that
	\begin{equation}\label{Ye22}
	\begin{split}
	 X^{2, 2}_{\e}
	 &\leq C  \int_{\R^{N}} |y|^{2\alpha} w^{2}(y) dy \int_0^{\e^{-\beta}}\rho^{1-2s} d\rho  \\
	&\leq C \e^{-2\beta(1-s)} \left[\int_{B_1}  w^{2}(y) dy + \int_{\R^{N}\setminus B_{1}} \frac{1}{|y|^{2(N+2s)-2\alpha}} dy \right]  \\
	&\leq C \e^{-2\beta(1-s)}.
	\end{split}
	\end{equation}
	Taking into account \eqref{Ye}, \eqref{Ye1}, \eqref{Ye2}, \eqref{Ye21} and \eqref{Ye22} we can conclude that $X_{\e}\rightarrow 0$. Therefore \eqref{limwr} holds.
Now, let $t_{\e}>0$ be the unique number such that 
\begin{equation*}
J_{\e}(t_{\e} w_{\e})=\max_{t\geq 0} J_{\e}(t w_{\e}).
\end{equation*}
Then $t_{\e}$ satisfies 
\begin{equation}\label{AS1}
\|w_{\e}\|_{\e}^{2}=\int_{\R^{N}} g_{\e}(x, t_{\e}^{2} |w_{\e}|^{2}) |w_{\e}|^{2}dx=\int_{\R^{N}} f(t_{\e}^{2} |w_{\e}|^{2}) |w_{\e}|^{2}dx
\end{equation}
where we used $\supp(\eta)\subset \Lambda$ and $g=f$ on $\Lambda\times \R$.
Let us prove that $t_{\e}\rightarrow 1$ as $\e\rightarrow 0$. Using $\eta=1$ in $B_{\frac{\delta}{2}}$ and $(f_4)$ we find
$$
\|w_{\e}\|_{\e}^{2}\geq f(t_{\e}^{2}\alpha^{2}_{0})\int_{B_{\frac{\delta}{2}}}|w|^{2}\,dx, 
$$
where $\alpha_{0}=\min_{\overline{B}_{\delta/2}} w>0$ (remark that $w$ is a continuous positive function). So, if $t_{\e}\rightarrow \infty$ as $\e\rightarrow 0$, then we can use $(f_3)$ to deduce that $\|w\|_{0}^{2}= \infty$, which gives a contradiction by \eqref{limwr}.
On the other hand, if $t_{\e}\rightarrow 0$ as $\e\rightarrow 0$, we can exploit the growth assumptions on $f$ and \eqref{AS1} to infer that $\|w\|_{0}^{2}= 0$ which is in contrast with \eqref{limwr}.
In conclusion, $t_{\e}\rightarrow t_{0}\in (0, \infty)$ as $\e\rightarrow 0$.
Now, taking the limit as $\e\rightarrow 0$ in \eqref{AS1} and using \eqref{limwr}, we can see that 
\begin{equation}\label{AS2}
\|w\|_{0}^{2}=\int_{\R^{N}} f(t_{0}^{2}w^{2}) w^{2}\,dx.
\end{equation}
In view of $w\in \mathcal{M}_{0}$ and $(f_4)$, we have that $t_{0}=1$. Then, by \eqref{limwr}, $t_{\e}\rightarrow 1$ and applying the dominated convergence theorem, we obtain that $\lim_{\e\rightarrow 0} J_{\e}(t_{\e} w_{\e})=I_{0}(w)=d_{0}$.
Since $c_{\e}\leq \max_{t\geq 0} J_{\e}(t w_{\e})=J_{\e}(t_{\e} w_{\e})$, we can conclude  that
$\limsup_{\e\rightarrow 0} c_{\e}\leq d_{0}$.
\end{proof}



\section{Multiplicity result for the modified problem}
In this section we make use of the Ljusternik-Schnirelman category theory to obtain multiple solutions to \eqref{MPe}. 
In particular, we relate the number of positive solutions of \eqref{MPe} to the topology of the set $M$. To do this, we introduce some useful tools needed to implement the barycenter machinery below.
Let $\delta>0$ be such that
$$
M_{\delta}=\{x\in \R^{N}: {\rm dist}(x, M)\leq \delta\}\subset \Lambda,
$$
and consider a smooth nonincreasing function $\eta:[0, \infty)\rightarrow \R$ such that $\eta(t)=1$ if $0\leq t\leq \frac{\delta}{2}$, $\eta(t)=0$ if $t\geq \delta$, $0\leq \eta\leq 1$ and $|\eta'(t)|\leq c$ for some $c>0$.
For any $y\in M$, we introduce 
$$
\Psi_{\e, y}(x)=\eta(|\e x-y|) w\left(\frac{\e x-y}{\e}\right)e^{\imath \tau_{y} \left( \frac{\e x-y}{\e} \right)},
$$
where $\tau_{y}(x)=\sum_{j=1}^{N} A_{j}(y)x_{j}$ and $w\in \mathcal{H}_{0}$ is a positive ground state solution to the autonomous problem \eqref{AP0} whose existence is guaranteed by Theorem \ref{FS}. Let $t_{\e}>0$ be the unique number such that 
$$
J_{\e}(t_{\e} \Psi_{\e, y})=\max_{t\geq 0} J_{\e}(t \Psi_{\e, y}). 
$$
Finally, we consider $\Phi_{\e}: M\rightarrow \N_{\e}$ defined by setting
$$
\Phi_{\e}(y)= t_{\e} \Psi_{\e, y}.
$$
\begin{lem}\label{lem3.4}
The functional $\Phi_{\e}$ satisfies the following limit
\begin{equation*}
\lim_{\e\rightarrow 0} J_{\e}(\Phi_{\e}(y))=d_{0} \quad \mbox{ uniformly in } y\in M.
\end{equation*}
\end{lem}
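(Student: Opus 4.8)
The plan is to follow the now-standard blueprint (as in \cite{AFF, A1, Acpde}): fix $y\in M$, evaluate $J_{\e}(\Phi_{\e}(y))=J_{\e}(t_{\e}\Psi_{\e,y})$ and show that the scaling parameter $t_{\e}$ tends to $1$ while $J_{\e}(t_{\e}\Psi_{\e,y})$ converges to $I_{0}(w)=d_{0}$, all uniformly in $y\in M$. The first step is a change of variables: setting $z=\frac{\e x-y}{\e}$ (so $x=\frac{z\e+y}{\e}$), one rewrites $\|\Psi_{\e,y}\|_{\e}^{2}$ and the potential term. As in the proof of Lemma~\ref{AMlem1}, the key estimate is that
$$
\lim_{\e\rightarrow 0}\|\Psi_{\e,y}\|_{\e}^{2}=\|w\|_{0}^{2}\in(0,\infty)\quad\text{uniformly in } y\in M.
$$
The potential part gives $\int_{\R^{N}}V(\e x)\eta^{2}(|\e x-y|)w^{2}\big(\tfrac{\e x-y}{\e}\big)\,dx\to\int_{\R^{N}}V_{0}w^{2}\,dx$ uniformly in $y\in M$ because $\e z+y\to y\in M$ and $V$ is continuous with $V(y)=V_{0}$ on $M$; uniformity follows from the compactness of $M$ (or of $\overline{M}$) together with the decay estimate \eqref{remdecay} on $w$, which kills the tails. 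The magnetic Gagliardo part is handled exactly as in Lemma~\ref{AMlem1}: write $[\Psi_{\e,y}]_{A_{\e}}^{2}=[\eta_{\e}w]^{2}+X_{\e}+2Y_{\e}$ after factoring out the phase $e^{\imath\tau_{y}}$, use $|Y_{\e}|\le[\eta_{\e}w]\sqrt{X_{\e}}$, and estimate $X_{\e}\to0$ by splitting the integration region at $|x-y|=\e^{-\beta}$ with $0<\beta<\alpha/(1+\alpha-s)$, exploiting the Hölder continuity of $A$ with exponent $\alpha$ and the decay of $w$; all bounds are uniform in $y\in M$ since $A_{j}(y)$ ranges over a bounded set and $|\tau_{y}(z)-\tau_{y}(0)|$ is controlled linearly. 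One also uses Lemma~5 in \cite{PP} to get $[\eta_{\e}w]\to[w]$.

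Next, since $\supp(\eta)\subset B_{\delta}\subset\Lambda$ and $M_\delta\subset\Lambda$, for $\e$ small the support of $\Psi_{\e,y}$ lies in $\Lambda_{\e}$, so $g_{\e}=f$ there and the Nehari identity for $t_{\e}$ reads
$$
\|\Psi_{\e,y}\|_{\e}^{2}=\int_{\R^{N}} f\big(t_{\e}^{2}|\Psi_{\e,y}|^{2}\big)|\Psi_{\e,y}|^{2}\,dx.
$$
Using $\eta\equiv1$ on $B_{\delta/2}$ together with $(f_4)$ one bounds the right side below by $f(t_{\e}^{2}\alpha_{0}^{2})\int_{B_{\delta/2}(0)}w^{2}\,dx$ with $\alpha_{0}=\min_{\overline{B}_{\delta/2}}w>0$ (independent of $y$); combined with the uniform bound on $\|\Psi_{\e,y}\|_{\e}^{2}$ and $(f_3)$, this rules out $t_{\e}\to\infty$. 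Similarly $(f_1)$--$(f_2)$ and the growth of $f$ at $0$ rule out $t_{\e}\to0$. Hence $t_{\e}$ stays in a compact subset of $(0,\infty)$; passing to the limit in the Nehari identity and using the change of variables plus dominated convergence (with majorant built from \eqref{remdecay}) gives $\|w\|_{0}^{2}=\int_{\R^{N}}f(t_{0}^{2}w^{2})w^{2}\,dx$ for any limit point $t_{0}$, and $w\in\mathcal{M}_{0}$ with $(f_4)$ forces $t_{0}=1$. Since the limit is independent of the subsequence and of $y$, $t_{\e}\to1$ uniformly in $y\in M$.

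Finally, with $t_{\e}\to1$ uniformly and $\|\Psi_{\e,y}\|_{\e}^{2}\to\|w\|_{0}^{2}$ uniformly, the change of variables gives
$$
J_{\e}(t_{\e}\Psi_{\e,y})=\frac{t_{\e}^{2}}{2}\|\Psi_{\e,y}\|_{\e}^{2}-\frac{1}{2}\int_{\R^{N}}F\big(t_{\e}^{2}\eta^{2}(|\e z|)w^{2}(z)\big)\,dz\longrightarrow\frac{1}{2}\|w\|_{0}^{2}-\frac{1}{2}\int_{\R^{N}}F(w^{2})\,dz=I_{0}(w)=d_{0},
$$
where the convergence of the $F$-integral follows from dominated convergence, the continuity of $F$, $\eta_{\e}\to1$ pointwise, and the decay \eqref{remdecay}, all uniform in $y\in M$. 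I expect the main obstacle to be establishing the \emph{uniformity in $y\in M$} of all these limits — in particular controlling the magnetic term $X_{\e}$ and the tail contributions of $w$ simultaneously over all $y$ — but this is resolved by the compactness of $M$, the boundedness of $\{A(y):y\in M\}$, and the polynomial decay \eqref{remdecay} of $w$; the argument is essentially a $y$-uniform rerun of Lemma~\ref{AMlem1}.
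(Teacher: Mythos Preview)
Your proposal is correct and follows essentially the same approach as the paper: convergence of $\|\Psi_{\e,y}\|_\e^2$ to $\|w\|_0^2$ via the magnetic decomposition of Lemma~\ref{AMlem1}, boundedness of $t_\e$ away from $0$ and $\infty$ using $(f_3)$--$(f_4)$ and the positive lower bound $\min_{\overline{B}_{\delta/2}}w$, and identification of the limit $t_0=1$ via $w\in\mathcal{M}_0$. The paper handles the uniformity-in-$y$ issue you flag at the end by arguing by contradiction---extracting a bad sequence $(\e_n,y_n)$ and showing $J_{\e_n}(\Phi_{\e_n}(y_n))\to d_0$ along it---which is equivalent to, and slightly cleaner than, your direct argument via compactness of $M$.
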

\begin{proof}
Assume by contradiction that there exist $\delta_{0}>0$, $(y_{n})\subset M$ and $\e_{n}\rightarrow 0$ such that 
\begin{equation}\label{puac}
|J_{\e_{n}}(\Phi_{\e_{n}}(y_{n}))-d_{0}|\geq \delta_{0}.
\end{equation}
Applying Lemma $4.1$ in \cite{AD} and the dominated convergence theorem, we see that 
\begin{align}\begin{split}\label{nio3}
&\| \Psi_{\e_{n}, y_{n}} \|^{2}_{\e_{n}}\rightarrow \|w\|^{2}_{0}\in (0, \infty). 
\end{split}\end{align}
On the other hand, since $\langle J'_{\e_{n}}(\Phi_{\e_{n}}(y_{n})),\Phi_{\e_{n}}(y_{n})\rangle=0$ and using the change of variable $z=\frac{\e_{n}x-y_{n}}{\e_{n}}$, it follows that
\begin{align*}
t_{\e_{n}}^{2}\|\Psi_{\e_{n}, y_{n}}\|_{\e_{n}}^{2}=\int_{\R^{N}} g(\e_{n}z+y_{n}, |t_{\e_{n}}\eta(|\e_{n}z|)w(z)|^{2}) |t_{\e_{n}}\eta(|\e_{n}z|)w(z)|^{2} dz.
\end{align*}
If $z\in B_{\frac{\delta}{\e_{n}}}$, then $\e_{n} z+y_{n}\in B_{\delta}(y_{n})\subset M_{\delta}\subset \Lambda$. Since $g(x,t)=f(t)$ for $(x, t)\in \Lambda\times [0, \infty)$, we have
\begin{align}\label{1nio}
t_{\e_{n}}^{2}\|\Psi_{\e_{n}, y_{n}}\|_{\e_{n}}^{2}=\int_{\R^{N}} f(|t_{\e_{n}}\eta(|\e_{n}z|)w(z)|^{2}) (t_{\e_{n}}\eta(|\e_{n}z|)w(z))^{2} \, dz.
\end{align}
In view of $\eta(|x|)=1$ for $x\in B_{\frac{\delta}{2}}$ and that $B_{\frac{\delta}{2}}\subset B_{\frac{\delta}{2\e_{n}}}$ for all $n$ large enough, it follows from \eqref{1nio} and $(f_4)$ that
\begin{align}\label{nioo}
\|\Psi_{\e_{n}, y_{n}}\|_{\e_{n}}^{2} &\geq \int_{B_{\frac{\delta}{2}}} \frac{f(|t_{\e_{n}}w(z)|^{2}) (t_{\e_{n}}w(z))^{2}}{t^{2}_{\e_{n}}}dz \nonumber \\
&\geq  f(|t_{\e_{n}}w(\hat{z})|^{2}) \int_{B_{\frac{\delta}{2}}} w^{2}(z)\,dz, 
\end{align}
where
\begin{equation*}
w(\hat{z})=\min_{z\in \overline{B}_{\frac{\delta}{2}}} w(z)>0.
\end{equation*} 
Now, assume by contradiction that $t_{\e_{n}}\rightarrow \infty$. 
This fact, \eqref{nio3} and \eqref{nioo} yield 
$$
\|w\|^{2}_{0}=\infty,
$$
that is a contradiction.
Hence, $(t_{\e_{n}})$ is bounded and, up to subsequence, we may assume that $t_{\e_{n}}\rightarrow t_{0}$ for some $t_{0}\geq 0$.  
In particular, $t_{0}>0$. In fact, if $t_{0}=0$, we see that \eqref{uNr} and \eqref{1nio} imply that
$$
r\leq \int_{\R^{3}} f(|t_{\e_{n}}\eta(|\e_{n}z|)w(z)|^{2}) (t_{\e_{n}}\eta(|\e_{n}z|)w(z))^{2}\, dz.
$$
Using $(f_1)$, $(f_2)$, \eqref{nio3} and the above inequality, we deduce that $t_{0}>0$.
Thus, letting $n\rightarrow \infty$ in \eqref{1nio}, we have that
\begin{align*}
\|w\|^{2}_{0}=\int_{\R^{N}} f((t_{0} w)^{2}) w^{2} \, dx.
\end{align*}
Bearing in mind that $w\in \M_{0}$ and using $(f_4)$, we infer that $t_{0}=1$.
Passing to the limit as $n\rightarrow \infty$ and using $t_{\e_{n}}\rightarrow 1$ we conclude that
$$
\lim_{n\rightarrow \infty} J_{\e_{n}}(\Phi_{\e_{n}, y_{n}})=I_{0}(w)=d_{0},
$$
which is in contrast with \eqref{puac}.
\end{proof}

\noindent
Let us fix $\rho=\rho(\delta)>0$ satisfying $M_{\delta}\subset B_{\rho}$, and we consider $\varUpsilon: \R^{N}\rightarrow \R^{N}$ given by
\begin{equation*}
\varUpsilon(x)=
\left\{
\begin{array}{ll}
x &\mbox{ if } |x|<\rho, \\
\frac{\rho x}{|x|} &\mbox{ if } |x|\geq \rho.
\end{array}
\right.
\end{equation*}
Then we define the barycenter map $\beta_{\e}: \N_{\e}\rightarrow \R^{N}$ as follows
\begin{align*}
\beta_{\e}(u)=\frac{\displaystyle{\int_{\R^{N}} \varUpsilon(\e x)|u(x)|^{2} \,dx}}{\displaystyle{\int_{\R^{N}} |u(x)|^{2} \,dx}}.
\end{align*}

\noindent
\begin{lem}\label{lem3.5N}
The following limit holds true:
\begin{equation*}
\lim_{\e \rightarrow 0} \beta_{\e}(\Phi_{\e}(y))=y \quad \mbox{ uniformly in } y\in M.
\end{equation*}
\end{lem}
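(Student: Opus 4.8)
The plan is to make the quotient defining $\beta_{\e}(\Phi_{\e}(y))$ completely explicit by a change of variables, after which the statement reduces to dominated convergence; the only point requiring some care is the uniformity in $y\in M$, and it is not a serious obstacle.

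First I would observe that the factor $t_{\e}$ is irrelevant: since the exponential in $\Psi_{\e,y}$ has modulus one and $w>0$ is real, $|\Phi_{\e}(y)(x)|^{2}=t_{\e}^{2}\,\eta(|\e x-y|)^{2}\,w\big(\tfrac{\e x-y}{\e}\big)^{2}$, so $t_{\e}^{2}$ cancels in the ratio and $\beta_{\e}(\Phi_{\e}(y))$ does not depend on $t_{\e}$. Performing the change of variables $z=\tfrac{\e x-y}{\e}$, so that $\e x=\e z+y$ and $|\e x-y|=\e|z|$, gives
$$
\beta_{\e}(\Phi_{\e}(y))=\frac{\displaystyle\int_{\R^{N}}\varUpsilon(\e z+y)\,\eta(\e|z|)^{2}w(z)^{2}\,dz}{\displaystyle\int_{\R^{N}}\eta(\e|z|)^{2}w(z)^{2}\,dz}.
$$
Since $y\in M\subset M_{\delta}\subset B_{\rho}$ one has $\varUpsilon(y)=y$, hence
$$
\beta_{\e}(\Phi_{\e}(y))-y=\frac{\displaystyle\int_{\R^{N}}\big[\varUpsilon(\e z+y)-\varUpsilon(y)\big]\,\eta(\e|z|)^{2}w(z)^{2}\,dz}{\displaystyle\int_{\R^{N}}\eta(\e|z|)^{2}w(z)^{2}\,dz}.
$$

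Next I would estimate the two integrals. The map $\varUpsilon$ is the metric projection of $\R^{N}$ onto the closed ball $\overline{B}_{\rho}$, hence $1$-Lipschitz (this can also be verified directly, distinguishing the cases $|\e z+y|\le\rho$ and $|\e z+y|>\rho$), so $|\varUpsilon(\e z+y)-\varUpsilon(y)|\le \e|z|$ and the numerator is bounded in modulus by $\e\int_{\R^{N}}|z|\,w(z)^{2}\,dz$. This integral is finite because $w$ is continuous (hence bounded near $0$) and, by the power decay \eqref{remdecay}, $w(z)\le C|z|^{-(N+2s)}$ for $|z|>1$, so that $|z|\,w(z)^{2}\le C|z|^{-(2N+4s-1)}\in L^{1}(\R^{N}\setminus B_{1})$. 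For the denominator, $\eta(\e|z|)^{2}\to\eta(0)^{2}=1$ pointwise as $\e\to0$, with $0\le\eta(\e|z|)^{2}w(z)^{2}\le w(z)^{2}\in L^{1}(\R^{N})$, so dominated convergence yields $\int_{\R^{N}}\eta(\e|z|)^{2}w(z)^{2}\,dz\to\|w\|_{L^{2}(\R^{N})}^{2}\in(0,\infty)$, uniformly in $y$ because this quantity is independent of $y$. Therefore, for all $\e$ small,
$$
\sup_{y\in M}\big|\beta_{\e}(\Phi_{\e}(y))-y\big|\le\frac{2\e}{\|w\|_{L^{2}(\R^{N})}^{2}}\int_{\R^{N}}|z|\,w(z)^{2}\,dz,
$$
and the right-hand side tends to $0$ as $\e\to0$, which proves the lemma.

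Alternatively, and in the spirit of the proof of Lemma \ref{lem3.4}, uniformity may be obtained by contradiction: if it failed there would be $\e_{n}\to0$, $y_{n}\in M$ and $\delta_{0}>0$ with $|\beta_{\e_{n}}(\Phi_{\e_{n}}(y_{n}))-y_{n}|\ge\delta_{0}$; using that $M$ is compact (in view of $(V_1)$-$(V_2)$, its closure being bounded and disjoint from $\partial\Lambda$) we may pass to a subsequence with $y_{n}\to y_{0}\in M$, and then in the integral representation above $\varUpsilon(\e_{n}z+y_{n})-y_{n}\to\varUpsilon(y_{0})-y_{0}=0$ pointwise in $z$, dominated by $2\rho\,w(z)^{2}\in L^{1}(\R^{N})$, so that $\beta_{\e_{n}}(\Phi_{\e_{n}}(y_{n}))-y_{n}\to0$, a contradiction. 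In either approach the only slightly delicate ingredient is the integrability of $|z|\,w(z)^{2}$ (respectively, the $L^{1}$-domination), which is exactly what the decay \eqref{remdecay} of the ground state provides; I do not expect any further difficulty.
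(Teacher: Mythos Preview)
Your proposal is correct. Your ``alternative'' argument by contradiction, with the change of variables $z=\tfrac{\e_{n}x-y_{n}}{\e_{n}}$ and dominated convergence using $|\varUpsilon(\e_{n}z+y_{n})-y_{n}|\le 2\rho$, is exactly the paper's proof; your first, more quantitative argument via the $1$-Lipschitz property of $\varUpsilon$ and the integrability of $|z|\,w(z)^{2}$ is a nice direct variant that yields an explicit $O(\e)$ rate, which the paper does not pursue.
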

\begin{proof}
Suppose by contradiction that there exist $\delta_{0}>0$, $(y_{n})\subset M$ and $\e_{n}\rightarrow 0$ such that
\begin{equation}\label{4.4}
|\beta_{\e_{n}}(\Phi_{\e_{n}}(y_{n}))-y_{n}|\geq \delta_{0}.
\end{equation}
Using the definitions of $\Phi_{\e_{n}}(y_{n})$, $\beta_{\e_{n}}$, $\psi$ and the change of variable $z= \frac{\e_{n} x-y_{n}}{\e_{n}}$, we can see that
$$
\beta_{\e_{n}}(\Phi_{\e_{n}}(y_{n}))=y_{n}+\frac{\int_{\R^{N}}[\varUpsilon(\e_{n}z+y_{n})-y_{n}] |\eta(|\e_{n}z|) w(z)|^{2}\, dz}{\int_{\R^{N}} |\eta(|\e_{n}z|) w(z)|^{2}\, dz}.
$$
Since $(y_{n})\subset M\subset B_{\rho}$, it follows from the dominated convergence theorem that
$$
|\beta_{\e_{n}}(\Phi_{\e_{n}}(y_{n}))-y_{n}|=o_{n}(1)
$$
which is in contrast with (\ref{4.4}).
\end{proof}

\noindent
The next compactness result will play a fundamental role to prove that the solutions of \eqref{MPe} are also solution to \eqref{R}.
\begin{lem}\label{prop3.3}
Let $\e_{n}\rightarrow 0$ and $(u_{n})=(u_{\e_{n}})\subset \mathcal{N}_{\e_{n}}$ be such that $J_{\e_{n}}(u_{n})\rightarrow d_{0}$. Then there exists $(\tilde{y}_{n})\subset \R^{N}$ such that $v_{n}(x)=|u_{n}|(x+\tilde{y}_{n})$ has a convergent subsequence in $\mathcal{H}_{0}$. Moreover, up to a subsequence, $y_{n}=\e_{n} \tilde{y}_{n}\rightarrow y_{0}$ for some $y_{0}\in M$.
\end{lem}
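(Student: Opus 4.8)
My plan is to run a concentration--compactness argument on $(|u_n|)$, using the fractional diamagnetic inequality (Lemma~\ref{DI}) to pass from the magnetic setting to $\mathcal{H}_0=H^s(\R^N,\R)$, and then to pin down the concentration point by comparing the energy with that of autonomous problems. First I would show $(u_n)$ is bounded in $\mathcal{H}_{\e_n}$ exactly as in Lemma~\ref{PSc}: since $u_n\in\mathcal{N}_{\e_n}$, the expression $J_{\e_n}(u_n)-\frac1\theta\langle J_{\e_n}'(u_n),u_n\rangle$ bounds $\bigl(\frac{\theta-2}{2\theta}\bigr)\bigl(1-\frac1k\bigr)\|u_n\|_{\e_n}^2$ via $(g_3)$, and the left side is bounded because $J_{\e_n}(u_n)\to d_0$; by Lemma~\ref{DI} and $(V_1)$, $(|u_n|)$ is then bounded in $\mathcal{H}_0$. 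Next I would prove non-vanishing: there are $R,\beta>0$ and $(\tilde y_n)\subset\R^N$ with $\liminf_n\int_{B_R(\tilde y_n)}|u_n|^2\,dx\geq\beta$. If not, Lemma~\ref{Lions} gives $|u_n|\to0$ in $L^r(\R^N,\R)$ for all $r\in(2,\2)$; splitting $\int_{\R^N}g_{\e_n}(x,|u_n|^2)|u_n|^2\,dx$ over $\Lambda_{\e_n}$ (using $(g_1)$--$(g_2)$ and $f(t)t\leq\delta t+C_\delta t^{q/2}$) and $\Lambda_{\e_n}^c$ (using $(g_3)$-$(ii)$, which gives $\leq\frac1k\|u_n\|_{\e_n}^2$), the identity $u_n\in\mathcal{N}_{\e_n}$ would force $(1-\frac1k)\|u_n\|_{\e_n}^2\to0$, against \eqref{uNr}. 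Setting $v_n:=|u_n|(\cdot+\tilde y_n)$, up to a subsequence $v_n\rightharpoonup v$ in $\mathcal{H}_0$, $v_n\to v$ in $L^r_{loc}$ and a.e., with $v\geq0$ and $v\neq0$.

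For the strong convergence in $\mathcal{H}_0$ I would let $t_n>0$ be the unique number with $t_n v_n\in\mathcal{M}_0$ (Lemma~\ref{lem2.3AMAW}-$(i)$). From Lemma~\ref{DI}, $V_{\e_n}\geq V_0$ and $(g_2)$ one has $\|v_n\|_0^2\leq\|u_n\|_{\e_n}^2$ and $\|u_n\|_{\e_n}^2=\int_{\R^N}g_{\e_n}(x,|u_n|^2)|u_n|^2\,dx\leq\int_{\R^N}f(v_n^2)v_n^2\,dx$; comparing with the identity $\|v_n\|_0^2=\int_{\R^N}f(t_n^2v_n^2)v_n^2\,dx$ and using $(f_4)$ gives $t_n\leq1$, while $t_nv_n\in\mathcal{M}_0$ together with the Nehari lower bound for $\mathcal{M}_0$ and the uniform bound on $\|v_n\|_0$ gives $t_n\geq\tau_0>0$. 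Since moreover $G_{\e_n}(x,\cdot)\leq F(\cdot)$ by $(g_2)$, the same two facts yield $J_{\e_n}(t_n u_n)\geq I_0(t_n v_n)$; as $u_n$ maximizes $t\mapsto J_{\e_n}(tu_n)$ (Lemma~\ref{lem2.3MAW}-$(i)$), we get $J_{\e_n}(u_n)\geq J_{\e_n}(t_n u_n)\geq I_0(t_n v_n)\geq d_0$. Letting $n\to\infty$ forces $I_0(t_n v_n)\to d_0$ with $(t_n v_n)\subset\mathcal{M}_0$, so Lemma~\ref{lem3.3MAW} gives a subsequence along which $t_n v_n\to\bar v$ in $\mathcal{H}_0$; extracting further so that $t_n\to t_*\in(0,1]$, we obtain $v_n\to v=\bar v/t_*$ in $\mathcal{H}_0$, with $t_* v$ a nonnegative ground state of \eqref{AP0}.

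It remains to locate $y_n:=\e_n\tilde y_n$. I would first show $(y_n)$ is bounded: if $|y_n|\to\infty$ along a subsequence, then since $\Lambda$ is bounded, for every $K$ one has $\Lambda_{\e_n}-\tilde y_n\subset\R^N\setminus B_K$ for $n$ large, so $\int_{\Lambda_{\e_n}}f(|u_n|^2)|u_n|^2\,dx=\int_{\Lambda_{\e_n}-\tilde y_n}f(v_n^2)v_n^2\,dx\leq\int_{\R^N\setminus B_K}f(v_n^2)v_n^2\,dx\to\int_{\R^N\setminus B_K}f(v^2)v^2\,dx$ (using $v_n\to v$ in $L^2\cap L^q$), which $\to0$ as $K\to\infty$; together with $(g_3)$-$(ii)$ and $u_n\in\mathcal{N}_{\e_n}$ this gives $\|u_n\|_{\e_n}\to0$, contradicting $\|u_n\|_{\e_n}^2\geq\|v_n\|_0^2\to\|v\|_0^2>0$. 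Hence $y_n\to y_0$ up to a subsequence, and the same escape argument on a ball $B_\rho(y_0)\subset\Lambda^c$ excludes $y_0\notin\overline\Lambda$; so $y_0\in\overline\Lambda$. To conclude $V(y_0)=V_0$, I argue by contradiction: if $V(y_0)>V_0$, pick $\tilde V\in(V_0,V(y_0))$ and $\rho>0$ with $V\geq\tilde V$ on $B_\rho(y_0)$; since $y_n\to y_0$ and $v_n\to v$ in $L^2$, restricting the integral $\int_{\R^N}V_{\e_n}(x)|u_n|^2\,dx=\int_{\R^N}V(\e_n x+y_n)v_n^2\,dx$ to $|x|\leq\rho/(2\e_n)$ gives $\liminf_n\int_{\R^N}V_{\e_n}(x)|u_n|^2\,dx\geq\tilde V\|v\|_{L^2(\R^N)}^2$, hence $\liminf_n\|u_n\|_{\e_n}^2\geq[v]^2+\tilde V\|v\|_{L^2(\R^N)}^2$. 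Using $G_{\e_n}(x,\cdot)\leq F(\cdot)$ and $v_n\to v$ strongly, for each fixed $t>0$ we then get $\liminf_n J_{\e_n}(tu_n)\geq\frac{t^2}{2}\bigl([v]^2+\tilde V\|v\|_{L^2(\R^N)}^2\bigr)-\frac12\int_{\R^N}F(t^2v^2)\,dx=I_{\tilde V}(tv)$, where $I_{\tilde V}$, $\mathcal{M}_{\tilde V}$, $d_{\tilde V}$ are the objects of Section~4 built with the constant potential $\tilde V$. Choosing $t=\sigma$ with $\sigma v\in\mathcal{M}_{\tilde V}$ and using $J_{\e_n}(u_n)\geq J_{\e_n}(\sigma u_n)$ we obtain $d_0=\lim_n J_{\e_n}(u_n)\geq\max_{t\geq0}I_{\tilde V}(tv)\geq d_{\tilde V}$; but $\tilde V>V_0$ forces $d_{\tilde V}>d_0$ by the standard strict monotonicity of $W\mapsto d_W$ (through the mountain--pass characterization as in Remark~\ref{rem3AMAW}), a contradiction. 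Therefore $V(y_0)=V_0$, and since $V>V_0$ on $\partial\Lambda$ by $(V_2)$ this forces $y_0\in\Lambda$, whence $y_0\in M$.

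The main obstacle I anticipate is this last step: one has to recover a sufficient share of the potential mass after translation (this is exactly where the truncation $|x|\leq\rho/(2\e_n)$ combined with the strong $L^2$-convergence of $v_n$ is used), and then marry it to the monotonicity of $t\mapsto J_{\e_n}(tu_n)$ along the Nehari manifold and the strict monotonicity of the autonomous ground state level in the potential. A secondary subtlety, throughout, is that since $f$ is merely continuous neither $\mathcal{N}_{\e_n}$ nor $\mathcal{M}_0$ is $C^1$, so the strong convergence in $\mathcal{H}_0$ has to be extracted via the generalized Nehari tools (Lemmas~\ref{lem2.3MAW}, \ref{lem2.3AMAW} and \ref{lem3.3MAW}) rather than by differentiating the constraint, and the diamagnetic inequality is essential for transferring all estimates from the magnetic functional to the scalar one.
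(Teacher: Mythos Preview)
Your proposal is correct and follows the paper's architecture almost exactly: boundedness via $J_{\e_n}-\tfrac{1}{\theta}\langle J'_{\e_n}(\cdot),\cdot\rangle$, Lions non-vanishing, projection $t_n v_n\in\mathcal{M}_0$, the squeeze $d_0\leq I_0(t_n v_n)\leq J_{\e_n}(t_n u_n)\leq J_{\e_n}(u_n)\to d_0$ via the diamagnetic inequality and $(g_2)$, and then Lemma~\ref{lem3.3MAW} for strong convergence; likewise the escape argument for boundedness of $y_n$ and for $y_0\in\overline\Lambda$ matches the paper. Your extra observation $t_n\leq 1$ is correct but not needed.

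The only genuine deviation is in the proof that $V(y_0)=V_0$. The paper proceeds in one stroke: since $\tilde v_n\to\tilde v$ strongly with $I_0(\tilde v)=d_0$, if $V(y_0)>V_0$ then by Fatou's lemma
\[
d_0=I_0(\tilde v)<\tfrac12[\tilde v]^2+\tfrac12\!\int V(y_0)\tilde v^2-\tfrac12\!\int F(\tilde v^2)
\leq \liminf_n J_{\e_n}(t_n u_n)\leq \liminf_n J_{\e_n}(u_n)\leq d_0,
\]
with no auxiliary problem needed. You instead introduce an intermediate constant potential $\tilde V\in(V_0,V(y_0))$, recover $\liminf_n\|u_n\|_{\e_n}^2\geq [v]^2+\tilde V\|v\|_{L^2}^2$ by localizing to $|x|\leq \rho/(2\e_n)$, deduce $d_0\geq d_{\tilde V}$, and conclude via strict monotonicity $W\mapsto d_W$. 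This is valid but more circuitous; the advantage of the paper's version is that it avoids introducing the family $\{I_W\}_W$ and the monotonicity lemma, using only the already established identity $I_0(\tilde v)=d_0$ and a single application of Fatou.
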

\begin{proof}
Taking into account $\langle J'_{\e_{n}}(u_{n}), u_{n}\rangle=0$, $J_{\e_{n}}(u_{n})\ri d_{0}$, it is easy to see that $(u_{n})$ is bounded in $\mathcal{H}_{\e_{n}}$. 
Then, there exists $C>0$ (independent of $n$) such that $\|u_{n}\|_{\e_{n}}\leq C$ for all $n\in \mathbb{N}$. Moreover, from Lemma \ref{DI}, we also know that $(|u_{n}|)$ is bounded in $H^{s}(\R^{N}, \R)$.\\
Now we prove that there exist a sequence $(\tilde{y}_{n})\subset \R^{N}$ and constants $R, \gamma>0$ such that
\begin{equation}\label{sacchi}
\liminf_{n\rightarrow \infty}\int_{B_{R}(\tilde{y}_{n})} |u_{n}|^{2} \, dx\geq \gamma>0.
\end{equation}
If by contradiction \eqref{sacchi} does not hold, then for all $R>0$ we get
$$
\lim_{n\rightarrow \infty}\sup_{y\in \R^{N}}\int_{B_{R}(y)} |u_{n}|^{2} \, dx=0.
$$
From the boundedness of $(|u_{n}|)$ in $H^{s}(\R^{N}, \R)$ and Lemma \ref{Lions}, we can see that $|u_{n}|\rightarrow 0$ in $L^{q}(\R^{N}, \R)$ for any $q\in (2, 2^{*}_{s})$. 
This fact together with \eqref{BERTSCH}
and the boundedness of $(|u_{n}|)$ in $L^{2}(\R^{N}, \R)$ yields that
\begin{align}\label{glimiti}
\lim_{n\rightarrow \infty}\int_{\R^{N}} g_{\e_{n}} (x, |u_{n}|^{2}) |u_{n}|^{2} \,dx=0= \lim_{n\rightarrow \infty}\int_{\R^{N}} G_{\e_{n}}(x, |u_{n}|^{2}) \, dx.
\end{align}
Taking into account $\langle J'_{\e_{n}}(u_{n}), u_{n}\rangle=0$ and \eqref{glimiti}, we can  infer that $\|u_{n}\|_{\e_{n}}\rightarrow 0$ as $n\rightarrow \infty$ and this implies that $J_{\e_{n}}(u_{n})\ri 0$ which is in contrast with $d_{0}>0$.
Now, we set $v_{n}(x)=|u_{n}|(x+\tilde{y}_{n})$. Then $(v_{n})$ is bounded in $H^{s}(\R^{N}, \R)$, and we may assume that 
$v_{n}\rightharpoonup v\not\equiv 0$ in $H^{s}(\R^{N}, \R)$  as $n\rightarrow \infty$.
Fix $t_{n}>0$ such that $\tilde{v}_{n}=t_{n} v_{n}\in \mathcal{M}_{0}$. By Lemma \ref{DI}, $(u_{n})\subset \mathcal{N}_{\e_{n}}$ and $J_{\e_{n}}(u_{n})\ri d_{0}$, we can see that 
$$
d_{0}\leq I_{0}(\tilde{v}_{n})\leq \max_{t\geq 0}J_{\e_{n}}(tu_{n})= J_{\e_{n}}(u_{n})=d_{0}+o_{n}(1)
$$
which implies that $I_{0}(\tilde{v}_{n})\rightarrow d_{0}$. In particular, $(\tilde{v}_{n})$ is bounded in $H^{s}(\R^{N}, \R)$ and $\tilde{v}_{n}\rightharpoonup \tilde{v}$ in $H^{s}(\R^{N}, \R)$.
Since $v_{n}\nrightarrow 0$  in $H^{s}(\R^{N}, \R)$ and $(\tilde{v}_{n})$ is bounded in $H^{s}(\R^{N}, \R)$, we deduce that $(t_{n})$ is bounded in $\R$ and, up to a subsequence, we can assume that $t_{n}\rightarrow t^{*}\geq 0$. Indeed $t^{*}>0$ due to the boundedness of $(v_{n})$ in $H^{s}(\R^{N}, \R)$,  and the fact that $\tilde{v}_{n}\nrightarrow 0$  in $H^{s}(\R^{N}, \R)$ (otherwise, by the continuity of $I_{0}$, $I_{0}(\tilde{v}_{n})\ri 0$ which is impossible because $d_{0}>0$). 
Then, from the uniqueness of the weak limit, we have that $\tilde{v}_{n}\rightharpoonup \tilde{v}=t^{*}v\not\equiv 0$ in $H^{s}(\R^{N}, \R)$. 
This fact combined with Lemma \ref{lem3.3MAW} yields
\begin{equation}\label{elena}
\tilde{v}_{n}\rightarrow \tilde{v} \quad \mbox{ in } H^{s}(\R^{N}, \R).
\end{equation} 
Consequently, $v_{n}\rightarrow v$ in $H^{s}(\R^{N}, \R)$ as $n\rightarrow \infty$. Moreover,
\begin{align}\label{elena15/0(}
I_{0}(\tilde{v})=0 \mbox{ and } \langle I'_{0}(\tilde{v}), \tilde{v}\rangle=0.
\end{align}
Now, we put $y_{n}=\e_{n}\tilde{y}_{n}$ and we claim that $(y_{n})$ admits a subsequence, still denoted by $(y_{n})$, such that $y_{n}\rightarrow y_{0}$ for some $y_{0}\in M$. First, we prove that $(y_{n})$ is bounded in $\R^{N}$. Assume by contradiction that, up to a subsequence, $|y_{n}|\rightarrow \infty$ as $n\rightarrow \infty$. Take $R>0$ such that $\Lambda \subset B_{R}$. Since we may suppose that  $|y_{n}|>2R$ for $n$ large, we have that for any $z\in B_{R/\e_{n}}$ 
$$
|\e_{n}z+y_{n}|\geq |y_{n}|-|\e_{n}z|>R.
$$
Hence, using $(u_{n})\subset \N_{\e_{n}}$, $(V_{1})$, Lemma \ref{DI} and the change of variable $x\mapsto z+\tilde{y}_{n}$, we obtain that 
\begin{align}\label{pasq}
[v_{n}]^{2}+\int_{\R^{N}} V_{0} v_{n}^{2}\, dx &\leq \int_{\R^{N}} g(\e_{n} x+y_{n}, |v_{n}|^{2}) |v_{n}|^{2} \, dx \nonumber\\
&\leq \int_{B_{\frac{R}{\e_{n}}}} \tilde{f}(|v_{n}|^{2}) |v_{n}|^{2} \, dx+\int_{\R^{N}\setminus B_{\frac{R}{\e_{n}}}} f(|v_{n}|^{2}) |v_{n}|^{2} \, dx.
\end{align}
Recalling that $v_{n}\rightarrow v$ in $H^{s}(\R^{N}, \R)$ as $n\rightarrow \infty$ and that $\tilde{f}(t)\leq \frac{V_{0}}{k}$, we can see that (\ref{pasq})  and the dominated convergence theorem yield
$$
\min\left\{1, V_{0}\left(1-\frac{1}{k}\right) \right\} \left([v_{n}]^{2}+\int_{\R^{N}} |v_{n}|^{2}\, dx\right)=o_{n}(1),
$$
that is $v_{n}\rightarrow 0$ in $H^{s}(\R^{N}, \R)$, which gives a contradiction. Therefore, $(y_{n})$ is bounded and we may assume that $y_{n}\rightarrow y_{0}\in \R^{N}$. If $y_{0}\notin \overline{\Lambda}$, then we can argue as before to infer that $v_{n}\rightarrow 0$ in $H^{s}(\R^{N}, \R)$, which is impossible. Hence $y_{0}\in \overline{\Lambda}$. Let us note that if $V(y_{0})=V_{0}$, then we can infer that $y_{0}\notin \partial \Lambda$ in view of $(V_2)$. Therefore, it is enough to verify that $V(y_{0})=V_{0}$ to deduce that $y_{0}\in M$. Suppose by contradiction that $V(y_{0})>V_{0}$.
Then, using (\ref{elena}), Fatou's lemma, the invariance of  $\R^{N}$ by translations and Lemma \ref{DI}, we get 
\begin{align*}
d_{0}=I_{0}(\tilde{v})&<\frac{1}{2}[\tilde{v}]^{2}+\frac{1}{2}\int_{\R^{N}} V(y_{0})\tilde{v}^{2} \, dx-\frac{1}{2}\int_{\R^{N}} F(|\tilde{v}|^{2})\, dx\\
&\leq \liminf_{n\rightarrow \infty}\left[\frac{1}{2}[\tilde{v}_{n}]^{2}+\frac{1}{2}\int_{\R^{N}} V(\e_{n}x+y_{n}) |\tilde{v}_{n}|^{2} \, dx-\frac{1}{2}\int_{\R^{N}} F(|\tilde{v}_{n}|^{2})\, dx  \right] \\
&\leq \liminf_{n\rightarrow \infty}\left[\frac{t_{n}^{2}}{2}[|u_{n}|]^{2}+\frac{t_{n}^{2}}{2}\int_{\R^{N}} V(\e_{n}z) |u_{n}|^{2} \, dz-\frac{1}{2}\int_{\R^{N}} F(|t_{n} u_{n}|^{2})\, dz  \right] \\
&\leq \liminf_{n\rightarrow \infty} J_{\e_{n}}(t_{n} u_{n}) \leq \liminf_{n\rightarrow \infty} J_{\e_{n}}(u_{n})\leq d_{0}
\end{align*}
which is impossible. This ends the proof of the lemma.
\end{proof}

\noindent
Now, we consider the following subset of $\N_{\e}$ 
$$
\widetilde{\N}_{\e}=\left \{u\in \N_{\e}: J_{\e}(u)\leq d_{0}+h(\e)\right\},
$$
where $h(\e)=\sup_{y\in M}|J_{\e}(\Phi_{\e}(y))-d_{0}|\rightarrow 0$ as $\e \rightarrow 0$ as a consequence of Lemma \ref{lem3.4}. By the definition of $h(\e)$, we know that, for all $y\in M$ and $\e>0$, $\Phi_{\e}(y)\in \widetilde{\N}_{\e}$ and $\widetilde{\N}_{\e}\neq \emptyset$. 
We present below an interesting relation between $\widetilde{\N}_{\e}$ and the barycenter map $\beta_{\e}$.
\begin{lem}\label{lem3.5}
For any $\delta>0$, there holds that
$$
\lim_{\e \rightarrow 0} \sup_{u\in \widetilde{\mathcal{N}}_{\e}} {\rm dist}(\beta_{\e}(u), M_{\delta})=0.
$$
\end{lem}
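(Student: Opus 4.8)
The plan is to argue by contradiction: suppose the conclusion fails, so that there exist $\delta_{0}>0$, $\e_{n}\rightarrow 0$ and $u_{n}\in\widetilde{\N}_{\e_{n}}$ with ${\rm dist}(\beta_{\e_{n}}(u_{n}),M_{\delta})\geq\delta_{0}$ for all $n$. The first and crucial step is to upgrade the one-sided bound encoded in $\widetilde{\N}_{\e_{n}}$ to the full convergence $J_{\e_{n}}(u_{n})\rightarrow d_{0}$. The estimate $\limsup_{n}J_{\e_{n}}(u_{n})\leq d_{0}$ is immediate since $u_{n}\in\widetilde{\N}_{\e_{n}}$ and $h(\e_{n})\rightarrow 0$. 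For the reverse inequality, note that $u_{n}\neq 0$ gives $|u_{n}|\in\mathcal{H}_{0}^{+}$, so Lemma \ref{lem2.3AMAW}-$(i)$ provides a unique $t_{n}>0$ with $t_{n}|u_{n}|\in\mathcal{M}_{0}$, whence $I_{0}(t_{n}|u_{n}|)\geq d_{0}$ by Remark \ref{rem3AMAW}. Comparing term by term, the diamagnetic inequality (Lemma \ref{DI}) yields $[|u_{n}|]^{2}\leq[u_{n}]^{2}_{A_{\e_{n}}}$, assumption $(V_1)$ yields $V_{\e_{n}}(x)\geq V_{0}$, and $(g_2)$ yields $G_{\e_{n}}(x,\cdot)\leq F(\cdot)$; hence $I_{0}(t_{n}|u_{n}|)\leq J_{\e_{n}}(t_{n}u_{n})$. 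Since $u_{n}\in\N_{\e_{n}}$, the map $t\mapsto J_{\e_{n}}(tu_{n})$ is maximized at $t=1$ (Lemma \ref{lem2.3MAW}-$(i)$), so $J_{\e_{n}}(t_{n}u_{n})\leq J_{\e_{n}}(u_{n})$ and therefore $d_{0}\leq I_{0}(t_{n}|u_{n}|)\leq J_{\e_{n}}(u_{n})$. This gives $J_{\e_{n}}(u_{n})\rightarrow d_{0}$.

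Next I would invoke the concentration result Lemma \ref{prop3.3} for the sequence $(u_{n})\subset\N_{\e_{n}}$ with $J_{\e_{n}}(u_{n})\rightarrow d_{0}$: up to a subsequence there are $(\tilde{y}_{n})\subset\R^{N}$ such that $v_{n}:=|u_{n}|(\cdot+\tilde{y}_{n})\rightarrow v$ in $H^{s}(\R^{N},\R)$ with $v\neq 0$, and $y_{n}:=\e_{n}\tilde{y}_{n}\rightarrow y_{0}$ for some $y_{0}\in M$. In particular $v_{n}^{2}\rightarrow v^{2}$ in $L^{1}(\R^{N})$ and $\int_{\R^{N}}v^{2}\,dx>0$.

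Then I would compute the barycenter explicitly. After the change of variable $x=z+\tilde{y}_{n}$ in the definition of $\beta_{\e_{n}}$ one gets
\[
\beta_{\e_{n}}(u_{n})=y_{n}+\frac{\displaystyle\int_{\R^{N}}\bigl(\varUpsilon(\e_{n}z+y_{n})-y_{n}\bigr)\,v_{n}^{2}(z)\,dz}{\displaystyle\int_{\R^{N}}v_{n}^{2}(z)\,dz}.
\]
Since $\e_{n}z+y_{n}\rightarrow y_{0}$ for a.e. $z$, the map $\varUpsilon$ is continuous and bounded with $\varUpsilon(y_{0})=y_{0}$ (because $y_{0}\in M\subset M_{\delta}\subset B_{\rho}$), and $v_{n}^{2}\rightarrow v^{2}$ in $L^{1}(\R^{N})$ with $\int v^{2}>0$, the dominated convergence theorem (as in Lemma \ref{lem3.5N}) yields $\beta_{\e_{n}}(u_{n})\rightarrow y_{0}$. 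Hence ${\rm dist}(\beta_{\e_{n}}(u_{n}),M_{\delta})\rightarrow{\rm dist}(y_{0},M_{\delta})=0$, since $y_{0}\in M\subseteq M_{\delta}$, which contradicts ${\rm dist}(\beta_{\e_{n}}(u_{n}),M_{\delta})\geq\delta_{0}>0$.

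I expect the only genuinely non-routine point to be the first step, namely recovering $J_{\e_{n}}(u_{n})\rightarrow d_{0}$ so that Lemma \ref{prop3.3} becomes applicable: this is precisely where the penalization structure ($G\leq F$ on all of $\R^{N}$), the diamagnetic inequality, and $V\geq V_{0}$ must be combined to control $J_{\e_{n}}$ from below by the autonomous minimax level $d_{0}$. After that, the proof is a change of variables plus a standard dominated-convergence passage to the limit, so no further difficulty is anticipated.
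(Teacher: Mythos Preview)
Your proposal is correct and follows essentially the same route as the paper: you recover $J_{\e_{n}}(u_{n})\rightarrow d_{0}$ via the comparison $I_{0}(t|u_{n}|)\leq J_{\e_{n}}(tu_{n})$ (diamagnetic inequality, $(V_1)$, $(g_2)$) combined with $u_{n}\in\N_{\e_{n}}$, then apply Lemma~\ref{prop3.3} and pass to the limit in the barycenter after the change of variable $x=z+\tilde{y}_{n}$. The only cosmetic difference is that the paper takes a generic sequence $\e_{n}\rightarrow 0$ and near-maximizers of $u\mapsto{\rm dist}(\beta_{\e_{n}}(u),M_{\delta})$ rather than framing it as a contradiction, but the argument is the same.
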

\begin{proof}
Let $(\e_{n})\subset (0, \infty)$ such that $\e_{n}\ri 0$. Then there exists $(u_{n})\subset \widetilde{\mathcal{N}}_{\e_{n}}$ such that 
$$
\sup_{u\in \widetilde{\N}_{\e_{n}}} \inf_{y\in M_{\delta}}|\beta_{\e_{n}}(u)-y|=\inf_{y\in M_{\delta}}|\beta_{\e_{n}}(u_{n})-y|+o_{n}(1).
$$
Hence, it is enough to find a sequence $(y_{n})\subset M_{\delta}$ such that
\begin{equation}\label{3.13}
\lim_{n\rightarrow \infty} |\beta_{\e_{n}}(u_{n})-y_{n}|=0.
\end{equation}
By Lemma \ref{DI} we know that $I_{0}(t |u_{n}|)\leq J(tu_{n})$ for all $t\geq 0$. Therefore, recalling that $(u_{n})\subset  \widetilde{\N}_{\e_{n}}\subset  \N_{\e_{n}}$, we deduce that
$$
d_{0}\leq \max_{t\geq 0} I_{0}(t|u_{n}|)\leq \max_{t\geq 0} J_{\e_{n}}(t u_{n})=J_{\e_{n}}(u_{n})\leq d_{0}+h(\e_{n})
$$
which leads to $J_{\e_{n}}(u_{n})\rightarrow d_{0}$. By invoking Lemma \ref{prop3.3}, we can find $(\tilde{y}_{n})\subset \R^{N}$ such that $y_{n}=\e_{n}\tilde{y}_{n}\in M_{\delta}$ for $n$ sufficiently large. Consequently,
$$
\beta_{\e_{n}}(u_{n})=y_{n}+\frac{\displaystyle{\int_{\R^{N}}[\varUpsilon(\e_{n}z+y_{n})-y_{n}] |u_{n}(z+\tilde{y}_{n})|^{2} \, dz}}{\displaystyle{\int_{\R^{N}} |u_{n}(z+\tilde{y}_{n})|^{2} \, dz}}.
$$
Taking into account that $|u_{n}|(\cdot+\tilde{y}_{n})$ strongly converges in $\mathcal{H}_{0}$ and that $\e_{n}z+y_{n}\rightarrow y_{0}\in M_{\delta}$, we find $\beta_{\e_{n}}(u_{n})=y_{n}+o_{n}(1)$, that is (\ref{3.13}) is satisfied.
\end{proof}

\noindent
We end this section by proving a multiplicity result for \eqref{MPe}. Since $\mathbb{S}^{+}_{\e}$ is not a completed metric space, we cannot use directly an abstract result as in \cite{Ana, Adcds, AD}. Instead, we invoke the abstract category result in \cite{SW}.
\begin{thm}\label{multiple}
For any $\delta>0$ such that $M_{\delta}\subset \Lambda$, there exists $\tilde{\e}_{\delta}>0$ such that, for any $\e\in (0, \tilde{\e}_{\delta})$, problem \eqref{MPe} has at least $cat_{M_{\delta}}(M)$ nontrivial solutions.
\end{thm}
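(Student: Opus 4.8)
The plan is to run the Benci--Cerami comparison scheme within the Ljusternik--Schnirelman framework for functionals defined on the incomplete $C^{1,1}$-manifold $\mathbb{S}_{\e}^{+}$, using the abstract machinery of Szulkin--Weth \cite{SW} already exploited in Proposition~\ref{prop2.1MAW} and Corollary~\ref{cor2.1MAW}. Fix $\delta>0$ with $M_{\delta}\subset\Lambda$, put $d:=d_{0}+h(\e)$ and write $\psi_{\e}^{d}:=\{v\in\mathbb{S}_{\e}^{+}:\psi_{\e}(v)\le d\}$. By Proposition~\ref{prop2.1MAW}-$(d)$, the critical points of $\psi_{\e}$ on $\mathbb{S}_{\e}^{+}$ correspond, through the homeomorphism $m_{\e}$, to nontrivial critical points of $J_{\e}$ on $\mathcal{N}_{\e}$, i.e. to nontrivial solutions of \eqref{MPe}; moreover $\psi_{\e}(m_{\e}^{-1}(u))=J_{\e}(u)$, so $m_{\e}$ carries $\psi_{\e}^{d}$ homeomorphically onto $\widetilde{\mathcal{N}}_{\e}$, and $\psi_{\e}^{d}\ne\emptyset$ because $m_{\e}^{-1}(\Phi_{\e}(y))\in\psi_{\e}^{d}$ for every $y\in M$ by Lemma~\ref{lem3.4}. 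It therefore suffices to show that $\psi_{\e}$ has at least $cat_{M_{\delta}}(M)$ critical points in $\psi_{\e}^{d}$ once $\e$ is small.

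First I would establish the category estimate $cat_{\widetilde{\mathcal{N}}_{\e}}(\widetilde{\mathcal{N}}_{\e})\ge cat_{M_{\delta}}(M)$ for $\e$ small. Consider the maps $M\xrightarrow{\Phi_{\e}}\widetilde{\mathcal{N}}_{\e}\xrightarrow{\beta_{\e}}M_{\delta}$: the first is well defined by Lemma~\ref{lem3.4} (giving $\Phi_{\e}(y)\in\widetilde{\mathcal{N}}_{\e}$), the second by Lemma~\ref{lem3.5} (giving $\beta_{\e}(\widetilde{\mathcal{N}}_{\e})\subset M_{\delta}$ once $\e$ is small). By Lemma~\ref{lem3.5N}, $|\beta_{\e}(\Phi_{\e}(y))-y|\to 0$ uniformly in $y\in M$, so for small $\e$ the affine homotopy $H(t,y)=(1-t)y+t\,\beta_{\e}(\Phi_{\e}(y))$ stays in $M_{\delta}$ — since ${\rm dist}(\cdot,M)$ is $1$-Lipschitz and $y\in M$, one has ${\rm dist}(H(t,y),M)\le|\beta_{\e}(\Phi_{\e}(y))-y|<\delta$ — and it connects $\beta_{\e}\circ\Phi_{\e}$ to the inclusion $M\hookrightarrow M_{\delta}$. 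The Benci--Cerami argument \cite{BC} then yields $cat_{\widetilde{\mathcal{N}}_{\e}}(\widetilde{\mathcal{N}}_{\e})\ge cat_{M_{\delta}}(M)$, hence $cat_{\psi_{\e}^{d}}(\psi_{\e}^{d})\ge cat_{M_{\delta}}(M)$ through the homeomorphism $m_{\e}$.

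Next I would invoke the abstract Ljusternik--Schnirelman result of \cite{SW} for $\psi_{\e}$ on $\mathbb{S}_{\e}^{+}$. Its hypotheses hold: $\psi_{\e}\in C^{1}(\mathbb{S}_{\e}^{+},\R)$ (Proposition~\ref{prop2.1MAW}-$(b)$), $\psi_{\e}$ is bounded below by $c_{\e}>0$ (Remark~\ref{rem3MAW}), $\psi_{\e}$ satisfies $(PS)_{c}$ at every level $c\in\R$ (Corollary~\ref{cor2.1MAW}), and, crucially, Lemma~\ref{lem2.3MAW}-$(iv)$ forces $\psi_{\e}(v_{n})\to\infty$ whenever ${\rm dist}(v_{n},\partial\mathbb{S}_{\e}^{+})\to 0$, so every sublevel set $\psi_{\e}^{d}$ stays uniformly away from $\partial\mathbb{S}_{\e}^{+}$ and the deformation arguments remain valid on this incomplete manifold. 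Therefore $\psi_{\e}$ admits at least $cat_{\psi_{\e}^{d}}(\psi_{\e}^{d})\ge cat_{M_{\delta}}(M)$ critical points in $\psi_{\e}^{d}$, which by Proposition~\ref{prop2.1MAW}-$(d)$ give at least $cat_{M_{\delta}}(M)$ nontrivial solutions of \eqref{MPe}; the threshold $\tilde{\e}_{\delta}$ is taken as the smallest of the ones furnished by Lemmas~\ref{lem3.4}, \ref{lem3.5N} and \ref{lem3.5} (and, consistently, $c_{\e}\le d_{0}+h(\e)\to d_{0}$ by Lemma~\ref{AMlem1}).

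The main obstacle is exactly the non-differentiability of $\mathcal{N}_{\e}$ — forced by the mere continuity of $f$ — together with the incompleteness of $\mathbb{S}_{\e}^{+}$: the standard Ljusternik--Schnirelman scheme on the Nehari manifold is unavailable, and on $\mathbb{S}_{\e}^{+}$ the deformation lemma could fail if a descending flow reached $\partial\mathbb{S}_{\e}^{+}$. This is resolved by Lemma~\ref{lem2.3MAW}-$(iv)$, which makes $\psi_{\e}$ blow up near the boundary and hence confines every relevant sublevel set (those at energy $\le d_{0}+h(\e)$) to a region compactly contained in $\mathbb{S}_{\e}^{+}$. A secondary, easily settled point is verifying that the segment $H(t,y)$ stays inside $M_{\delta}$, which is immediate from the $1$-Lipschitz continuity of ${\rm dist}(\cdot,M)$ and the uniform smallness supplied by Lemma~\ref{lem3.5N}.
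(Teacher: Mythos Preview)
Your proposal is correct and follows essentially the same approach as the paper: you build the Benci--Cerami diagram $M\to\widetilde{\mathcal{N}}_{\e}\to M_{\delta}$ (equivalently, via the homeomorphism $m_{\e}$, the diagram $M\to\psi_{\e}^{d}\to M_{\delta}$), verify by the affine homotopy that $\beta_{\e}\circ\Phi_{\e}$ is homotopic in $M_{\delta}$ to the inclusion, deduce the category lower bound, and then apply the abstract Ljusternik--Schnirelman result of \cite{SW} to $\psi_{\e}$ on the incomplete manifold $\mathbb{S}_{\e}^{+}$, with Lemma~\ref{lem2.3MAW}-$(iv)$ ensuring sublevel sets stay away from $\partial\mathbb{S}_{\e}^{+}$. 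The only cosmetic difference is that the paper phrases the category estimate on the image $\alpha_{\e}(M)=m_{\e}^{-1}(\Phi_{\e}(M))$ and then invokes Theorem~27 of \cite{SW} with $K=\alpha_{\e}(M)$, whereas you state it on the full sublevel set $\psi_{\e}^{d}\cong\widetilde{\mathcal{N}}_{\e}$; both formulations are equivalent through $m_{\e}$.
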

\begin{proof}
For any $\e>0$, we consider the map $\alpha_{\e} : M \rightarrow \mathbb{S}_{\e}^{+}$ defined as $\alpha_{\e}(y)= m_{\e}^{-1}(\Phi_{\e}(y))$. \\
Using Lemma \ref{lem3.4}, we see that
\begin{equation}\label{FJSMAW}
\lim_{\e \rightarrow 0} \psi_{\e}(\alpha_{\e}(y)) = \lim_{\e \rightarrow 0} J_{\e}(\Phi_{\e}(y))= d_{0} \quad \mbox{ uniformly in } y\in M. 
\end{equation}  
Set
$$
\widetilde{\mathcal{S}}^{+}_{\e}=\{ w\in \mathbb{S}_{\e}^{+} : \psi_{\e}(w) \leq d_{0} + h(\e)\}, 
$$
where $h(\e)=\sup_{y\in \Lambda}|\psi_{\e}(\alpha_{\e}(y))-d_{0}|$.
It follows from \eqref{FJSMAW} that $h(\e)\rightarrow 0$ as $\e\rightarrow 0$. Moreover, $\alpha_{\e}(y)\in \widetilde{\mathcal{S}}^{+}_{\e}$ for all $y\in M$ and this shows that $\widetilde{\mathcal{S}}^{+}_{\e}\neq \emptyset$ for all $\e>0$. \\
In light of Lemma \ref{lem3.4}, Lemma \ref{lem2.3MAW}-$(iii)$, Lemma \ref{lem3.5N} and Lemma \ref{lem3.5}, we can find $\bar{\e}= \bar{\e}_{\delta}>0$ such that the following diagram
\begin{equation*}
M\stackrel{\Phi_{\e}}{\rightarrow} \Phi_{\e}(M) \stackrel{m_{\e}^{-1}}{\rightarrow} \alpha_{\e}(M)\stackrel{m_{\e}}{\rightarrow} \Phi_{\e}(M) \stackrel{\beta_{\e}}{\rightarrow} M_{\delta}
\end{equation*}    
is well defined for any $\e \in (0, \bar{\e})$. 
Thanks to Lemma \ref{lem3.5N}, and decreasing $\bar{\e}$ if necessary, we see that $\beta_{\e}(\Phi_{\e}(y))= y+ \theta(\e, y)$ for all $y\in M$, for some function $\theta(\e, y)$ satisfying $|\theta(\e, y)|<\frac{\delta}{2}$ uniformly in $y\in M$ and for all $\e \in (0, \bar{\e})$. Define $H(t, y)= y+ (1-t)\theta(\e, y)$. Then $H: [0,1]\times M\rightarrow M_{\delta}$ is continuous. Clearly,  $H(0, y)=\beta_{\e}(\Phi_{\e}(y))$ and $H(1,y)=y$ for all $y\in M$. Consequently, $H(t, y)$ is a homotopy between $\beta_{\e} \circ \Phi_{\e} = (\beta_{\e} \circ m_{\e}) \circ (m_{\e}^{-1}\circ \Phi_{\e})$ and the inclusion map $id: M \rightarrow M_{\delta}$. This fact yields 
\begin{equation}\label{catMAW}
cat_{\alpha_{\e}(M)} \alpha_{\e}(M)\geq cat_{M_{\delta}}(M).
\end{equation}
Applying Corollary \ref{cor2.1MAW}, Lemma \ref{AMlem1}, and Theorem $27$ in \cite{SW} with $c= c_{\e}\leq d_{0}+h(\e) =d$ and $K= \alpha_{\e}(M)$, we obtain that $\psi_{\e}$ has at least $cat_{\alpha_{\e}(M)} \alpha_{\e}(M)$ critical points on $\widetilde{\mathcal{S}}^{+}_{\e}$.
Taking into account Proposition \ref{prop2.1MAW}-$(d)$ and \eqref{catMAW}, we infer that $J_{\e}$ admits at least $cat_{M_{\delta}}(M)$ critical points in $\widetilde{\mathcal{N}}_{\e}$.    
\end{proof}

\section{Proof of Theorem \ref{thm1}}
This last section is devoted to the proof of the main result of this paper. In order to show that the solutions of \eqref{MPe} are indeed solutions to \eqref{R} for $\e>0$ small, we need to verify that $|u_{\e}|\leq \sqrt{a}$ in $\Lambda^{c}_{\e}$ holds true provided that $\e>0$ is sufficiently small.
We start by proving a fractional Kato's inequality in the spirit of \cite{Kato} for the solutions of fractional magnetic problems.
\begin{thm}\label{KATOTHMNG}
Let $u\in \mathcal{D}^{s, 2}_{A}(\R^{N}, \C)$ and $f\in L^{1}_{loc}(\R^{N}, \C)$ be such that
\begin{align}\label{wffNG}
&\Re\left(\iint_{\R^{2N}} \frac{(u(x)-u(y)e^{\imath A(\frac{x+y}{2})\cdot (x-y)})}{|x-y|^{N+2s}} \overline{(\psi(x)-\psi(y)e^{\imath A(\frac{x+y}{2})\cdot (x-y)})}  \,dxdy \right) \nonumber\\
&\quad=\Re\left(\int_{\R^{N}} f\bar{\psi} \,dx \right)
\end{align}
for all $\psi:\R^{N}\rightarrow \C$ measurable with compact support and such that $[\psi]_{A}<\infty$.

Then it holds
$(-\Delta)^{s}|u|\leq \Re({\rm sign}(\bar{u})f)$ in the distributional sense, that is
\begin{align}\label{Kato0NG}
\iint_{\R^{2N}} \frac{(|u(x)|-|u(y)|)(\varphi(x)-\varphi(y))}{|x-y|^{N+2s}} \,dx dy\leq \Re\left(\int_{\R^{N}} {\rm sign}(\bar{u})f \varphi \,dx \right)
\end{align}
for all $\varphi\in C^{\infty}_{c}(\R^{N}, \R)$ such that $\varphi\geq 0$, where 
\begin{equation*}
{\rm sign}(\bar{u})(x)=
\left\{ 
\begin{array}{cc}
\frac{\overline{u(x)}}{|u(x)|} &\mbox{ if } u(x)\neq 0, \\
0 &\mbox{ if } u(x)=0.
\end{array}
\right.
\end{equation*}
\end{thm}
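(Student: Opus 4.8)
The plan is to substitute into the weak identity \eqref{wffNG} a regularization of the test function $\mathrm{sign}(\bar u)\varphi$ and then let the regularization parameter tend to $0$. Fix $\varphi\in C^\infty_c(\R^N,\R)$ with $\varphi\ge 0$. For $\delta>0$ put $u_\delta=\sqrt{|u|^2+\delta^2}$, $\Phi_\delta(z)=z/\sqrt{|z|^2+\delta^2}$ for $z\in\C$, and write $e_{x,y}=e^{\imath A(\frac{x+y}{2})\cdot(x-y)}$. Then $|\Phi_\delta|\le 1$, $\Phi_\delta$ is globally Lipschitz with constant $\le 2/\delta$, and it is equivariant, $\Phi_\delta(e_{x,y}z)=e_{x,y}\Phi_\delta(z)$. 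I would use $\psi_\delta:=\Phi_\delta(u)\,\varphi$ as test function. It has compact support contained in $\supp\varphi$; moreover, from
\[
\psi_\delta(x)-\psi_\delta(y)e_{x,y}=\varphi(x)\big(\Phi_\delta(u(x))-\Phi_\delta(u(y)e_{x,y})\big)+\big(\varphi(x)-\varphi(y)\big)\,\Phi_\delta(u(y))\,e_{x,y}
\]
(which uses the equivariance of $\Phi_\delta$), the Lipschitz bound together with $|\Phi_\delta|\le1$ gives $[\psi_\delta]^2_A\le C_\delta\big([u]^2_A+[\varphi]^2_s\big)<\infty$, so $\psi_\delta$ is admissible in \eqref{wffNG}.

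With $v=|u|$ and $g_\delta(t)=t/\sqrt{t^2+\delta^2}$ for $t\ge0$, I would then estimate the two sides of \eqref{wffNG} for $\psi=\psi_\delta$. On the right-hand side, $\overline{\psi_\delta}=(\bar u/u_\delta)\varphi$, $|\bar u/u_\delta|\le1$, $\bar u/u_\delta\to\mathrm{sign}(\bar u)$ pointwise and $f\varphi\in L^1(\R^N)$, so dominated convergence yields $\Re\int_{\R^N}f\overline{\psi_\delta}\,dx\to\Re\int_{\R^N}\mathrm{sign}(\bar u)f\varphi\,dx$. On the left-hand side, expanding $\Re\big[(u(x)-u(y)e_{x,y})\overline{(\psi_\delta(x)-\psi_\delta(y)e_{x,y})}\big]$ and using $|u(y)e_{x,y}|=|u(y)|$, $|\Re(z\bar w)|\le|z|\,|w|$ and $\varphi\ge0$, one obtains the pointwise lower bound
\[
\Re\big[(u(x)-u(y)e_{x,y})\overline{(\psi_\delta(x)-\psi_\delta(y)e_{x,y})}\big]\ \ge\ (v(x)-v(y))\big(g_\delta(v(x))\varphi(x)-g_\delta(v(y))\varphi(y)\big).
\]
Since the left member divided by $|x-y|^{N+2s}$ is integrable on $\R^{2N}$ (Cauchy--Schwarz, $[u]_A[\psi_\delta]_A<\infty$), dividing by the kernel, integrating and invoking \eqref{wffNG} gives, for every $\delta>0$,
\[
\Re\int_{\R^N}f\,\tfrac{\bar u}{u_\delta}\,\varphi\,dx\ \ge\ \iint_{\R^{2N}}\frac{(v(x)-v(y))\big(g_\delta(v(x))\varphi(x)-g_\delta(v(y))\varphi(y)\big)}{|x-y|^{N+2s}}\,dx\,dy .
\]

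The main obstacle is passing to the limit in the last integral: the tempting pointwise comparison with $(v(x)-v(y))(\varphi(x)-\varphi(y))$ is false, so the nonlocal kernel must be exploited through a symmetrization. Using the elementary identity $\frac{p+q}{2}(c-d)+\frac{c+d}{2}(p-q)=pc-qd$, I would write
\[
g_\delta(v(x))\varphi(x)-g_\delta(v(y))\varphi(y)=\tfrac{g_\delta(v(x))+g_\delta(v(y))}{2}\,(\varphi(x)-\varphi(y))+\tfrac{\varphi(x)+\varphi(y)}{2}\,\big(g_\delta(v(x))-g_\delta(v(y))\big).
\]
After multiplication by $v(x)-v(y)$, the first term is dominated by $|v(x)-v(y)|\,|\varphi(x)-\varphi(y)|$, which is integrable against $|x-y|^{-N-2s}$ since $v=|u|\in\mathcal{D}^{s,2}(\R^N,\R)$ by Lemma \ref{DI} and $\varphi\in H^s(\R^N,\R)$; hence its integral converges, by dominated convergence, to the same integral with $g_\delta(v)$ replaced by $\chi:=\chi_{\{u\neq0\}}$ (because $g_\delta(t)\to\mathbf{1}_{\{t>0\}}$ for each $t\ge0$). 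The second term is pointwise nonnegative (monotonicity of $g_\delta$ on $[0,\infty)$ and $\varphi\ge0$), so Fatou's lemma bounds its $\liminf$ below by the corresponding integral with $\chi$ in place of $g_\delta(v)$. Recombining by the same identity, these two facts give
\[
\Re\int_{\R^N}\mathrm{sign}(\bar u)\,f\,\varphi\,dx\ \ge\ \iint_{\R^{2N}}\frac{(v(x)-v(y))\big(\chi(x)\varphi(x)-\chi(y)\varphi(y)\big)}{|x-y|^{N+2s}}\,dx\,dy ,
\]
and this integral is finite. Finally, since $v$ vanishes on $\{u=0\}$, a short case distinction according to whether $u(x)$ and/or $u(y)$ vanish shows that $(v(x)-v(y))\big(\chi(x)\varphi(x)-\chi(y)\varphi(y)\big)-(v(x)-v(y))\big(\varphi(x)-\varphi(y)\big)$ equals $v(y)\varphi(x)$, $v(x)\varphi(y)$ or $0$, hence is $\ge0$ pointwise; therefore the right-hand side above dominates $\iint_{\R^{2N}}\frac{(v(x)-v(y))(\varphi(x)-\varphi(y))}{|x-y|^{N+2s}}\,dx\,dy$, which is exactly \eqref{Kato0NG}. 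The subtlety permeating the argument is that $\{u=0\}$ need not be Lebesgue-null, which is precisely why the symmetrized splitting — and not a direct pointwise inequality — is required to carry out the limit.
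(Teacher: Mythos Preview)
Your argument is correct and follows the same strategy as the paper: you use the identical test function $\psi_\delta=\dfrac{u}{u_\delta}\varphi$, verify its admissibility in the same way, obtain the same pointwise lower bound
\[
\Re\bigl[(u(x)-u(y)e_{x,y})\overline{(\psi_\delta(x)-\psi_\delta(y)e_{x,y})}\bigr]\ \ge\ (v(x)-v(y))\bigl(g_\delta(v(x))\varphi(x)-g_\delta(v(y))\varphi(y)\bigr),
\]
and pass to the limit on the right-hand side by dominated convergence.

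The one place where you diverge is the passage to the limit on the left. The paper rewrites the lower bound asymmetrically as
\[
g_\delta(v(x))\,(v(x)-v(y))(\varphi(x)-\varphi(y))\;+\;\bigl(g_\delta(v(x))-g_\delta(v(y))\bigr)(v(x)-v(y))\varphi(y),
\]
drops the second (nonnegative) term, and applies dominated convergence to the first, asserting $g_\delta(v(x))\to 1$ a.e. You instead use the symmetric splitting $pc-qd=\tfrac{p+q}{2}(c-d)+\tfrac{c+d}{2}(p-q)$, apply dominated convergence to the first summand and Fatou's lemma to the second, and then close with a pointwise comparison between $(v(x)-v(y))(\chi\varphi(x)-\chi\varphi(y))$ and $(v(x)-v(y))(\varphi(x)-\varphi(y))$. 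What your route buys is robustness on the set $\{u=0\}$: the paper's claim $g_\delta(v)\to 1$ a.e.\ is literally true only where $u\neq 0$ (the limit is $\chi_{\{u\neq0\}}$), and your symmetric splitting together with the final case check makes the argument go through without any assumption on $|\{u=0\}|$. In fact, had the paper \emph{retained} its second nonnegative term and applied Fatou to it, the two decompositions would recombine into the very same intermediate bound $\iint (v(x)-v(y))(\chi(x)\varphi(x)-\chi(y)\varphi(y))|x-y|^{-N-2s}\,dxdy$ that you obtain, so the two routes are minor variants of the same proof.
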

\begin{proof}
Take $\varphi\in C^{\infty}_{c}(\R^{N}, \R)$ such that $\varphi\geq 0$, and for all $\delta>0$ we consider 
$$
\omega_{\delta}(x)=\frac{u(x)}{\sqrt{|u(x)|^{2}+\delta^{2}}} \varphi(x)= \frac{u(x)}{u_{\delta}(x)}\varphi(x)
$$ 
as test function in \eqref{wffNG}. First, we show that $\omega_{\delta}$ is admissible.
It is clear that $\omega_{\delta}$ has compact support.    
On the other hand, we can observe
\begin{align*}
\omega_{\delta}(x)-\omega_{\delta}(y)e^{\imath A(\frac{x+y}{2})\cdot (x-y)}&=\left(\frac{u(x)}{u_{\delta}(x)}\right)\varphi(x)-\left(\frac{u(y)}{u_{\delta}(y)}\right)\varphi(y)e^{\imath A(\frac{x+y}{2})\cdot (x-y)}\\
&=\left[u(x)-u(y)e^{\imath A(\frac{x+y}{2})\cdot (x-y)}\right] \frac{\varphi(x)}{u_{\delta}(x)}\\
&+\left[\frac{\varphi(x)}{u_{\delta}(x)}-\frac{\varphi(y)}{u_{\delta}(y)} \right] u(y)e^{\imath A(\frac{x+y}{2})\cdot (x-y)} \\
&=\left[u(x)-u(y)e^{\imath A(\frac{x+y}{2})\cdot (x-y)}\right] \frac{\varphi(x)}{u_{\delta}(x)}\\
&+\left[\frac{1}{u_{\delta}(x)}-\frac{1}{u_{\delta}(y)} \right] \varphi(x)u(y)e^{\imath A(\frac{x+y}{2})\cdot (x-y)}\\
&+[\varphi(x)-\varphi(y)] \frac{u(y)}{u_{\delta}(y)} e^{\imath A(\frac{x+y}{2})\cdot (x-y)}
\end{align*}
which implies that
\begin{align*}
&|\omega_{\delta}(x)-\omega_{\delta}(y)e^{\imath A(\frac{x+y}{2})\cdot (x-y)}|^{2} \\
&\leq \frac{4}{\delta^{2}}|u(x)-u(y)e^{\imath A(\frac{x+y}{2})\cdot (x-y)}|^{2}\|\varphi\|^{2}_{L^{\infty}(\R^{N})}\\
&+4\left|\frac{u(y)}{u_{\delta}(y)}\right|^{2} \frac{1}{|u_{\delta}(x)|^{2}}  \|\varphi\|^{2}_{L^{\infty}(\R^{N})} |u_{\delta}(y)-u_{\delta}(x)|^{2} +4|\varphi(x)-\varphi(y)|^{2} \\
&\leq \frac{4}{\delta^{2}}|u(x)-u(y)e^{\imath A(\frac{x+y}{2})\cdot (x-y)}|^{2}\|\varphi\|^{2}_{L^{\infty}(\R^{N})}+\frac{4}{\delta^{2}}||u(x)|-|u(y)||^{2}\|\varphi\|^{2}_{L^{\infty}(\R^{N})}\\
&+4|\varphi(x)-\varphi(y)|^{2},
\end{align*}
where we used the following elementary inequalities
\begin{align*}
&|z+w+k|^{2}\leq 4(|z|^{2}+|w|^{2}+|k|^{2}) \quad   \mbox{ for all } z,w,k\in \C, \\
&|\sqrt{|z|^{2}+\delta^{2}}-\sqrt{|w|^{2}+\delta^{2}}|\leq ||z|-|w||  \quad \mbox{ for all } z, w\in \C,
\end{align*}
and that $|e^{\imath t}|=1$ for all $t\in \R$, $u_{\delta}\geq \delta$, $|\frac{u}{u_{\delta}}|\leq 1$. 
Since $u\in \mathcal{D}^{s,2}_{A}(\R^{N}, \C)$, $|u|\in \mathcal{D}^{s,2}(\R^{N}, \R)$ (by Lemma \ref{DI}) and $\varphi\in C^{\infty}_{c}(\R^{N}, \R)$, we deduce that $[\omega_{\delta}]_{A}<\infty$.
Then we have
\begin{align}\label{Kato1NG}
&\Re\left[\iint_{\R^{2N}} \frac{(u(x)-u(y)e^{\imath A(\frac{x+y}{2})\cdot (x-y)})}{|x-y|^{N+2s}} \left(\frac{\overline{u(x)}}{u_{\delta}(x)}\varphi(x)-\frac{\overline{u(y)}}{u_{\delta}(y)}\varphi(y)e^{-\imath A(\frac{x+y}{2})\cdot (x-y)}  \right) dx dy\right] \nonumber\\
&=\Re\left(\int_{\R^{N}} f\frac{\overline{u}}{u_{\delta}}\varphi \,dx\right).
\end{align}
Now, using $\Re(z)\leq |z|$ for all $z\in \C$ and  $|e^{\imath t}|=1$ for all $t\in \R$, we see that
\begin{align}\label{alves1NG}
&\Re\left[(u(x)-u(y)e^{\imath A(\frac{x+y}{2})\cdot (x-y)}) \left(\frac{\overline{u(x)}}{u_{\delta}(x)}\varphi(x)-\frac{\overline{u(y)}}{u_{\delta}(y)}\varphi(y)e^{-\imath A(\frac{x+y}{2})\cdot (x-y)}  \right)\right] \nonumber\\
&=\Re\left[\frac{|u(x)|^{2}}{u_{\delta}(x)}\varphi(x)+\frac{|u(y)|^{2}}{u_{\delta}(y)}\varphi(y)-\frac{u(x)\overline{u(y)}}{u_{\delta}(y)}\varphi(y)e^{-\imath A(\frac{x+y}{2})\cdot (x-y)} -\frac{u(y)\overline{u(x)}}{u_{\delta}(x)}\varphi(x)e^{\imath A(\frac{x+y}{2})\cdot (x-y)}\right] \nonumber \\
&\geq \left[\frac{|u(x)|^{2}}{u_{\delta}(x)}\varphi(x)+\frac{|u(y)|^{2}}{u_{\delta}(y)}\varphi(y)-|u(x)|\frac{|u(y)|}{u_{\delta}(y)}\varphi(y)-|u(y)|\frac{|u(x)|}{u_{\delta}(x)}\varphi(x) \right].
\end{align}
Let us note that
\begin{align}\label{alves2NG}
&\frac{|u(x)|^{2}}{u_{\delta}(x)}\varphi(x)+\frac{|u(y)|^{2}}{u_{\delta}(y)}\varphi(y)-|u(x)|\frac{|u(y)|}{u_{\delta}(y)}\varphi(y)-|u(y)|\frac{|u(x)|}{u_{\delta}(x)}\varphi(x) \nonumber\\
&=  \frac{|u(x)|}{u_{\delta}(x)}(|u(x)|-|u(y)|)\varphi(x)-\frac{|u(y)|}{u_{\delta}(y)}(|u(x)|-|u(y)|)\varphi(y) \nonumber\\
&=\left[\frac{|u(x)|}{u_{\delta}(x)}(|u(x)|-|u(y)|)\varphi(x)-\frac{|u(x)|}{u_{\delta}(x)}(|u(x)|-|u(y)|)\varphi(y)\right] \nonumber\\
&+\left(\frac{|u(x)|}{u_{\delta}(x)}-\frac{|u(y)|}{u_{\delta}(y)} \right) (|u(x)|-|u(y)|)\varphi(y) \nonumber\\
&=\frac{|u(x)|}{u_{\delta}(x)}(|u(x)|-|u(y)|)(\varphi(x)-\varphi(y)) +\left(\frac{|u(x)|}{u_{\delta}(x)}-\frac{|u(y)|}{u_{\delta}(y)} \right) (|u(x)|-|u(y)|)\varphi(y) \nonumber\\
&\geq \frac{|u(x)|}{u_{\delta}(x)}(|u(x)|-|u(y)|)(\varphi(x)-\varphi(y)), 
\end{align}
where in the last inequality we used the fact that
$$
\left(\frac{|u(x)|}{u_{\delta}(x)}-\frac{|u(y)|}{u_{\delta}(y)} \right) (|u(x)|-|u(y)|)\varphi(y)\geq 0
$$
because
$$
h(t)=\frac{t}{\sqrt{t^{2}+\delta^{2}}} \mbox{ is increasing for } t\geq 0 \quad \mbox{ and } \quad \varphi\geq 0 \mbox{ in }\R^{N}.
$$
Since
$$
\frac{|\frac{|u(x)|}{u_{\delta}(x)}(|u(x)|-|u(y)|)(\varphi(x)-\varphi(y))|}{|x-y|^{N+2s}}\leq \frac{||u(x)|-|u(y)||}{|x-y|^{\frac{N+2s}{2}}} \frac{|\varphi(x)-\varphi(y)|}{|x-y|^{\frac{N+2s}{2}}}\in L^{1}(\R^{2N}, \R),
$$
and $\frac{|u(x)|}{u_{\delta}(x)}\rightarrow 1$ a.e. in $\R^{N}$ as $\delta\rightarrow 0$,
we can use \eqref{alves1NG}, \eqref{alves2NG} and the dominated convergence theorem to deduce that
\begin{align}\label{Kato2NG}
&\liminf_{\delta\rightarrow 0} \Re\left[\iint_{\R^{2N}} \frac{(u(x)-u(y)e^{\imath A(\frac{x+y}{2})\cdot (x-y)})}{|x-y|^{N+2s}} \left(\frac{\overline{u(x)}}{u_{\delta}(x)}\varphi(x)-\frac{\overline{u_{n}(y)}}{u_{\delta}(y)}\varphi(y)e^{-\imath A(\frac{x+y}{2})\cdot (x-y)}  \right) \,dx dy\right] \nonumber\\
&\geq \liminf_{\delta\rightarrow 0} \iint_{\R^{2N}} \frac{|u(x)|}{u_{\delta}(x)}\, \frac{(|u(x)|-|u(y)|)(\varphi(x)-\varphi(y))}{|x-y|^{N+2s}} \ dx dy  \nonumber\\
&=\iint_{\R^{2N}} \frac{(|u(x)|-|u(y)|)(\varphi(x)-\varphi(y))}{|x-y|^{N+2s}} \,dx dy.
\end{align}
On the other hand, observing that $|f\frac{\overline{u}}{u_{\delta}}\varphi|\leq |f\varphi|\in L^{1}(\R^{N}, \R)$ and $ f\frac{\overline{u}}{u_{\delta}}\varphi\rightarrow  f {\rm sign}(\bar{u})\varphi$ a.e. in $\R^{N}$ as $\delta\rightarrow 0$, we can invoke the dominated convergence theorem to infer that as $\delta\rightarrow 0$
\begin{align}\label{Kato3NG}
\Re\left(\int_{\R^{N}} f\frac{\overline{u}}{u_{\delta}}\varphi \,dx\right)\rightarrow \Re\left(\int_{\R^{N}} f {\rm sign}(\bar{u}) \varphi \,dx\right).
\end{align}
Putting together \eqref{Kato1NG}, \eqref{Kato2NG} and \eqref{Kato3NG}, we see that \eqref{Kato0NG} holds true.
\end{proof}

\begin{remark}
A pointwise Kato's inequality  for $(-\Delta)^{s}_{A}$ is proved in \cite{Acpde}. In \cite{HIL} the authors established a Kato's inequality for the fractional magnetic operator $((-\imath\nabla-A(x))^{2}+m^{2})^{\frac{\alpha}{2}}$ with $\alpha\in (0, 1]$ and $m> 0$, or $\alpha=1$ and $m=0$, borrowing some arguments used in \cite{Kato}. As observed in \cite{DS}, when $\alpha= 1$ and $m=0$, this operator coincides with $(-\Delta)^{\frac{1}{2}}_{A}$.
\end{remark}

Now we prove the following crucial result.
\begin{lem}\label{moser} 
Let $\e_{n}\rightarrow 0$ and $(u_{n})\subset \widetilde{\mathcal{N}}_{\e_{n}}$ be a sequence of solutions to \eqref{MPe}. 
Then, $v_{n}=|u_{n}|(\cdot+\tilde{y}_{n})$ satisfies $v_{n}\in L^{\infty}(\R^{N},\R)$ and there exists $C>0$ such that 
$$
\|v_{n}\|_{L^{\infty}(\R^{N})}\leq C \quad \mbox{ for all } n\in \mathbb{N},
$$
where $(\tilde{y}_{n})\subset \R^{N}$ is given by Lemma \ref{prop3.3}.
Moreover,
$$
\lim_{|x|\rightarrow \infty} v_{n}(x)=0 \quad \mbox{ uniformly in } n\in \mathbb{N}.
$$
\end{lem}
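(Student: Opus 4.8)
The plan is to combine the fractional Kato inequality of Theorem~\ref{KATOTHMNG} with a uniform Moser iteration and a local boundedness estimate for fractional subsolutions. Since $u_n$ is a weak solution of \eqref{MPe}, it satisfies \eqref{wffNG} with $A=A_{\e_n}$ and right-hand side $f=g_{\e_n}(x,|u_n|^2)u_n-V_{\e_n}(x)u_n\in L^1_{loc}(\R^N,\C)$; applying Theorem~\ref{KATOTHMNG} and using that $\mathrm{sign}(\bar u_n)\,u_n=|u_n|$ we obtain, in the distributional sense,
$$
(-\Delta)^s|u_n|+V_{\e_n}(x)|u_n|\le g_{\e_n}(x,|u_n|^2)|u_n|\quad\mbox{ in }\R^N .
$$
Writing $y_n=\e_n\tilde y_n$ and translating, $v_n=|u_n|(\cdot+\tilde y_n)\ge0$ is a distributional subsolution of
$$
(-\Delta)^s v_n\le g(\e_n x+y_n,v_n^2)\,v_n\le f(v_n^2)\,v_n\quad\mbox{ in }\R^N ,
$$
where we used $V\ge0$ and $(g_2)$. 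Moreover, by the diamagnetic inequality of Lemma~\ref{DI} we have $d_0\le\max_{t\ge0}I_0(t|u_n|)\le\max_{t\ge0}J_{\e_n}(tu_n)=J_{\e_n}(u_n)\le d_0+h(\e_n)$, so $J_{\e_n}(u_n)\to d_0$ and Lemma~\ref{prop3.3} applies, giving $v_n\to v$ strongly in $H^s(\R^N,\R)$.

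First I would prove the uniform $L^\infty$ bound by a Moser iteration on the subsolution inequality above. By \eqref{BERTSCH} we have $g(\e_n x+y_n,v_n^2)v_n\le\xi v_n+C_\xi v_n^{\2-1}$ in $\R^N$; using the test functions $v_n\min\{v_n,L\}^{2(\beta-1)}$ with $L>0$, $\beta\ge1$, the fractional Sobolev inequality to pass from $L^{p}$ to $L^{p^{*}}$, and absorbing the $\xi$-term into the left-hand side, one obtains (exactly as in Proposition~$5.1.1$ of \cite{DMV}, adapted to the penalized nonlinearity) an estimate $\|v_n\|_{L^\infty(\R^N)}\le C\big(1+\|v_n\|_{L^{\2}(\R^N)}\big)^{\sigma}$ with $C,\sigma$ independent of $n$. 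Since $v_n\to v$ in $H^s(\R^N,\R)\hookrightarrow L^{\2}(\R^N,\R)$, the sequence $\|v_n\|_{L^{\2}}$ is bounded, hence $\sup_n\|v_n\|_{L^\infty(\R^N)}<\infty$. In particular $f(v_n^2)v_n\le C'v_n$ in $\R^N$ with $C'$ independent of $n$, so each $v_n\ge0$ satisfies $(-\Delta)^s v_n\le C'v_n$ in $\R^N$.

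For the uniform decay I would invoke the local boundedness estimate for subsolutions of $(-\Delta)^s v_n\le C'v_n$: for every $z\in\R^N$,
$$
\sup_{B_{1/2}(z)}v_n\le C\Big(\|v_n\|_{L^2(B_1(z))}+\int_{B_1(z)^c}\frac{v_n(y)}{|y-z|^{N+2s}}\,dy\Big),
$$
with $C=C(N,s,C')$. The strong convergence $v_n\to v$ in $H^s(\R^N,\R)$ yields $\sup_n\int_{B_R^c}v_n^2\,dx\to0$ as $R\to\infty$ (split $v_n=(v_n-v)+v$, use $\|v_n-v\|_{L^2}\to0$ and the decay of the tails of $v$, treating the finitely many small indices separately). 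Hence $\|v_n\|_{L^2(B_1(z))}\le\|v_n\|_{L^2(B_{|z|-1}^c)}\to0$ uniformly in $n$ as $|z|\to\infty$; for the tail term, on $B_{|z|/2}(0)$ one has $|y-z|\ge|z|/2$, so that part is $\le C|z|^{-(N+2s)}|B_{|z|/2}|^{1/2}\|v_n\|_{L^2(\R^N)}\le C|z|^{-N/2-2s}$, while on $B_{|z|/2}(0)^c$ the Cauchy--Schwarz inequality against $|y-z|^{-(N+2s)}\in L^2(B_1(z)^c)$ gives a bound $\le C\|v_n\|_{L^2(B_{|z|/2}^c)}$, and both tend to $0$ uniformly in $n$ as $|z|\to\infty$. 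Therefore $\sup_{B_{1/2}(z)}v_n\to0$ as $|z|\to\infty$ uniformly in $n$, which is precisely $\lim_{|x|\to\infty}v_n(x)=0$ uniformly in $n\in\mathbb{N}$.

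The delicate point throughout is that every constant must be independent of $n$: this is why the Moser iteration has to be run from the uniformly bounded norm $\|v_n\|_{L^{\2}}$ rather than from a non-uniform $L^\infty$ bound, and why the nonlocal tail term in the local estimate has to be handled through the uniform smallness of the $L^2$-tails $\int_{B_R^c}v_n^2\,dx$ — which, together with the strong $H^s$-convergence of Lemma~\ref{prop3.3}, is the genuine input of the argument. The remaining ingredients, namely the fractional Kato inequality, the fractional Sobolev embedding, and the local boundedness estimate for fractional subsolutions, are either established above or standard.
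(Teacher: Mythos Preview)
Your argument is correct. The uniform $L^{\infty}$ bound is obtained exactly as in the paper: Kato's inequality (Theorem~\ref{KATOTHMNG}) turns the magnetic equation into a scalar subsolution inequality for $v_{n}=|u_{n}|(\cdot+\tilde{y}_{n})$, and a Moser iteration driven by the uniformly bounded $H^{s}$-norm of $v_{n}$ (coming from Lemma~\ref{prop3.3}) yields $\sup_{n}\|v_{n}\|_{L^{\infty}}<\infty$.

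The uniform decay, however, is where your route diverges from the paper's. The paper introduces the auxiliary linear problem $(-\Delta)^{s}z_{n}+V_{0}z_{n}=h_{n}$, writes $z_{n}=\mathcal{K}*h_{n}$ with $\mathcal{K}$ the Bessel-type kernel of $(-\Delta)^{s}+V_{0}$, and exploits the pointwise decay $\mathcal{K}(x)\le C|x|^{-N-2s}$ together with the strong convergence $h_{n}\to h$ in $L^{r}$ (for every $r\in[2,\infty)$) to show $z_{n}(x)\to 0$ uniformly in $n$; comparison then gives $v_{n}\le z_{n}$. You bypass the auxiliary function and the kernel representation entirely, applying instead a nonlocal De~Giorgi local-boundedness estimate for subsolutions of $(-\Delta)^{s}v_{n}\le C'v_{n}$, which bounds $\sup_{B_{1/2}(z)}v_{n}$ by the local $L^{2}$-mass plus the nonlocal tail, and then you control both quantities through the uniform smallness of $\int_{B_{R}^{c}}v_{n}^{2}$ (a direct consequence of the strong $H^{s}$-convergence). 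Your approach is more direct and avoids the kernel machinery, at the cost of invoking the local boundedness estimate with tail as a black box (this is indeed available, but it is worth citing a precise source such as Di~Castro--Kuusi--Palatucci or Kassmann, and noting that the distributional inequality from Kato upgrades to a weak inequality against nonnegative $H^{s}$-test functions by density, since $v_{n}\in H^{s}$). The paper's approach is more explicit and self-contained, and has the additional payoff that the same comparison with a barrier built from the Bessel kernel is reused later to obtain the polynomial decay $|u_{\e}(x)|\le C\e^{N+2s}(\e^{N+2s}+|x-x_{\e}|^{N+2s})^{-1}$ in the proof of Theorem~\ref{thm1}.
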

\begin{proof}
Since $J_{\e_{n}}(u_{n})\leq d_{0}+h(\e_{n})$ with $h(\e_{n})\rightarrow 0$ as $n\rightarrow \infty$, we can argue as at the beginning of the proof of Lemma \ref{prop3.3} to deduce that $J_{\e_{n}}(u_{n})\rightarrow d_{0}$. Thus we may invoke  Lemma \ref{prop3.3} to obtain a sequence $(\tilde{y}_{n})\subset \R^{N}$ such that $\e_{n}\tilde{y}_{n}\rightarrow y_{0}\in M$ and $v_{n}=|u_{n}|(\cdot+\tilde{y}_{n})$ strongly converges in $\mathcal{H}_{0}$. Let $\tilde{u}_{n}(x)=u_{n}(\cdot+\tilde{y}_{n})$ and note that it solves 
\begin{equation}\label{tetrapak}
(-\Delta)^{s}_{\tilde{A}_{n}} \tilde{u}_{n}+\tilde{V}_{n}(x) \tilde{u}_{n}=\tilde{g}_{n}(x, v^{2}_{n}) \tilde{u}_{n} \quad \mbox{ in } \R^{N},
\end{equation}
where
$$
\tilde{A}_{n}(x)=A_{\e_{n}}(x+\tilde{y}_{n}),
$$
$$
\tilde{V}_{n}(x)=V_{\e_{n}}(x+\tilde{y}_{n}),
$$
and
$$
\tilde{g}_{n}(x, v^{2}_{n})=g(\e_{n} x+\e_{n}\tilde{y}_{n}, v_{n}^{2}(x)).
$$
Using Theorem \ref{KATOTHMNG}, we deduce that $v_{n}$ satisfies (in the distributional sense)
\begin{equation}\label{SUB}
(-\Delta)^{s} v_{n}+\tilde{V}_{n}(x) v_{n}\leq \tilde{g}_{n}(x, v^{2}_{n}) v_{n}=h_{n}  \quad \mbox{ in }  \R^{N}.
\end{equation}
By performing a Moser iteration argument \cite{Moser} as in Lemma $5.1$ in \cite{A3}, we obtain that  
\begin{equation}\label{UBu}
\|v_{n}\|_{L^{\infty}(\mathbb{R}^{N})}\leq K \quad \mbox{ for all } n\in \mathbb{N}.
\end{equation}
Moreover, by interpolation, $v_{n}\rightarrow v$ strongly converges  in $L^{r}(\mathbb{R}^{N}, \mathbb{R})$ for all $r\in [2, \infty)$. In view of the growth assumptions on $g$, we can also see that $h_{n}\rightarrow  h=f(v^{2})v$ in $L^{r}(\mathbb{R}^{N}, \mathbb{R})$ for all $r\in [2, \infty)$, and $\|h_{n}\|_{L^{\infty}(\mathbb{R}^{N})}\leq C$ for all $n\in \mathbb{N}$.
Note that, by $(V_{1})$, we have
\begin{equation}\label{Pkat}
(-\Delta)^{s} v_{n} + V_{0}v_{n}\leq h_{n} \quad \mbox{ in } \mathbb{R}^{N}. 
\end{equation}
Let us denote by $z_{n}\in H^{s}(\mathbb{R}^{N}, \mathbb{R})$ the unique solution to
\begin{equation}\label{US}
(-\Delta)^{s} z_{n} + V_{0}z_{n}=h_{n} \quad \mbox{ in } \mathbb{R}^{N}.
\end{equation}
Since $v_{n}$ satisfies \eqref{Pkat} and $z_{n}$ solves \eqref{US}, by comparison we see that $0\leq v_{n}\leq z_{n}$ in $\mathbb{R}^{N}$ for all $n\in \mathbb{N}$. 
Next we show that
\begin{align}\label{ZCONVERGENZA}
z_{n}(x)\ri 0 \,\mbox{ as } |x|\ri \infty \,\mbox{ uniformly in } n\in \mathbb{N}.
\end{align}
Note that $z_{n}(x)=(\mathcal{K}*h_{n})(x)$, where the kernel $\mathcal{K}(x)=\mathcal{F}^{-1}((|k|^{2s}+V_{0})^{-1})$
 satisfies the following properties (see \cite{FQT}):
\begin{compactenum}[$(b_1)$]
\item $\mathcal{K}$ is positive, radially symmetric and smooth in $\R^{N}\setminus \{0\}$;
\item there exists a positive constant $C>0$ such that $\mathcal{K}(x)\leq \frac{K_{1}}{|x|^{N+2s}}$ for any $x\in \R^{N}\setminus \{0\}$;
\item $\mathcal{K}\in L^{\nu}(\R^{N})$ for any $\nu\in [1, \frac{N}{N-2s})$.
\end{compactenum}
Now we borrow some arguments used in \cite{AM} to prove that \eqref{ZCONVERGENZA} holds true.
Fix $\delta>0$ and we observe that
\begin{align}\label{ASPRITE}
0\leq z_{n}(x)=(\mathcal{K}*h_{n})(x)=\int_{B^{c}_{\frac{1}{\delta}}(x)} \mathcal{K}(x-y)h_{n}(y) \, dy+\int_{B_{\frac{1}{\delta}}(x)} \mathcal{K}(x-y)h_{n}(y) \, dy.
\end{align}
From $(b_2)$ we deduce that 
\begin{align}\label{A1SPRITE}
\int_{B^{c}_{\frac{1}{\delta}}(x)} \mathcal{K}(x-y)h_{n}(y) \, dy \leq K_{1}\|h_{n}\|_{L^{\infty}(\R^{N})}\int_{B^{c}_{\frac{1}{\delta}}(x)} \frac{dy}{|x-y|^{N+2s}}\leq c_{1}\delta^{2s} K_{1} \int_{|\xi|\geq 1} \frac{d\xi}{|\xi|^{N+2s}}= C_{1}\delta^{2s}.
\end{align}
On the other hand, 
\begin{align*}
\left|\int_{B_{\frac{1}{\delta}}(x)} \mathcal{K}(x-y)h_{n}(y) \, dy\right|\leq \int_{B_{\frac{1}{\delta}}(x)} \mathcal{K}(x-y)|h_{n}(y)-h(y)| \, dy+
\int_{B_{\frac{1}{\delta}}(x)} \mathcal{K}(x-y)|h(y)| \, dy.
\end{align*}
Fix
$$
\nu\in \left(1, \min\left\{\frac{N}{N-2s}, 2 \right\}\right).
$$ 
Note that, if $N\geq 4s$ then $\nu\in (1, \frac{N}{N-2s})$ and we have $\nu'=\frac{\nu}{\nu-1}>\frac{N}{2s}\geq 2$, and that when $2s<N<4s$ then $2<\frac{N}{N-2s}$, $\nu\in (1, 2)$ and $\nu'\in (2, \infty)$. 
In any case, $\nu\in (1, \frac{N}{N-2s})$ and $\nu'\in (2, \infty)$.
Then, by $(b_{3})$, we deduce that $\mathcal{K}\in L^{\nu}(\R^{N})$. Using H\"older's inequality we have
\begin{align*}
\left|\int_{B_{\frac{1}{\delta}}(x)} \mathcal{K}(x-y)h_{n}(y) \, dy\right|\leq \|\mathcal{K}\|_{L^{\nu}(\R^{N})} \|h_{n}-h\|_{L^{\nu'}(\R^{N})}+\|\mathcal{K}\|_{L^{\nu}(\R^{N})} \|h\|_{L^{\nu'}(B_{\frac{1}{\delta}}(x))}.
\end{align*}
Since $\|h_{n}-h\|_{L^{\nu'}(\R^{N})}\rightarrow 0$ as $n\rightarrow \infty$ and $\|h\|_{L^{\nu'}(B_{\frac{1}{\delta}}(x))}\rightarrow 0$ as $|x|\rightarrow \infty$, we see that  there exist $R>0$ and $n_{0}\in \mathbb{N}$ such that
\begin{align}\label{A2SPRITE}
\int_{B_{\frac{1}{\delta}}(x)} \mathcal{K}(x-y)h_{n}(y) \, dy\leq C_{2}\delta \quad \mbox{ for all } |x|\geq R, \, n\geq n_{0}.
\end{align}
Putting together \eqref{ASPRITE}, \eqref{A1SPRITE} and \eqref{A2SPRITE}, we obtain that
\begin{align}\label{A3SPRITE}
z_{n}(x)=\int_{\R^{N}} \mathcal{K}(x-y)h_{n}(y) \, dy\leq C_{1}\delta^{2s}+C_{2}\delta \quad \mbox{ for all } |x|\geq R, \, n\geq n_{0}.
\end{align}
Now, for each $n\in \{1, \dots, n_{0}-1\}$, there is $R_{n}>0$ such that $\|h_{n}\|_{L^{\nu'}(B_{\frac{1}{\delta}}(x))}<\delta$ for $|x|\geq R_{n}$. Then, for $|x|\geq R_{n}$, we have
\begin{align}\label{A4SPRITE}
\int_{\R^{N}} \mathcal{K}(x-y)h_{n}(y) \, dy&\leq C_{1} \delta^{2s}+\int_{B_{\frac{1}{\delta}}(x)} \mathcal{K}(x-y)h_{n}(y) \, dy\nonumber \\
&\leq C_{1} \delta^{2s}+\|\mathcal{K}\|_{L^{\nu}(\R^{N})} \|h_{n}\|_{L^{\nu'}(B_{\frac{1}{\delta}}(x))} \nonumber \\
&\leq C_{1}\delta^{2s}+C_{3}\delta.
\end{align}
Set $\bar{R}=\max\{R_{1}, \dots, R_{n_{0}-1}, R\}$. By \eqref{A3SPRITE} and \eqref{A4SPRITE} we find
\begin{align*}
z_{n}(x)\leq C_{4}(\delta^{2s}+\delta) \quad \mbox{ for all } |x|\geq \bar{R}, \, n\in \mathbb{N}.
\end{align*}
From the arbitrariness of $\delta>0$, we deduce that \eqref{ZCONVERGENZA} holds true. 
Consequently, combining \eqref{ZCONVERGENZA} with $0\leq v_{n}\leq z_{n}$ in $\R^{N}$, we obtain that 
 $v_{n}(x)\rightarrow 0$ as $|x|\rightarrow \infty$ uniformly with respect to $n\in \mathbb{N}$.
\end{proof}

\noindent
Now, we are ready to give the proof of Theorem \ref{thm1}.
\begin{proof}[Proof of Theorem \ref{thm1}]
Let $\delta>0$ be such that $M_{\delta}\subset \Lambda$. First, we claim that there exists  $\hat{\e}_{\delta}>0$ such that for any $\e\in (0, \hat{\e}_{\delta})$ and any solution $u\in \widetilde{\mathcal{N}}_{\e}$ of \eqref{MPe}, it holds
\begin{equation}\label{Ua}
\|u\|_{L^{\infty}(\Lambda^{c}_{\e})}<\sqrt{a}.
\end{equation}
We argue by contradiction and assume that for some subsequence $\e_{n}\rightarrow 0$ we can obtain $u_{n}=u_{\e_{n}}\in \widetilde{\mathcal{N}}_{\e_{n}}$ such that $J'_{\e}(u_{n})=0$ and
\begin{equation}\label{AbsAFF}
\|u_{n}\|_{L^{\infty}(\Lambda^{c}_{\e})}\geq \sqrt{a}.
\end{equation}
Since $J_{\e_{n}}(u_{n})\leq d_{0}+h(\e_{n})$, we can argue as in the first part of Lemma \ref{prop3.3} to deduce that $J_{\e_{n}}(u_{n})\rightarrow d_{0}$.
In view of Lemma \ref{prop3.3}, there exists $(\tilde{y}_{n})\subset \R^{N}$ such that $\e_{n}\tilde{y}_{n}\rightarrow y_{0}$ for some $y_{0} \in M$ and $v_{n}=|u_{n}|(\cdot+\tilde{y}_{n})$ strongly converges in $H^{s}(\R^{N}, \R)$. 


Take $r>0$ such that, for some subsequence still denoted by itself, it holds $B_{r}(\e_{n}\tilde{y}_{n})\subset \Lambda$ for all $n\in \mathbb{N}$.
Hence $B_{\frac{r}{\e_{n}}}(\tilde{y}_{n})\subset \Lambda_{\e_{n}}$ for all  $n\in \mathbb{N}$, and consequently 
$\Lambda^{c}_{\e_{n}}\subset  B^{c}_{\frac{r}{\e_{n}}}(\tilde{y}_{n})$ for all $n\in \mathbb{N}$. 
By Lemma \ref{moser}, we can find $R>0$ such that 
$$
v_{n}(x)<\sqrt{a} \quad \mbox{ for all } |x|\geq R, \,n\in \mathbb{N},
$$ 
from which we deduce that $|u_{n}(x)|<\sqrt{a}$ for any $x\in B^{c}_{R}(\tilde{y}_{n})$ and $n\in \mathbb{N}$. On the other hand, there exists $\nu \in \mathbb{N}$ such that for any $n\geq \nu$ it holds 
$$
\Lambda^{c}_{\e_{n}}\subset B^{c}_{\frac{r}{\e_{n}}}(\tilde{y}_{n})\subset B^{c}_{R}(\tilde{y}_{n}).
$$ 
Therefore, $|u_{n}(x)|<\sqrt{a}$ for any $x\in\Lambda^{c}_{\e_{n}}$ and $n\geq \nu$, and this contradicts \eqref{AbsAFF}.

Let $\tilde{\e}_{\delta}>0$ be given by Theorem \ref{multiple} and we set $\e_{\delta}=\min\{\tilde{\e}_{\delta}, \hat{\e}_{\delta} \}$. Let us fix $\e\in (0, \e_{\delta})$. Applying Theorem \ref{multiple} we obtain at least $cat_{M_{\delta}}(M)$ nontrivial solutions to \eqref{MPe}.
If $u_{\e}\in \h$ is one of these solutions, then $u_{\e}\in \widetilde{\mathcal{N}}_{\e}$, and in view of \eqref{Ua} and the definition of $g$, we  infer that $u_{\e}$ is also a solution to \eqref{R}. Since $\hat{u}_{\e}(x)=u_{\e}(x/\e)$ is a solution to (\ref{P}), we deduce that \eqref{P} has at least $cat_{M_{\delta}}(M)$ nontrivial solutions.
Finally, we investigate the behavior of the maximum points of  $|\hat{u}_{\e}|$. Take $\e_{n}\rightarrow 0$ and let $(u_{n})\subset \mathcal{H}_{\e_{n}}$ be a sequence of solutions to \eqref{MPe} as above. From $(g_1)$, there exists $\gamma\in (0, \sqrt{a})$ such that
\begin{align}\label{4.18HZ}
g_{\e} (x, t^{2})t^{2}\leq \frac{V_{0}}{2}t^{2} \quad \mbox{ for all } x\in \R^{N}, |t|\leq \gamma.
\end{align}
Arguing as above, we can find $R>0$ such that
\begin{align}\label{4.19HZ}
\|u_{n}\|_{L^{\infty}(B^{c}_{R}(\tilde{y}_{n}))}<\gamma.
\end{align}
Up to a subsequence, we may also assume that
\begin{align}\label{4.20HZ}
\|u_{n}\|_{L^{\infty}(B_{R}(\tilde{y}_{n}))}\geq \gamma.
\end{align}
Indeed, if \eqref{4.20HZ} does not hold, we get $\|u_{n}\|_{L^{\infty}(\R^{N})}< \gamma$, and using $J_{\e_{n}}'(u_{n})=0$, \eqref{4.18HZ} and Lemma \ref{DI}, we deduce that 
\begin{align*}
[|u_{n}|]^{2}+\int_{\R^{N}}V_{0}|u_{n}|^{2}dx&\leq \|u_{n}\|^{2}_{\e_{n}}=\int_{\R^{N}} g_{\e_{n}}(x, |u_{n}|^{2})|u_{n}|^{2}\,dx\leq \frac{V_{0}}{2}\int_{\R^{N}}|u_{n}|^{2}\, dx.
\end{align*}
This fact yields $\||u_{n}|\|_{0}=0$, which is impossible. Hence, \eqref{4.20HZ} is satisfied.

In light of \eqref{4.19HZ} and \eqref{4.20HZ}, we can see that if $p_{n}$ is a global maximum point of $|u_{n}|$, then  $p_{n}$ belongs to $B_{R}(\tilde{y}_{n})$, that is $p_{n}=\tilde{y}_{n}+q_{n}$ for some $q_{n}\in B_{R}$. Since $\hat{u}_{n}(x)=u_{n}(x/\e_{n})$ is solution of \eqref{P}, we infer that $\eta_{n}=\e_{n}\tilde{y}_{n}+\e_{n}q_{n}$ is a global maximum point of $|\hat{u}_{n}|$. Using $(q_{n})\subset B_{R}$, $\e_{n}\tilde{y}_{n}\rightarrow y_{0}\in M$, and the continuity of $V$, we deduce that
$$
\lim_{n\rightarrow \infty} V(\eta_{n})=V(y_{0})=V_{0}.
$$
Finally, we provide a decay estimate for $|\hat{u}_{n}|$.
By using Lemma $4.3$ in \cite{FQT} there exists a continuous function $w$ such that
\begin{align}\label{HZ1}
0<w(x)\leq \frac{C}{1+|x|^{N+2s}} \quad \mbox{ for all } x\in \R^{N},
\end{align}
and satisfying in the classical sense
\begin{align}\label{HZ2}
(-\Delta)^{s} w+\frac{V_{0}}{2}w= 0  \quad \mbox{ in } \mathbb{R}^{N}\setminus \overline{B}_{R_{1}}, 
\end{align}
for some suitable $R_{1}>0$. By Lemma \ref{moser}, we can find $R_{2}>0$ such that
\begin{equation}\label{hzero}
h_{n}=g(\e_{n}\cdot+\e_{n}\tilde{y}_{n}, v_{n}^{2})v_{n}\leq \frac{V_{0}}{2}v_{n}  \quad \mbox{ in } \mathbb{R}^{N}\setminus \overline{B}_{R_{2}}.
\end{equation}
Let us denote by $w_{n}$ the unique solution to 
$$
(-\Delta)^{s}w_{n}+V_{0}w_{n}=h_{n}  \quad \mbox{ in } \mathbb{R}^{N}.
$$
Then, arguing as in the proof of \eqref{ZCONVERGENZA}, we see that $w_{n}(x)\rightarrow 0$ as $|x|\rightarrow \infty$ uniformly in $n\in \mathbb{N}$. Thus, by comparison, we obtain that $0\leq v_{n}\leq w_{n}$ in $\mathbb{R}^{N}$. Moreover, in light of \eqref{hzero}, it holds
\begin{align*}
(-\Delta)^{s}w_{n}+\frac{V_{0}}{2}w_{n}=h_{n}-\frac{V_{0}}{2}w_{n}\leq 0 \quad \mbox{ in } \mathbb{R}^{N}\setminus \overline{B}_{R_{2}}.
\end{align*}
Take $R_{3}=\max\{R_{1}, R_{2}\}$ and we set 
\begin{align}\label{HZ4}
c=\min_{\overline{B}_{R_{3}}} w>0 \quad  \mbox{ and } \quad \tilde{w}_{n}=(b+1)w-c w_{n},
\end{align}
where $b=\sup_{n\in \mathbb{N}} \|w_{n}\|_{L^{\infty}(\mathbb{R}^{N})}<\infty$. 
Our aim is to prove that 
\begin{equation}\label{HZ5}
\tilde{w}_{n}\geq 0 \quad \mbox{ in } \mathbb{R}^{N}.
\end{equation}
First, we observe that
\begin{align*}
&\tilde{w}_{n}\geq bc+w-bc>0  \quad\mbox{ in } \overline{B}_{R_{3}}, \\
&(-\Delta)^{s} \tilde{w}_{n}+\frac{V_{0}}{2}\tilde{w}_{n}\geq 0 \quad \mbox{ in } \mathbb{R}^{N}\setminus \overline{B}_{R_{3}}. 
\end{align*}
Then we can apply a comparison principle (see Theorem $7.1$ in \cite{DPQNA}) to deduce that \eqref{HZ5} holds true.
From (\ref{HZ1}), (\ref{HZ5}) and $0\leq v_{n}\leq w_{n}$ in $\R^{N}$, we get
\begin{align*}
0\leq v_{n}(x)\leq w_{n}(x)\leq \frac{(b+1)}{c}w(x)\leq \frac{\tilde{C}}{1+|x|^{N+2s}} \quad \mbox{ for all } x\in \mathbb{R}^{N}, n\in \mathbb{N}, 
\end{align*}
for some constant $\tilde{C}>0$. Therefore, recalling the definition of $v_{n}$, we infer that  
\begin{align*}
|\hat{u}_{n}|(x)&=|u_{n}|\left(\frac{x}{\e_{n}}\right)=v_{n}\left(\frac{x}{\e_{n}}-\tilde{y}_{n}\right) \\
&\leq \frac{\tilde{C}}{1+|\frac{x}{\e_{n}}-\tilde{y}_{n}|^{N+2s}} \\
&=\frac{\tilde{C} \e_{n}^{N+2s}}{\e_{n}^{N+2s}+|x- \e_{n} \tilde{y}_{n}|^{N+2s}} \\
&\leq \frac{\tilde{C} \e_{n}^{N+2s}}{\e_{n}^{N+2s}+|x-\eta_{n}|^{N+2s}} \quad \mbox{ for all } x\in \R^{N}.
\end{align*}
This ends the proof of Theorem \ref{thm1}.
\end{proof}


\begin{remark}
The approach used in this paper can be easily extended to deal with fractional Kirchhoff problems \cite{Adcds}, fractional Choquard equations \cite{Adypde} and fractional Schr\"odinger-Poisson systems \cite{Apisa}.
\end{remark}

\noindent


\begin{thebibliography}{777}


\bibitem{AF}	
C.O. Alves and G.M. Figueiredo
{\it Multiple solutions for a semilinear elliptic equation with critical growth and magnetic field}, 
Milan J. Math. {\bf 82} (2014), no. 2, 389--405.

\bibitem{AFF}	
C.O. Alves, G.M. Figueiredo and M.F. Furtado, 
{\it Multiple solutions for a nonlinear Schr\"odinger equation with magnetic fields},
Comm. Partial Differential Equations {\bf 36} (2011), 1565--1586.


\bibitem{AM}
C.O. Alves and O.H. Miyagaki,
{\it Existence and concentration of solution for a class of fractional elliptic equation in $\R^{N}$ via penalization method},
Calc. Var. Partial Differential Equations {\bf 55} (2016), art. 47, 19 pp.



\bibitem{AR}
A. Ambrosetti and P.H. Rabinowitz,
{\it Dual variational methods in critical point theory and applications}, 
J. Funct. Anal. {\bf 14} (1973), 349--381.




\bibitem{A1}
V. Ambrosio, 
{\it Multiplicity of positive solutions for a class of fractional Schr\"odinger equations via penalization method}, 
Ann. Mat. Pura Appl. (4) {\bf 196} (2017), no. 6, 2043--2062.







 
\bibitem{A3}
V. Ambrosio, 
{\it Concentrating solutions for a class of nonlinear fractional Schr\"odinger equations in $\R^{N}$},   Rev. Mat. Iberoam., {\bf 35} (2019), 1367--1414.

\bibitem{Acpde}
V. Ambrosio,
{\it Existence and concentration results for some fractional Schr\"odinger equations in $\R^{N}$ with magnetic fields}, 
Comm. Partial Differential Equations {\bf 44} (2019), no. 8, 637--680. 
 

\bibitem{Ana}
V. Ambrosio, 
{\it On a fractional magnetic Schr\"odinger equation in $\R$ with exponential critical growth}, 
Nonlinear Anal. {\bf 183} (2019), 117--148.

\bibitem{Adypde}
V. Ambrosio, 
{\it Concentration phenomena for a fractional Choquard equation with magnetic field}, 
Dyn. Partial Differ. Equ. {\bf 16} (2019), no. 2, 125--149.


\bibitem{Adcds}
V. Ambrosio, 
{\it Multiple concentrating solutions for a fractional Kirchhoff equation with magnetic fields}, 
Discrete Contin. Dyn. Syst. {\bf 40} (2020), no. 2, 781--815.

\bibitem{Apisa}
V. Ambrosio, 
{\it Existence and concentration of nontrivial solutions for a fractional magnetic Schr\"odinger-Poisson type equation}, Ann. Sc. Norm. Super. Pisa Cl. Sci. (5) Vol. XXI (2020), 1023--1061.

\bibitem{ANA2020}
V. Ambrosio,
{\it A local mountain pass approach for a class of fractional NLS equations with magnetic fields}, 
Nonlinear Anal. {\bf 190} (2020), 111622, 14 pp.


\bibitem{AD}
V. Ambrosio and P. d'Avenia, 
{\it Nonlinear fractional magnetic Schr\"odinger equation: existence and multiplicity}, 
J. Differential Equations {\bf 264} (2018), no. 5, 3336--3368.



 


\bibitem{AS}
G. Arioli and A. Szulkin,
{\it A semilinear Schr\"odinger equation in the presence of a magnetic field},
Arch. Ration. Mech. Anal. {\bf 170} (2003), 277--295.


\bibitem{AHS}
J. Avron, I. Herbst and B. Simon, 
{\it Schr\"odinger operators with magnetic fields. I. General interactions}, 
Duke Math. J. {\bf 45} (1978), no. 4, 847--883. 

\bibitem{BC}
V. Benci and G. Cerami, 
{\it Multiple positive solutions of some elliptic problems via the Morse theory and the domain topology}, 
Calc. Var. Partial Differential Equations {\bf 2} (1994), 29--48.






\bibitem{BucurV}
C. Bucur, E. Valdinoci, 
{\it Nonlocal diffusion and applications}, Lecture Notes of the Unione Matematica Italiana, 20. Springer, [Cham]; Unione Matematica Italiana, Bologna, 2016. xii+155 pp. 



\bibitem{ChS}
J. Chabrowski and A. Szulkin, 
{\it On the Schr\"odinger equation involving a critical Sobolev exponent and magnetic field}, 
Topol. Methods Nonlinear Anal. {\bf 25} (2005), no. 1, 3--21. 








\bibitem{CS}
S. Cingolani and S. Secchi, 
{\it Semiclassical limit for nonlinear Schr\"odinger equations with electromagnetic fileds}, 
J. Math. Anal. Appl., {\bf 275} (2002), 108--130.






\bibitem{DDPW}
J. D\'avila, M. del Pino and J. Wei, 
{\it Concentrating standing waves for the fractional nonlinear Schr\"odinger equation}, 
J. Differential Equations {\bf 256} (2014), no. 2, 858--892.

\bibitem{DS}
P. d'Avenia and M. Squassina,
{\it Ground states for fractional magnetic operators},
ESAIM Control Optim. Calc. Var. {\bf 24} (2018), no. 1, 1--24.

\bibitem{DPQJDE}
L. M. Del Pezzo and A. Quaas,
{\it A Hopf's lemma and a strong minimum principle for the fractional $p$-Laplacian}, 
J. Differential Equations {\bf 263} (2017), no. 1, 765--778. 

\bibitem{DPQNA}
L. M. Del Pezzo and A. Quaas,
{\it Spectrum of the fractional $p$-Laplacian in $\R^{N}$ and decay estimate for positive solutions of a Schr\"odinger equation}, 
Nonlinear Anal. {\bf 193} (2020), 111479.

\bibitem{DF}
M. del Pino and P. L. Felmer, 
{\it Local mountain passes for semilinear elliptic problems in unbounded domains},
Calc. Var. Partial Differential Equations, {\bf 4} (1996), 121--137.

\bibitem{DPV}
E. Di Nezza, G. Palatucci and E. Valdinoci, 
{\it Hitchhiker's guide to the fractional Sobolev spaces}, 
Bull. Sci. math. {\bf 136} (2012), 521--573.


\bibitem{DMV}
S. Dipierro, M. Medina and E. Valdinoci, 
{\it Fractional elliptic problems with critical growth in the whole of $\R^{n}$},  
Appunti. Scuola Normale Superiore di Pisa (Nuova Serie) [Lecture Notes. Scuola Normale Superiore di Pisa (New Series)], 15. Edizioni della Normale, Pisa, 2017. viii+152 pp.


\bibitem{Ek} 
I. Ekeland,
{\it On the variational principle},
J. Math. Anal. Appl., {\bf47} (1974), 324--353.

\bibitem{EL}
M. Esteban and P.L. Lions,
{\it Stationary solutions of nonlinear Schr\"odinger equations with an external magnetic field}, 
Partial differential equations and the calculus of variations, Vol. I, 401--449, Progr. Nonlinear Differential Equations Appl., 1, Birkh\"auser Boston, Boston, MA, 1989.



\bibitem{FMV}
M. M. Fall, F. Mahmoudi and E. Valdinoci, 
{\it Ground states and concentration phenomena for the fractional Schr\"odinger equation},
Nonlinearity {\bf 28} (2015), no. 6, 1937--1961.






\bibitem{FQT}
P. Felmer, A. Quaas and J.Tan,
{\it Positive solutions of the nonlinear {S}chr{\"o}dinger equation with the fractional Laplacian},
Proc. Roy. Soc. Edinburgh Sect. A {\bf 142} (2012), 1237--1262.

\bibitem{FJ}
G.M. Figueiredo and J.R. Santos,
{\it Multiplicity and concentration behavior of positive solutions for a Schr\"odinger-Kirchhoff type problem via penalization method}, ESAIM Control Optim. Calc. Var. {\bf 20} (2014), no. 2, 389--415.

\bibitem{FigS}
G. M. Figueiredo and G. Siciliano, 
{\it A multiplicity result via Ljusternick-Schnirelmann category and Morse theory for a fractional Schr\"odinger equation in $\R^N$}, NoDEA Nonlinear Differential Equations Appl. {\bf 23} (2016), no. 2, Art. 12, 22 pp. 


\bibitem{FPV}
A. Fiscella, A. Pinamonti and E. Vecchi,
{\it Multiplicity results for magnetic fractional problems},
J. Differential Equations {\bf 263} (2017), 4617--4633. 








\bibitem{HIL}
F. Hiroshima, T. Ichinose and J. L\"orinczi,
{\it Kato's Inequality for Magnetic Relativistic Schr\"odinger Operators},
Publ. Res. Inst. Math. Sci. {\bf 53} (2017), no. 1, 79--117.

\bibitem{I10}
T. Ichinose,
{\it Magnetic relativistic Schr\"odinger operators and imaginary-time path integrals},
Mathematical physics, spectral theory and stochastic analysis, 247--297, Oper. Theory Adv. Appl. {\bf 232}, Birkh\"auser/Springer Basel AG, Basel, 2013.




\bibitem{Kato}
T. Kato,
{\it Schr\"odinger operators with singular potentials}, 
Proceedings of the International Symposium on Partial Differential Equations and the Geometry of Normed Linear Spaces (Jerusalem, 1972), Israel J. Math. {\bf 13}, 135--148 (1973).



\bibitem{K}
K. Kurata,
{\it Existence and semi-classical limit of the least energy solution to a nonlinear Schr\"odinger equation with electromagnetic fields}, Nonlinear Anal. {\bf 41} (2000), 763--778.


\bibitem{Laskin}
N. Laskin, 
{\it Fractional quantum mechanics and L\'evy path integrals}, Phys. Lett. A {\bf 268} (2000), no. 4-6, 298--305.

 
 



\bibitem{MRZ}
X. Mingqi, V. D. R\u{a}dulescu and B. Zhang, 
{\it A critical fractional Choquard-Kirchhoff problem with magnetic field}, 
Commun. Contemp. Math. {\bf 21} (2019), no. 4, 1850004, 36 pp.




\bibitem{Moser}
J. Moser,
{\it A new proof of De Giorgi's theorem concerning the regularity problem for elliptic differential equations},
Comm. Pure Appl. Math. {\bf 13} (1960), 457--468.




\bibitem{NPSV}
H. M. Nguyen, A. Pinamonti, M. Squassina and E. Vecchi,
{\it New characterizations of magnetic Sobolev spaces},
Adv. Nonlinear Anal. {\bf 7} (2018), no. 2, 227--245.
 

\bibitem{PP}
G. Palatucci and A. Pisante,
{\it Improved Sobolev embeddings, profile decomposition, and concentration-compactness for fractional Sobolev spaces}, 
Calc. Var. Partial Differential Equations {\bf 50} (2014), 799--829.








\bibitem{Rab}
P. H. Rabinowitz,
{\it On a class of nonlinear Schr\"odinger equations}
Z. Angew. Math. Phys. {\bf 43} (1992), 270--291.






\bibitem{Silvestre}
L. Silvestre,
{\it Regularity of the obstacle problem for a fractional power of the Laplace operator},
Comm. Pure Appl. Math., {\bf 60} (2007), no. 1, 67--112.

\bibitem{SV}
M. Squassina and B. Volzone, 
{\it Bourgain-Brezis-Mironescu formula for magnetic operators},
C. R. Math. {\bf 354}, 825--831 (2016).

\bibitem{SS}
C. Sulem and P.-L. Sulem, 
{\it The nonlinear Schr\"odinger equation. Self-focusing and wave collapse}, Applied Mathematical Sciences, 139. Springer-Verlag, New York, 1999. xvi+350 pp. 

\bibitem{SW} 
A. Szulkin and T. Weth,
{\it The method of Nehari manifold},
Handbook of Nonconvex Analysis and Applications,  597--632,  Int. Press, Somerville, MA, 2010.





\bibitem{W}
M. Willem,
{\it Minimax theorems},
Progress in Nonlinear Differential Equations and their Applications 24, Birkh\"auser Boston, Inc., Boston, MA, 1996.





\end{thebibliography}
\end{document}